\newtheorem{thm}[subsection]{Theorem}
\newtheorem{defn}[subsection]{Definition}
\newtheorem{prop}[subsection]{Proposition}
\newtheorem{cor}[subsection]{Corollary}
\newtheorem{lemma}[subsection]{Lemma}
\newtheorem{remark}[subsection]{Remark}
\theoremstyle{definition}
\numberwithin{equation}{section}
\def\bpartial{{\bar\partial}}
\def\cE{\mathcal E}
\def\cF{\mathcal F}
\def\cI{\mathcal I}
\def\cJ{\mathcal J}
\def\cM{\mathcal M}
\def\cN{\mathcal N}
\def\cS{\mathcal S}
\def\cT{\mathcal T}
\def\cW{\mathcal W}
\def\R{\mathfrak R}
\def\A{\mathfrak A}
\def\B{\mathfrak B}
\def\Zplus{\mathbb Z_{\geq 0}}
\DeclareMathOperator{\sign}{sign}
\DeclareMathOperator{\Hom}{Hom}
\DeclareMathOperator{\Spec}{Spec}
\newfont{\german}{eufm10}
\begin{document}
\pagestyle{plain}

\title{standard monomials and invariant theory for arc spaces III: special linear group}

\author{Andrew R. Linshaw}
\address{Department of Mathematics, University of Denver}
\email{andrew.linshaw@du.edu}
\thanks{A. Linshaw is supported by Simons Foundation Grant \#635650 and NSF Grant DMS-2001484.}

\author{Bailin Song}
\address{School of Mathematical Sciences, University of Science and Technology of China, Hefei, Anhui 230026, P.R. China}
\email{bailinso@ustc.edu.cn}
\thanks{B. Song is supported  by National Natural Science Foundation of China Grant \#12171447.}

\begin{abstract}
This is the third in a series of papers on standard monomial theory and invariant theory of arc spaces. For any algebraically closed field $K$, we prove the arc space analogue of the first and second fundamental theorems of invariant theory for the special linear group. This is more subtle than the results for the general linear and symplectic groups obtained in the first two papers because the arc space of the corresponding affine quotients can be nonreduced.
\end{abstract}

\keywords{standard monomial; invariant theory; arc space}

\maketitle
\section{Introduction}

Let $K$ be an algebraically closed field, $G$ be an algebraic group over $K$, and $W$ be a finite-dimensional $G$-module over $K$.  A basic problem of classical invariant theory is to describe the ring of invariant polynomial functions $K[W]^G$. One also considers $K[V]^G$, where $V=W^{\oplus p}\bigoplus {W^*}^{\oplus q}$ is the direct sum of $p$ copies of $W$ and $q$ copies of the dual $G$-module $W^*$. According to Weyl's terminology \cite{W}, a first fundamental theorem of invariant theory (FFT) for the pair $(G,W)$ is a generating set for $K[V]^G$, and a second fundamental theorem (SFT) for $(G,W)$ is a generating set for the ideal of relations among these generators. 

In this paper, $G$ will be the special linear group $SL_h(K)$ over $K$, and $W=K^{\oplus h}$ will be its standard module. Then $V=W^{\oplus p}\bigoplus {W^*}^{\oplus q}$ has affine coordinate ring
$$K[V]=K[a^{(0)}_{il},b^{(0)}_{jl}|\ 0\leq i\leq p,\ 1\leq j\leq p, \ 1\leq l\leq h].$$

\begin{thm} (FFT and SFT for $SL_h(K)$ and $W = K^{\oplus h}$) \label{main:classical}
The invariant ring $K[V]^{SL_h(K)}$ is generated by
\begin{equation} \label{classgenerators} \begin{split} & X^{(0)}_{ij}=\sum_{l=1}^h a^{(0)}_{il}b^{(0)}_{jl},\quad \quad  1\leq i\leq p, \qquad 1\leq j\leq q; 
\\ & Y^{(0)}_{u_1,\dots,u_h}=
\left|
\begin{array}{cccc}
a_{u_1 1}^{(0)} & a_{u_1 2}^{(0)} & \cdots &a_{u_1 h}^{(0)}\\
a_{u_2 1}^{(0)}& a_{u_2 2}^{(0)} & \cdots & a_{u_2 h}^{(0)} \\
\vdots & \vdots & \vdots & \vdots \\
a_{u_h 1}^{(0)} & a_{u_h 2}^{(0)} & \cdots & a_{u_h h}^{(0)} \\
\end{array}
\right|,\quad\quad 
Z^{(0)}_{v_1,\dots,v_h}=\left|
\begin{array}{cccc}
b_{v_1 1}^{(0)} & b_{v_1 2}^{(0)} & \cdots &b_{v_1 h}^{(0)}\\
b_{v_2 1}^{(0)}& b_{v_2 2}^{(0)} & \cdots & b_{v_2 h}^{(0)} \\
\vdots & \vdots & \vdots & \vdots \\
b_{v_h 1}^{(0)} & b_{v_h 2}^{(0)} & \cdots & b_{v_h h}^{(0)} \\
\end{array}
\right|, 
\\ & 1\leq u_1<u_2<\cdots<u_h\leq p,\quad \quad \quad 1\leq v_1<v_2<\cdots<v_h\leq q.
\end{split}
\end{equation}
 Let
$Y^{(0)}_{u,u,u_3,\dots, u_h}=0=Z^{(0)}_{v,v,v_3\dots, v_h}.$ For a permutation $\sigma$ of $1,2,\dots,h$, let
$$Y^{(0)}_{u_{\sigma(1)},\dots,u_{\sigma(h)}}=\sign(\sigma)Y^{(0)}_{u_1,\dots, u_h},\quad Z^{(0)}_{v_{\sigma(1)},\dots,v_{\sigma(h)}}=\sign(\sigma)Z^{(0)}_{v_1,\dots, v_h};$$

 The ideal of relations among the generators is $I_h^+$ generated by 
\begin{equation}\label{eqn:relation0}
 \left|
 \begin{array}{cccc}
 X^{(0)}_{u_1v_1} & X^{(0)}_{u_1v_2} & \cdots &X^{(0)}_{u_1v_h}\\
 X^{(0)}_{u_2v_1}& X^{(0)}_{u_2v_2} & \cdots & X^{(0)}_{u_2v_h} \\
 \vdots & \vdots & \vdots & \vdots \\
 X^{(0)}_{u_hv_1} & X^{(0)}_{u_hv_2} & \cdots & X^{(0)}_{u_hv_h} \\
 \end{array}
 \right|-Y^{(0)}_{u_1,\dots,u_h} Z^{(0)}_{v_1,\dots,v_h},
 \end{equation}
 \begin{equation}\label{eqn:relation1}
 \sum_{i=0}^h (-1)^i
 X^{(0)}_{u_i v}Y^{(0)}_{u_0,\dots,u_{i-1},u_{i+1},\dots,u_h},
 \quad 
 \sum_{j=0}^h (-1)^j
 X^{(0)}_{u v_j}Z^{(0)}_{v_0,\dots,v_{j-1},v_{j+1},\dots,v_h},
 \end{equation}
 \begin{equation}\label{eqn:relation2}
\sum_{i=1}^h(-1)^i
 Y^{(0)}_{u'_{h},\dots,u'_{2},u_i} Y^{(0)}_{u_{1},\dots,u_{i-1},u'_1,u_{i+1},\dots,u_{h}},
 \quad
 \sum_{j=1}^h(-1)^j
 Z^{(0)}_{v'_{h},\dots,v'_{2},v_j} Z^{(0)}_{v_{1},\dots,v_{j-1},v'_1, v_{j+1},\dots,v_{h}}.
 \end{equation}
 \end{thm}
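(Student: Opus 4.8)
The plan is to present the invariant ring by explicit generators and relations and then to verify the presentation by exhibiting a standard monomial basis. Write $A=(a^{(0)}_{il})$ and $B=(b^{(0)}_{jl})$ for the $p\times h$ and $q\times h$ coordinate matrices; then $X^{(0)}_{ij}$ is the $(i,j)$ entry of $AB^{t}$, while $Y^{(0)}_{u_1,\dots,u_h}$ and $Z^{(0)}_{v_1,\dots,v_h}$ are the maximal minors of $A$ and of $B$. Let $P=K[X_{ij},Y_U,Z_V]$ be the polynomial ring on abstract variables indexed as in the statement, with the sign and vanishing conventions for $Y$ and $Z$ built in, and let $J\subset P$ be the ideal generated by the expressions \eqref{eqn:relation0}, \eqref{eqn:relation1}, \eqref{eqn:relation2}. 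First I would check that these relations genuinely hold for the images, so that $X_{ij}\mapsto X^{(0)}_{ij}$, $Y_U\mapsto Y^{(0)}_U$, $Z_V\mapsto Z^{(0)}_V$ defines a $K$-algebra map $\phi\colon P/J\to {B^K}^{SL_h(K)}$: relation \eqref{eqn:relation0} is the identity $\det(A_UB_V^{t})=\det(A_U)\det(B_V)$; relation \eqref{eqn:relation1} is the Laplace expansion of an $(h+1)\times(h+1)$ determinant one of whose columns is an explicit linear combination of the remaining ones, hence vanishes; and relation \eqref{eqn:relation2} is the Grassmann--Plücker relation among the maximal minors of $A$ (resp.\ $B$). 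The theorem is then exactly the assertion that $\phi$ is an isomorphism, and I note that everything is homogeneous for the bigrading in which $X$ has degree $(1,1)$, $Y$ has degree $(h,0)$ and $Z$ has degree $(0,h)$.

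Surjectivity of $\phi$ is the first fundamental theorem. Over an arbitrary field this is classical and characteristic-free, and I would either cite it or obtain it as a by-product of the basis argument below. The cleanest reduction is to the FFT for $GL_h(K)$, namely ${B^K}^{GL_h(K)}=K[X^{(0)}_{ij}]$: since $SL_h(K)$ is normal in $GL_h(K)$ with quotient the torus $\mathbb{G}_m$ acting through $\det$, and tori are linearly reductive in every characteristic, we get a splitting ${B^K}^{SL_h(K)}=\bigoplus_{m\in\mathbb{Z}}R_{(m)}$ with $R_{(m)}$ the space of $\det^{m}$-semi-invariants and $R_{(0)}={B^K}^{GL_h(K)}$. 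Here $Y^{(0)}$ has $\det$-weight $+1$ and $Z^{(0)}$ weight $-1$; that the $Y^{(0)}$- and $Z^{(0)}$-monomials together with $K[X^{(0)}]$ fill up every graded piece $R_{(m)}$ is the content of the $SL_h$-FFT, and it also drops out of the spanning statement below once the standard monomials are known to exhaust ${B^K}^{SL_h(K)}$.

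The substance of the theorem is that \eqref{eqn:relation0}--\eqref{eqn:relation2} generate \emph{all} relations, and I would establish this together with a basis by standard monomial theory. The combinatorial model attaches to a monomial in $X,Y,Z$ a two-rowed array recording the index pairs of the $X$-factors together with single-rowed arrays recording the indices of the $Y$- and $Z$-factors, and singles out the \emph{standard} monomials by the usual row- and column-monotonicity conditions. The first and main task is to show that the three families of relations act as straightening laws: \eqref{eqn:relation2} straightens products of $Y$'s (resp.\ $Z$'s) among themselves, \eqref{eqn:relation1} is used to move an $X$ past a $Y$ (resp.\ $Z$) into standard position, and \eqref{eqn:relation0} eliminates any surviving $Y\cdot Z$ pair in favor of a determinant of $X$'s. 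One must organize these rewritings so that they terminate, by producing a monomial order on $P$ in which each straightening step strictly lowers the leading term; this yields that the standard monomials \emph{span} $P/J$. This straightening step is the principal obstacle: in contrast to the single-Grassmannian Plücker calculus, three families interact here through the relation \eqref{eqn:relation0}, which trades two generators for a degree-$h$ expression in a third, and through \eqref{eqn:relation1}, so proving that the resulting rewriting system is confluent and terminating is the crux.

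It then remains to show that the images $\phi(\text{standard monomial})$ are linearly independent in ${B^K}^{SL_h(K)}\subset K[a^{(0)}_{il},b^{(0)}_{jl}]$. For this I would run an initial-term (SAGBI) argument: fix a generic term order on the entries of $A$ and $B$, so that the leading term of a minor is its main-diagonal product and the leading term of $X^{(0)}_{ij}$ is a single quadratic monomial; the standardness condition is designed precisely so that distinct standard monomials acquire distinct leading monomials under $\phi$, whence no nontrivial $K$-linear combination of their images can vanish. Combining the three steps finishes the proof: $\phi$ is surjective, the standard monomials span $P/J$, and their images are linearly independent, so these images form a $K$-basis of ${B^K}^{SL_h(K)}$, the standard monomials form a basis of $P/J$, and $\phi$ is an isomorphism. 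I expect the bookkeeping in the straightening of mixed $X$--$Y$--$Z$ monomials, together with the verification that it is compatible with the term order used for independence, to be the only genuinely delicate part.
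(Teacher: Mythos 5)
You should know at the outset that the paper does not actually prove Theorem \ref{main:classical}: it is the classical first and second fundamental theorems for $SL_h(K)$, quoted with references --- the FFT from de Concini--Procesi \cite{DCP}, the SFT from Lakshmibai--Shukla \cite{LS}, and the standard monomial basis from Chapter 11 of \cite{LR} --- and the paper's own arguments begin with the arc-space analogue (Theorem \ref{thm:JSLinvariant+}). Your plan --- check that \eqref{eqn:relation0}--\eqref{eqn:relation2} hold, use them as straightening laws to show that standard monomials span $P/J$, and separate the images of distinct standard monomials by leading terms to get linear independence --- is precisely the standard-monomial strategy of those references, and it is also the template the paper itself follows (Sections 2--6) for the arc-space case. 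So there is no divergence of approach to report; the question is whether your sketch closes.

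It does not close at one genuine point: surjectivity of $\phi$. Citing the FFT is legitimate (it is what the paper does), but your fallback claim that surjectivity ``drops out of the spanning statement below'' is circular: spanning of the standard monomials in $P/J$ shows only that their images span the image of $\phi$, i.e.\ the subalgebra generated by the $X^{(0)}_{ij}$, $Y^{(0)}_U$, $Z^{(0)}_V$; it cannot show that this subalgebra is all of ${B^K}^{SL_h(K)}$, which is exactly the FFT. The torus decomposition does not repair this: the weight decomposition ${B^K}^{SL_h(K)}=\bigoplus_{m} R_{(m)}$ under $\mathbb{G}_m=GL_h(K)/SL_h(K)$ is correct and identifies $R_{(0)}$ with $K[X^{(0)}_{ij}]$ via the $GL_h$-FFT, but the pieces $R_{(m)}$ with $m\neq 0$ --- which you yourself concede are ``the content of the $SL_h$-FFT'' --- are left untouched, and in positive characteristic there is no averaging argument expressing a $\det^m$-semi-invariant in terms of minors; that is the hard point of \cite{DCP}. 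A self-contained way to close the gap, and the one the paper uses for the arc-space statement (Lemma \ref{lemma:equalSLinvariant} and Theorem \ref{thm:JSLinvariant}), is a slice-plus-localization argument: reduce to $p\geq h$ (by adding vector variables), invert $\Gamma=Y^{(0)}_{h,\dots,1}$, use the $SL_h(K)$-equivariant isomorphism $V_\Gamma\cong SL_h(K)\times H$ with $H$ an explicit linear slice to identify the localized invariant ring with the localized candidate subalgebra, and then descend by induction on leading monomials --- using the standard monomial basis already constructed --- to show that any invariant $f=g/\Gamma^n$ lies in the candidate subalgebra itself. Without this step (or a citation), what you have outlined is the SFT conditional on the FFT, not Theorem \ref{main:classical}. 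Two smaller remarks: the straightening step, which you rightly call the crux, is genuinely the bulk of the work (compare the paper's Sections 5 and 6 for its arc-space counterpart) and is only announced, not executed, in your sketch; and confluence of the rewriting system is not needed --- termination with respect to a well-order gives spanning, and together with linear independence of the standard monomial images this already forces the basis property and the injectivity of $\phi$.
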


In the case $K = \mathbb{C}$, these theorems are classical and were proven by Weyl \cite{W}. In the general case, the FFT is due to de Concini and Procesi \cite{DCP}, and the SFT is due to Lakshmibai and Shukla \cite{LS}; see also the book \cite{LR} of Lakshmibai and Raghavan for a uniform approach to these theorems for all the classical groups based on standard monomial theory.
 
\subsection{Standard monomial theory} Let 
$$X^{(0)}_{u_1,\dots,u_r;v_1,\dots,v_r}= \left|
\begin{array}{cccc}
	X^{(0)}_{u_1v_1} & X^{(0)}_{u_1v_2} & \cdots &X^{(0)}_{u_1v_r}\\
	X^{(0)}_{u_2v_1}& X^{(0)}_{u_2v_2} & \cdots & X^{(0)}_{u_2v_r} \\
	\vdots & \vdots & \vdots & \vdots \\
	X^{(0)}_{u_rv_1} & X^{(0)}_{u_rv_2} & \cdots & X^{(0)}_{u_rv_r} \\
\end{array}
\right|.$$
There is a partial order for the set 
$$\{ Y^{(0)}_{u_1,\dots,u_h}, Z^{(0)}_{v_1,\dots,v_h}, X^{(0)}_{u_1,\dots,u_r;v_1,\dots,v_r}|\ r\leq h, u_i<u_{i+1}, v_j<v_{j+1}\}$$
given by
\begin{enumerate}
	\item $Y^{(0)}_{u_1,\dots,u_h}\leq Y^{(0)}_{u'_1,\dots,u'_h}$ if $u_i\leq u'_i$ for all $1\leq i\leq h$;
	\item $Z^{(0)}_{v_1,\dots,v_h}\leq Z^{(0)}_{v'_1,\dots,v'_h}$ if $u_i\leq u'_i$ for all $1\leq i\leq h$;
\item $Y^{(0)}_{u_1,\dots,u_h}\leq X^{(0)}_{u'_1,\dots,u'_r;v'_1,\cdots, v'_r}$ if $u_i\leq u'_i$ for all $1\leq i\leq r$;
\item $Z^{(0)}_{v_1,\dots,v_h}\leq X^{(0)}_{u'_1,\dots,u'_r;v'_1,\cdots, v'_r}$ if $v_i\leq v'_i$ for all $1\leq i\leq r$;
\item $X^{(0)}_{u_1,\dots,u_r;v_1,\cdots,v_r}\leq X^{(0)}_{u'_1,\dots,u'_s;v'_1,\cdots, v'_s}$ if $s\leq r$, $u_i\leq u'_i$ and $v_i\leq v'_i$  for all $1\leq i\leq s$.
\end{enumerate}
The invariant ring $K[V]^{SL_h(K)}$ has a standard monomial basis (chapter 11 in \cite{LR}) with respect to this partially ordered set: the ordered products $A_1A_2 \cdots A_k$ of the elements $A_i$  in the partial order set with $A_i \leq  A_{i+1}$ form a basis of $K[V]^{SL_h(K)}$. This is needed in the proof of Theorem \ref{main:classical} in \cite{DCP} and \cite{LS}.

\subsection{Arc spaces}
Our main goal is to prove the arc space analogue of Theorem \ref{main:classical}. Recall that for a scheme $X$ of finite type over $K$, the arc space $J_\infty(X)$ is defined as the inverse limit of the finite jet schemes $J_n(X)$ \cite{EM}. It is generally of infinite type, and is determined by its functor of points by Corollary 1.2 of \cite{B}. For every $K$-algebra $A$, we have a bijection
$$\Hom(\Spec A, J_\infty(X))\cong\Hom(\Spec A[[t]], X).$$ If $f: X\to Y$ is a morphism of schemes, there is an induced morphism of schemes $f_{\infty}:J_{\infty}(X)\to J_{\infty}(Y)$.

If $\text{char} \ K = 0$, $J_{\infty}(X)$ is irreducible whenever $X$ is irreducible, by a theorem of Kolchin \cite{Kol}. However, even if $X$ is irreducible and reduced, $J_{\infty}(X)$ need not be reduced. An ideal $\cI \subset K[J_{\infty}(X)]$ is said to be {\it differentially finitely generated} if there exists a finite subset $\{f_1,\dots, f_m\} \subset \cI$ such that all normalized derivatives $\bpartial^i f_j$ lie in $\cI$, and $\cI$ is generated by $\{\bpartial^i f_j|\ j = 1,\dots, m, \ i \geq 0\}$. If $\text{char} \ K = 0$, the nilradical $\cN\subset K[J_{\infty}(X)]$ is always a differential ideal, that is, $\partial \cN \subseteq \cN$. The question of when $\cN$ is differentially finitely generated has been raised by several authors \cite{BS,SI,SII,KS}. There are examples, such as the homogeneous coordinate ring of the flag varieties in \cite{FM}\cite{M}, but little is known about this question in general. In our main examples, we will give an explicit differential finite generating set for $\cN$ when $\text{char} \ K = 0$.

\subsection{Invariant theory for arc spaces}
Let $G$ be an algebraic group over $K$ and let $V$ be a finite-dimensional $G$-module. Then $J_{\infty}(G)$ is again an algebraic group, and there is an induced action of $J_{\infty}(G)$ on $J_{\infty}(V)$. 
The quotient morphism $V\to V/\!\!/G$ induces a morphism $J_\infty(V)\to J_\infty(V/\!\!/G)$, so we have a morphism \begin{equation} \label{equ:invariantmap} J_\infty(V)/\!\!/J_\infty(G)\to J_\infty(V/\!\!/G).\end{equation}
In particular, we have a ring homomorphism
\begin{equation} \label{equ:invariantringmap} K[J_{\infty}(V/\!\!/G)] \rightarrow K[J_{\infty}(V)]^{J_{\infty}(G)}.\end{equation} In the case $K=\mathbb C$,  if $V/\!\!/G$ is smooth or a complete intersection, it was shown in \cite{LSSI} that under mild hypotheses, \eqref{equ:invariantringmap} is an isomorphism, although in general it is neither injective nor surjective. 

As above, let $W=K^{\oplus h}$ be the standard representation of $SL_h(K)$ and let $V=W^{\oplus p}\bigoplus {W^*}^{\oplus q}$. Then $J_{\infty}(V)$ has affine coordinate ring
\begin{equation} \label{coordringJV} K[J_{\infty}(V)] = K[a^{(k)}_{il},b^{(k)}_{jl}|\ 1\leq i\leq p, \ 1\leq j\leq q, \ 1\leq l\leq h, \ k\in \Zplus ],\end{equation} which has an induced action of $J_\infty(SL_h(K))$.

Our main result is the following.
  \begin{thm}\label{thm:JSLinvariant+} Let
  	$K[J_{\infty}(V)]^{J_\infty({SL}_h(K))}$ be the ring of invariants under the action of $J_\infty(SL_h(K))$ on $K[J_{\infty}(V)]$.
 	\begin{enumerate}
 		\item 
 	$K[J_{\infty}(V)]^{J_\infty({SL}_h(K))}$ is generated by 
 		\begin{eqnarray*}
 			\bpartial^k X^{(0)}_{ij}, \quad
 			\bpartial^k 	Y^{(0)}_{u_1,\dots,u_h},\quad
 		\bpartial^k	Z^{(0)}_{v_1,\dots, v_h},\quad  k\in\Zplus
 		\end{eqnarray*}
 	with  $1\leq i\leq p, \ 1\leq j\leq q, \ 1\leq u_1<u_2<\cdots<u_h\leq p, \ 1\leq v_1<v_2<\cdots<v_h\leq q$.
 		\item 
 		The ideal of relations among the generators in (1) is generated by 
 		\begin{equation}\label{eqn:relationinvariant0}
 		\bpartial^n\left|
 		\begin{array}{cccc}
 		X^{(0)}_{u_1v_1} & X^{(0)}_{u_1v_2} & \cdots &X^{(0)}_{u_1v_h}\\
 		X^{(0)}_{u_2v_1}& X^{(0)}_{u_2v_2} & \cdots & X^{(0)}_{u_2v_h} \\
 		\vdots & \vdots & \vdots & \vdots \\
 		X^{(0)}_{u_hv_1} & X^{(0)}_{u_hv_2} & \cdots & X^{(0)}_{u_hv_h} \\
 		\end{array}
 		\right|-\sum_{k=0}^n\bpartial^k Y^{(0)}_{u_1,\dots,u_h} \bpartial^{n-k} Z^{(0)}_{v_1,\dots,v_h},\qquad n \geq 0,
 		\end{equation}

 		\begin{equation}\label{eqn:relationinvariant1} \begin{split}
 		& \sum_{k=0}^n\sum_{i=0}^h (-1)^i
 		\bpartial^k X^{(0)}_{u_i v}\bpartial^{n-k} Y^{(0)}_{u_0,\dots,u_{i-1},u_{i+1},\dots,u_h},\qquad n \geq 0,
		\\ &	
 		\sum_{k=0}^n\sum_{j=0}^h (-1)^j
 		\bpartial^k X^{(0)}_{u v_j}\bpartial^{n-k} Z^{(0)}_{v_0,\dots,v_{j-1},v_{j+1},\dots,v_h},\qquad n\geq 0,\end{split} \end{equation}
 		
 		and for any $0\leq l<i \leq h$,
 		\begin{equation}\label{eqn:relationinvariant2}
 		\sum_{k=0}^n\sum_{\sigma} C_{k}^l\frac{\sign(\sigma)}{h!i!} 
 		\bpartial^{n-k} Y^{(0)}_{u'_{h},\dots,u'_{i+1},u_{\sigma(h+i)},\dots,u_{\sigma(h+1)}} \bpartial^k Y^{(0)}_{u_{\sigma(h)},\dots,u_{\sigma(1)}};
 		\end{equation}
 		\begin{equation}\label{eqn:relationinvariant3}
 		\sum_{k=0}^n\sum_{\sigma} C_{k}^l\frac{\sign(\sigma)}{h!i!} 
 		\bpartial^{n-k} Z^{(0)}_{v'_{h},\dots,v'_{i+1},v_{\sigma(h+i)},\dots,v_{\sigma(h+1)}} \bpartial^{k} Z^{(0)}_{v_{\sigma(h)},\dots,v_{\sigma(1)}}.
 		\end{equation}
 		Here the summations are over all permutations $\sigma$ of $1,2,\dots, h+i$, and \begin{equation} \label{def:ckn} C_k^l=\left\{ \begin{array}{cc}
\frac{k!}{l!(k-l)!}, &0\leq l\leq k,\\
0, &\text{otherwise.}
\end{array}\right.
\end{equation}
 		\item There is a  standard monomial basis for $K[J_{\infty}(V)]^{J_\infty(SL_h(K))}$.
 	\end{enumerate}
 \end{thm}
This generalizes the results in Section 4 of \cite{LSSI}, which deal with the following special cases for $K = \mathbb{C}$.
\begin{enumerate}
\item $p<n$ and $q \leq n$, or $p = n$ and $q<n$, so that $V/\!\!/SL_h(\mathbb C)$ is an affine space,
\item $p = n = q$, so that $V/\!\!/SL_h(\mathbb C)$ is a hypersurface,
\item $p = n+1$ and $q<n$, or $p <n$ and $q = n+1$, so that $V/\!\!/SL_h(\mathbb C)$ is a complete intersection.
\end{enumerate}
The first statement in Theorem \ref{thm:JSLinvariant+} shows that the map \eqref{equ:invariantringmap} is surjective. 
Relations (\ref{eqn:relationinvariant0}) and (\ref{eqn:relationinvariant1}) are obtained from relations (\ref{eqn:relation0}) and (\ref{eqn:relation1}), and relations (\ref{eqn:relationinvariant2}) and (\ref{eqn:relationinvariant3}) for $l=0$
 are obtained from relations (\ref{eqn:relation2}), by taking $\bpartial^n$. However, relations (\ref{eqn:relationinvariant2}) and (\ref{eqn:relationinvariant3}) for $0 < l < h$ are not consequences of (\ref{eqn:relation0}), (\ref{eqn:relation1}), and (\ref{eqn:relation2}), and their derivatives. Therefore \eqref{equ:invariantringmap} is not injective if $\text{max}(p,q) -2 > h$. We will see later that when $\text{char} \ K = 0$, the nilradical $\cN$ in the affine coordinate ring $K[J_{\infty}(V /\!\!/ SL_h(K)]$ coincides with the kernel of \eqref{equ:invariantringmap}. Therefore $\cN$ is generated by the relations (\ref{eqn:relationinvariant2}) and (\ref{eqn:relationinvariant3}) in the case $n=0$ and $0<l < h$, together with their normalized derivatives; in particular, $\cN$ is differentially finitely generated. When $p=0$ or $q=0$, only the relations  (\ref{eqn:relationinvariant2}) and (\ref{eqn:relationinvariant3}) appear respectively; they are the semi-infinite Pl\"ucker relations in \cite{FM} for the semi-infinite Grassmannians.
 
The main idea in the proof is to extend the standard monomial basis for $K[V]^{SL_h(K)}$ to a standard monomial basis for $K[J_{\infty}(V)]^{J_\infty({SL}_h(K))}$. In Section \ref{sec:standardmonomials}, we will introduce an integral form of this ring which we denote by $\R^+_h$. It is the quotient of the polynomial ring over $\mathbb{Z}$ generated by $x^{(k)}_{ij}$, $\bpartial^k (u_h, \dots, u_2, u_1|$, and $\bpartial^k | v_1, v_2,\dots, v_h)$, by the ideal generated by the relations \eqref{eqn:relationinvariant0}-\eqref{eqn:relationinvariant3}, where we replace the variables $\bpartial^k X^{(0)}_{ij}$, $\bpartial^k Y^{(0)}_{u_1,\dots, u_h}$, and $\bpartial^k Z^{(0)}_{v_1,\dots, v_h}$ with $x^{(k)}_{ij}$, $\bpartial^k (u_h, \dots, u_2, u_1|$ and $\bpartial^k | v_1, v_2,\dots, v_h)$, respectively. We will define the set of standard monomials in $\R^+_h$, and show (modulo the technical Lemma \ref{lem:base20}, whose proof will be given in Section \ref{sec:base}), that the standard monomials span $\R^+_h$ over $\mathbb{Z}$.

In Section \ref{sec:canonicalbasis}, we will prove that the standard monomials form a $\mathbb{Z}$-basis for $\R^+_h$;  see Theorem \ref{thm:standmonomial1+}. The proof makes use of a ring $\B$ which is an integral form of $K[J_{\infty}(V)]$, i.e., $K[J_{\infty}(V)] \cong \B \otimes_{\mathbb{Z}} K$, and a homomorphism $Q^+_h: \R^+_h \rightarrow \B$ which we show is injective. In Section \ref{sec:application}, we will show that after tensoring with $K$, the image of $Q^+_h$ can be identified with $K[J_{\infty}(V)]^{J_\infty({SL}_h(K))}$; see Theorem \ref{thm:JSLinvariant}. We will also show that $\R^+_h$ is the quotient of a larger ring $\tilde{\R}^+_h$ which is an integral form of $K[J_{\infty}(V /\!\!/ SL_h(K)]$. So after tensoring with $K$, the composition of the maps $\tilde{\R}^+_h \rightarrow \R^+_h$ and $Q^+_h: \R^+_h \rightarrow \B$ is exactly the natural map $K[J_{\infty}(V /\!\!/ SL_h(K)] \rightarrow K[J_{\infty}(V)]^{J_\infty({SL}_h(K))}$, which proves Theorem \ref{thm:JSLinvariant+}. Finally, Section \ref{sec:properties} is devoted to establishing some technical properties of standard monomials which are needed in the proof of Lemma \ref{lem:base20}.

Theorem \ref{thm:JSLinvariant+} has significant applications to vertex algebras which we develop in \cite{LS2,LS3}. First, it allows a complete description of certain cosets of affine vertex algebras inside free field algebras that are related to the classical Howe pairs. These cosets appear in some new level-rank dualities involving affine vertex superalgebras. Next, for any smooth manifold $X$ in either the algebraic, complex analytic or smooth settings, the {\it chiral de Rham complex}  $\Omega^{\text{ch}}_X$ is a sheaf of vertex algebras on $X$. It was introduced by Malikov, Schechtman and Vaintrob in \cite{MSV}, and has attracted considerable attention in both physics and mathematics. Theorem \ref{thm:JSLinvariant+} is essential in the description of the vertex algebra of global sections $\Gamma(X, \Omega^{\text{ch}}_X)$ for a $d$-dimensional compact K\"ahler manifold $X$ with holonomy group $SU(d)$. This algebra turns out to be isomorphic to the algebra with central charge $3d$ constructed by Odake in \cite{O}. It is an important building block in the structure of $\Gamma(X, \Omega^{\text{ch}}_X)$ for an arbitrary compact Ricci-flat K\"ahler manifold $X$.

\section{Standard monomials} \label{sec:standardmonomials}
\subsection{A ring and its derivation} Fix integers $p,q \geq 1$, and let 
$$\R = \R^{p,q}=\mathbb Z[x^{(k)}_{ij}| \ 1\leq i\leq p,\ 1\leq j\leq q,\ k\in \Zplus].$$ This is related to the affine coordinate ring of the arc space of the space $M_{p,q} = M_{p,q}(K)$ of $p\times q$ matrices over a field $K$ as follow: $K[J_{\infty}(M_{p,q})] \cong \R \otimes_{\mathbb{Z}} K$.

For $h>0$, let $\cJ_h^L$ be the set of expressions
 \begin{equation}\label{eqn:Lsequence}J^L=\bpartial^k(u_h,\dots,u_2,u_1|,\quad \text{with } 1\leq u_1< u_2<\cdots <u_h\leq p,\, k\in \mathbb Z_{\geq 0},
\end{equation}
 and $\cJ_h^R$ be the set of expressions \begin{equation}\label{eqn:Lsequence}J^R=\bpartial^k|v_1,v_2,\dots,v_h),\quad \text{with } 1\leq v_1< v_2<\cdots <v_h\leq q,\, k\in \mathbb Z_{\geq 0}. \end{equation} We call $wt(J^L)=wt(J^R)=k$  the weights of $J^L$ and $J^R$.

For a permutation $\sigma$ of $1,2,\dots ,h$, let $$\bpartial^k|u_{\sigma(h)},\dots, u_{\sigma(2)},u_{\sigma(1)})=\sign(\sigma)J^L,\quad \bpartial^k|v_{\sigma(1)},v_{\sigma(2)},\dots,v_{\sigma(h)})=\sign(\sigma)J^R.$$
If there is $1\leq i< j\leq h$ with $u_i=u_j$, let  $\bpartial^k(u_h,\dots,u_2,u_1|=0$.
If there is $1\leq i< j\leq h$ with $v_i=v_j$, let
$\bpartial^k|v_1,v_2,\dots,v_h)=0$.

There is a derivation $\partial$ on the polynomial ring $\R^+=\R[ \cJ_h^L\cup  \cJ_h^R]$ given by 
\begin{eqnarray*}
\partial x^{(k)}_{ij}&=&(k+1)x^{(k+1)}_{ij},\\
\partial\,\bpartial^k(u_h,\dots,u_2,u_1|&=&
(k+1)\bpartial^{k+1}(u_h,\dots,u_2,u_1|,\\
\partial\,\bpartial^k|v_1,v_2,\dots,v_h)&=&(k+1)\bpartial^{k+1}|v_1,v_2,\dots,v_h).
\end{eqnarray*}
Let $\bpartial^k=\frac 1{k!} \partial^k$ be the normalized $k$-derivation. The following propositions are easy to verify.
\begin{prop} \label{prop:dab} For any $a,b\in\R^+$,
	$$\bpartial^l(ab)=\sum_{i=0}^l\bpartial^ia\, \bpartial^{l-i}b$$
	and $\bpartial^l a\in \R^+$.
\end{prop}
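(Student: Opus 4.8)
\emph{Proof proposal.} The plan is to separate the two assertions: the Leibniz-type identity is a purely formal consequence of the higher-order product rule for a single derivation, while the claim $\bpartial^l a \in \R^+$ is an integrality statement that must be checked because $\bpartial^l = \frac{1}{l!}\partial^l$ involves division by $l!$. I would first work in $\R^+ \otimes \mathbb{Q}$, where $\bpartial^l$ is literally $\frac{1}{l!}\partial^l$. An easy induction on $l$, using only the defining Leibniz rule $\partial(ab) = (\partial a)b + a(\partial b)$, gives $\partial^l(ab) = \sum_{i=0}^l \binom{l}{i}(\partial^i a)(\partial^{l-i}b)$. Dividing by $l!$ and rewriting $\frac{1}{l!}\binom{l}{i} = \frac{1}{i!\,(l-i)!}$ yields $\bpartial^l(ab) = \sum_{i=0}^l \bpartial^i a\,\bpartial^{l-i}b$, which is the desired identity in $\R^+\otimes\mathbb{Q}$.

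For the integrality assertion, I would argue that $\bpartial^l$ maps the $\mathbb{Z}$-form $\R^+$ into itself; the identity above, being an equation in $\R^+\otimes\mathbb{Q}$ whose terms then all lie in $\R^+$, automatically descends to $\R^+$ (which is torsion-free, hence embeds in its rationalization). Because $\bpartial^l$ is $\mathbb{Q}$-linear and $\R^+$ is a polynomial ring over $\mathbb{Z}$ on the generating set $\{x_{ij}^{(k)}\}\cup\cJ_h^L\cup\cJ_h^R$, it suffices, by the Leibniz identity and induction on the number of factors, to check that $\bpartial^l$ sends each generator into $\R^+$. On generators this is an explicit computation: iterating $\partial x_{ij}^{(k)} = (k+1)x_{ij}^{(k+1)}$ gives $\partial^l x_{ij}^{(k)} = \frac{(k+l)!}{k!}x_{ij}^{(k+l)}$, so $\bpartial^l x_{ij}^{(k)} = \binom{k+l}{l}x_{ij}^{(k+l)}$, and the same computation for the sequences gives $\bpartial^l\bpartial^k(u_h,\dots,u_1| = \binom{k+l}{l}\bpartial^{k+l}(u_h,\dots,u_1|$ and likewise $\bpartial^l\bpartial^k|v_1,\dots,v_h) = \binom{k+l}{l}\bpartial^{k+l}|v_1,\dots,v_h)$. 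In each case the coefficient is an integer and the result is a multiple of another generator, hence lies in $\R^+$.

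The only point requiring care — and the reason the statement is worth recording — is precisely this integrality: a priori $\frac{1}{l!}\partial^l$ makes sense only after inverting $l!$, so one must confirm that no denominators survive. This is exactly the phenomenon already handled for $\B = \mathbb{Z}[\cS_h]$ and for the jet variables of affine space in the introduction, the binomial coefficients above being its concrete manifestation. Since the generators of $\R^+$ fall into sequences $\{x_{ij}^{(k)}\}_k$, $\{\bpartial^k(u_h,\dots,u_1|\}_k$, and $\{\bpartial^k|v_1,\dots,v_h)\}_k$ that each behave exactly like the jet sequence $\{x_i^{(k)}\}_k$ of a single coordinate, the verification reduces verbatim to that already-established case, and tensoring with $K$ then produces the well-defined $K$-linear map $\bpartial^l$ on $\R^+\otimes K$ in arbitrary characteristic.
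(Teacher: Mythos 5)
Your proof is correct, and it is exactly the verification the paper has in mind: the paper states this proposition without proof (``easy to verify''), relying on the same standard facts you spell out — the higher Leibniz rule over $\mathbb{Q}$, integrality on generators via $\bpartial^l x_{ij}^{(k)}=\binom{k+l}{l}x_{ij}^{(k+l)}$ (and likewise for the bracket generators), and propagation to all of $\R^+$ by the Leibniz identity plus torsion-freeness of the $\mathbb{Z}$-form. Nothing to add.
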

Let
\begin{equation}\label{eqn:minor} B=\left|
\begin{array}{cccc}
x^{(0)}_{u_1v_1} & x^{(0)}_{u_1v_2} & \cdots &x^{(0)}_{u_1v_h}\\
x^{(0)}_{u_2v_1}& x^{(0)}_{u_2v_2} & \cdots & x^{(0)}_{u_2v_h} \\
\vdots & \vdots & \vdots & \vdots \\
x^{(0)}_{u_hv_1} & x^{(0)}_{u_hv_2} & \cdots & x^{(0)}_{u_hv_h} \\
\end{array}
\right|.
\end{equation}
\begin{prop}\label{prop:detexpression}

	\begin{equation}\bpartial^n B=\sum_{\substack{n_1+\cdots+n_h=n\\n_i\in \mathbb Z_{\geq 0}}}\sum_{\sigma}\sign(\sigma)x^{(n_1)}_{u_1v_{\sigma(1)}} x^{(n_2)}_{u_2v_{\sigma(2)}}\cdots x^{(n_h)}_{u_hv_{\sigma(h)}}.
	\end{equation}
\end{prop}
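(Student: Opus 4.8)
The plan is to reduce everything to the divided-power Leibniz rule of Proposition \ref{prop:dab} together with an explicit formula for divided powers of a single generator. First I would expand the determinant $B$ via its Leibniz formula,
\begin{equation*}
B = \sum_{\sigma} \sign(\sigma)\, x^{(0)}_{u_1 v_{\sigma(1)}} x^{(0)}_{u_2 v_{\sigma(2)}} \cdots x^{(0)}_{u_h v_{\sigma(h)}},
\end{equation*}
the sum running over all permutations $\sigma$ of $1,\dots,h$. Since $\bpartial^n = \frac{1}{n!}\partial^n$ is $\mathbb Z$-linear and maps $\R^+$ to itself by Proposition \ref{prop:dab}, it suffices to compute $\bpartial^n$ applied to each monomial $x^{(0)}_{u_1 v_{\sigma(1)}} \cdots x^{(0)}_{u_h v_{\sigma(h)}}$ and then recombine the results against $\sign(\sigma)$.

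Two ingredients complete the computation. The first is the multi-factor version of Proposition \ref{prop:dab}: by an immediate induction on the number of factors, grouping $a_1(a_2\cdots a_h)$ and applying the two-factor rule at each step, one obtains
\begin{equation*}
\bpartial^n(a_1 a_2 \cdots a_h) = \sum_{\substack{n_1+\cdots+n_h=n\\ n_i \in \Zplus}} \bpartial^{n_1}a_1\, \bpartial^{n_2}a_2 \cdots \bpartial^{n_h}a_h.
\end{equation*}
The second is the evaluation $\bpartial^m x^{(0)}_{ij} = x^{(m)}_{ij}$ for all $m \geq 0$: from $\partial x^{(k)}_{ij} = (k+1)x^{(k+1)}_{ij}$ one gets $\partial^m x^{(0)}_{ij} = m!\, x^{(m)}_{ij}$ by induction, whence $\bpartial^m x^{(0)}_{ij} = \frac{1}{m!}\partial^m x^{(0)}_{ij} = x^{(m)}_{ij}$. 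Applying the multi-factor rule to each permutation monomial, with $a_i = x^{(0)}_{u_i v_{\sigma(i)}}$ so that $\bpartial^{n_i} a_i = x^{(n_i)}_{u_i v_{\sigma(i)}}$, and summing over $\sigma$ with signs, yields exactly the asserted formula.

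There is no substantial obstacle here; the statement is a direct calculation. The only point requiring care is that one must use the divided-power Leibniz rule rather than the ordinary Leibniz rule for $\partial^n$: the multinomial factors $\binom{n}{n_1,\dots,n_h}$ that would otherwise appear are precisely absorbed by the normalization $\frac{1}{n!}$, leaving the clean sum over compositions $n_1+\cdots+n_h=n$ with unit coefficients. Since Proposition \ref{prop:dab} guarantees this rule holds already over $\mathbb Z$ (and hence after tensoring with any $K$), the identity is valid in all characteristics.
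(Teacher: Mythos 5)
Your proposal is correct: the paper offers no proof of this proposition (it is dismissed as ``easy to verify''), and your direct calculation --- the Leibniz expansion of the determinant, the multi-factor extension of the divided-power rule of Proposition \ref{prop:dab}, and the evaluation $\bpartial^m x^{(0)}_{ij}=x^{(m)}_{ij}$ --- is exactly the routine verification the authors intend. Your closing remark is also the right one to make: working with $\bpartial^n=\frac{1}{n!}\partial^n$ over $\mathbb Z$ absorbs the multinomial coefficients and makes the identity valid after tensoring with any field $K$.
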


Let 
\begin{equation} \label{def:ringrplush} \R^+_h=\R^+/\cI_h^+,\end{equation} where $\cI_h^+$ is the ideal generated by
	\begin{equation}\label{eqn:relationLR}
	\bpartial^n B-\bpartial^n(\bpartial^0(u_h,\dots,u_2,u_1| \,\,\,\bpartial^{0}|v_1,v_2,\dots,v_h))=0.
	\end{equation}	
	\begin{equation}\label{eqn:basicrelation2}\bpartial^n\sum_{i=0}^h (-1)^i
x^{(0)}_{u_i v}\,	\bpartial^{0} (u_h,\dots,u_{i+1},u_{i-1},\dots,u_0|
	=0;
	\end{equation}
	\begin{equation}
	\label{eqn:baiscrelation3}
	 \bpartial^n\sum_{j=0}^h (-1)^j
	x^{(0)}_{u v_j}\,	\bpartial^{0}|v_0,\dots,v_{j-1},v_{j+1},\dots,v_h)
	=0;
	\end{equation}
 For any $0\leq l<i$,
	\begin{equation}\label{eqn:basicrelation4}
	\bpartial^n\sum_{\sigma} \frac{\sign(\sigma)}{h!i!} 
	\bpartial^{0} (u'_{h},\dots,u'_{i+1},u_{\sigma(h+i)},\dots,u_{\sigma(h+1)}|\,\,	\bpartial^{l} (u_{\sigma(h)},\dots,u_{\sigma(1)}|
	=0;
	\end{equation}
	\begin{equation}\label{eqn:basicrelation5}
	\bpartial^n\sum_{\sigma} \frac{\sign(\sigma)}{h!i!} 
	\bpartial^{0} |v'_{h},\dots,v'_{i+1},v_{\sigma(h+i)},\dots,v_{\sigma(h+1)})\,\,	\bpartial^{l} |v_{\sigma(h)},\dots,v_{\sigma(1)})
	=0.
	\end{equation}
Here the summations are over all permutations $\sigma$ of $1,2,\dots, h+i$. Then $\R^+_h$ inherits the derivation $\partial$ and normalized $k$-derivation $\bpartial^k$ from $\R^+$.

\subsection{Generators}
The determinant $B$ in \eqref{eqn:minor} can be represented by the pair of ordered $h$-tuples 
$$(u_h,\dots,u_2,u_1|v_1,v_2,\dots,v_h),\qquad 1\leq u_i< u_{i+1}\leq p,\qquad 1\leq v_i< v_{i+1}\leq q.$$ Similarly, let
\begin{equation}\label{eqn:sequenceJ}
J=\bpartial^k(u_h,\dots,u_2,u_1|v_1,v_2,\dots,v_h),\qquad 1\leq u_i< u_{i+1}\leq p,\qquad 1\leq v_i< v_{i+1}\leq q,
\end{equation} 
represent $\bpartial^kB\in\R$. For convenience, we will call such expressions {\it $\bpartial$-lists} throughout this paper. We call $wt(J)=k$  the weight of $J$ and call $sz(J)=h$ the size of $J$.  Let $\cJ$ be the set of these $\bpartial$-lists, and
$$\cJ_h=\{J\in \cJ| \ sz(J)\leq h\}$$ 
be the set of elements in $\cJ$ with size less than or equal to $h$.  Let 
$$\cJ_h^+= \cJ_{h-1} \cup \cJ_h^L\cup  \cJ_h^R.$$
$\R_h^+$ is generated by $\cJ_h^+$. 
Let $$J(L)=\bpartial^0(u_{h},\dots,u_1|\in\cJ^L_{h}, \quad J(R)=\bpartial^0|v_1,\dots,v_{h})\in \cJ^R_{h'}$$
be the left hand part and the right hand part of $J$, respectively.

Let $\cE$ be the set of pairs of ordered $h$-tuples of ordered pairs
\begin{equation}\label{eqn:element}
E=((u_h,k_h),\dots,(u_2,k_2),(u_1,k_1)|(v_1,l_1),(v_2,l_2),\dots,(v_h,l_h)),
\end{equation}
where
\begin{enumerate}
\item $1\leq u_i\leq p$, and $u_i\neq u_j$ if $i\neq j$,
\item $1\leq v_i\leq q$, and $v_i\neq v_j$ if $i\neq j$,
\item $k_i,l_j\in \Zplus$.
\end{enumerate}
Let
$$
||E||=\frac{1}{n!}\partial^n(u_{\sigma(h)},\dots,u_{\sigma(2)},u_{\sigma(1)}|v_{\sigma'(1)},v_{\sigma'(2)},\dots,v_{\sigma'(h)})\in\cJ.
$$
Here
$n=\sum k_i+\sum l_i$ and $\sigma$, $\sigma'$ are the unique permutations of $1,2,\dots, h$ such that $u_{\sigma(i)}< u_{\sigma(i+1)}$ and $v_{\sigma(i)}<v_{\sigma(i+1)}$. Let 
$$wt(E)=wt(||E||),\quad\quad sz(E)=sz(||E||).$$
Let $$\cE_h=\{E\in\cE|\,\,sz(E)\leq h\}.$$
For $h>0$,
let $\cE_h^L$ be the set of ordered $h$-tuples of ordered pairs
\begin{equation}\label{eqn:LEsequence}
E^L=((u_h,k_h),\dots,(u_2,k_2),(u_1,k_1)|,\quad 1\leq u_i\leq p,\, k_i\geq \Zplus
\end{equation}
with $1\leq u_i\leq p$ and $u_i\neq u_j$ if $i\neq j$. Similarly, let
$\cE_h^R$ be the set of ordered $h$-tuples of ordered pairs
\begin{equation}\label{eqn:REsequence}
E^R=|(v_1,l_1),(v_2,l_2),\dots,(v_h,l_h)),\quad 1\leq v_j\leq q,\, l_j\geq \Zplus
\end{equation}
with $1\leq v_i\leq q$ and $v_i\neq v_j$ if $i\neq j$.
Let
$$||E^L||=\bpartial^k(u_{\sigma(h)},\dots,u_{\sigma(2)},u_{\sigma(1)}|\in \cJ^L_h, \quad ||E^R||=\bpartial^k|v_{\sigma'(1)},v_{\sigma'(h)},\dots,v_{\sigma'(h)})\in\cJ^R_h.$$
Here $k=\sum k_i$, $l=\sum k_j$, $\sigma$ and $\sigma'$ are the unique permutations of $1,2,\dots,h$ such that 
$u_{\sigma(i)}<u_{\sigma(i+1)}$ and $v_{\sigma'(i)}<v_{\sigma'(i+1)}$.
Let $$wt(E^L)=wt(||E^L||), \quad wt(E^R)=wt(||E^R||).$$
Let $\cE_h^+= \cE_h^L\cup \cE_h^R\cup\cE_{h-1}$. For $J\in \tilde \cJ_h^+$, let 
\begin{equation} \label{def:E(J)} \cE(J)=\{E \in \cE_h^+|\,\, ||E||=J\}.\end{equation}
We can use the elements in $\cE(J)$ to represent $J$.

Let 
$$F:\cE^L_h\times \cE^R_h\to \cE,\quad F(E^L,E^R)=E.$$
Let
$E(L)=E^L$ and $E(R)=E^R$ be the left hand part and the right hand part of $E$, respectively. 
Then for any $E\in \cE$, $F(E(L), E(R))=E$.

\subsection{Ordering} For any set $\cS$, let $\cM(\cS)$ be the set of ordered products of elements of $\cS$.
If $\cS$ is an ordered set, we order the set  $\cM(\cS)$ lexicographically, that is 
$$S_1S_2\cdots S_m\prec S'_1S'_2\cdots S'_n$$ 
if for some $i_0 \leq m$ we have $S_i=S'_i$ for all $i< i_0$, and $S_{i_0}\prec S'_{i_0}$, or if $S_i=S'_i$ for all $i \leq m$ and $n>m$. We order $\cM(\mathbb Z)$, the set of ordered products of integers, lexicographically.

 We define an ordering for the set $ \cJ_h^+$:
\begin{enumerate}
	\item $
	\bpartial^k(u_{r},\dots,u_2,u_1|v_1,v_2,\dots,v_{r})\prec \bpartial^{k'}(u'_{h'},\dots,u'_2,u'_1|v'_1,v'_2,\dots,v'_{h'})
	$, if $h'< r$, or $h'=r$ and $k< k'$, or  $h'=r$, $k= k'$  and $u_{r}\cdots u_1v_{r}\cdots v_1\prec u'_{r}\cdots   u'_1v'_{r}\cdots v'_1$.
	\item
	$\bpartial^k(u_h,\dots,u_2,u_1| \prec \bpartial^{k'} |v_1,v_2,\dots,v_h)\prec \bpartial^{k''}(u'_{h'},\dots,u'_2,u'_1|v'_1,v'_2,\dots,v'_{h'})$,
	\item
	$\bpartial^k(u_h,\dots,u_2,u_1|\prec \bpartial^{k'}(u'_h,\dots,u'_2,u'_1|$, if
	$k< k'$, or  $k= k'$  and $u_h\cdots u_1\prec u'_h\cdots u'_1$;\\
	$\bpartial^{k} |v_1,v_2,\dots,v_h)\prec \bpartial^{k'} |v'_1,v'_2,\dots,v'_h)$, if
	$k< k'$, or  $k= k'$  and 
 $v_h\cdots v_1\prec v'_h\cdots v'_1$. 
\end{enumerate}
We order $\cM(J_h^+)$, the words of elements of $\cJ_h^+$ lexicographically. 

 We order the pairs $(u,h)\in \Zplus\times \Zplus$ by
$$(u,h)\leq (u',h'), \text{ if } h<h' \text{ or } h=h' \text{ and }u\leq u'.$$  We order $\cM(\Zplus\times \Zplus)$, the words of $\Zplus\times \Zplus$ lexicographically.

There is an ordering for the set $\cE_h^+$:
\begin{enumerate}
	\item $ 
	((u_r,k_r),\dots,(u_1,k_1)|(v_1,l_1),\dots,(v_r,l_r))\prec ((u'_{h'},k'_{h'}),\dots,(u'_1,k'_1)|(v'_1,l'_1),\dots,(v'_{h'},l'_{h'}))
	$, if $r>h'$, or $r=h'$ and $\sum (k_i+l_i)<\sum (k'_i+l'_i)$, or  $r=h'$, $\sum (k_i+l_i)=\sum (k'_i+l'_i)$ and
		$(u_r,k_r)\cdots(u_1,k_1)(v_r,l_r)\cdots(v_1,l_1)\prec (u'_{h'},k'_{h'})\cdots(u'_1,k'_1)(v'_{h'},l'_{h'})\cdots(v'_{1},l'_{1})$,
	\item $((u_h,k_h),\dots,(u_1,k_1)|\prec |(v_1,l_1),\dots,(v_h,l_h))\prec ((u'_{h'},k'_{h'}),\dots,(u'_1,k'_1)|(v'_1,l'_1),\dots,(v'_{h'},l'_{h'}))$,
	\item  $((u_h,k_h),\dots,(u_1,k_1)|\prec ((u'_h,k'_h),\dots,(u'_1,k'_1)|$\\
	if
	$\sum k_i<\sum k'_i$ or $\sum k_i=\sum k'_i$ and
	$(u_h,k_h)\cdots(u_1,k_1)\prec (u'_{h},k'_{h})\cdots(u'_1,k'_1)$,
	\item
	$|(v_1,l_1),\dots,(v_h,l_h))\prec |(v'_1,l'_1),\dots,(v'_h,l'_h))$\\
	if  $\sum l_i<\sum l'_i$ or $\sum l_i=\sum l'_i$ and $(v_h,l_h)\cdots(v_1,l_1)\prec (v'_{h},l'_{h})\cdots (v'_{1},l'_{1})$.
\end{enumerate}
There is a partial ordering for the set $\cE_h^+$:
\begin{enumerate}
	\item $
	((u_r,k_r),\dots,(u_1,k_1)|(v_1,l_1),\dots,(v_r,l_r))\leq ((u'_{h'},k'_{h'}),\dots,(u'_1,k'_1)|(v'_1,l'_1),\dots,(v'_{h'},l'_{h'}))
	$, 
	if $h'\leq r$ and $(u_i,k_i)\leq (u'_i,k'_i)$, $(v_i,l_i)\leq (v'_i,l'_i)$, for $1\leq i\leq h'$,
	\item $((u_{h},k_{h}),\dots,(u_1,k_1)|\leq ((u'_{h'},k'_{h'}),\dots,(u'_1,k'_1)|(v'_1,l'_1),\dots,(v'_{h'},l'_{h'}))$\\
	if  $(u_i,k_i)\leq (u'_i,k'_i)$, for $1\leq i\leq h'$;
	\item
	$|(v_1,l_1),\dots,(v_{h},l_{h}))\leq ((u'_{h'},k'_{h'}),\dots,(u'_1,k'_1)|(v'_1,l'_1),\dots,(v'_{h'},l'_{h'}))$\\
	if $(v_i,l_i)\leq (v'_i,l'_i)$, for $1\leq i\leq h'$;
	\item $ ((u_{h},k_{h}),\dots,(u_1,k_1)|\leq ((u'_{h},k'_{h}),\dots,(u'_1,k'_1)|$ \quad 
	if  $(u_i,k_i)\leq (u'_i,k'_i)$, for $1\leq i\leq h$;
	\item
	$ |(v_1,l_1),\dots,(v_{h},l_{h}))\leq |(v'_1,l'_1),\dots,(v'_{h},l'_{h}))$ \quad 
	if $(v_i,l_i)\leq (v'_i,l'_i)$, for $1\leq i\leq h$;
	\end{enumerate}
The following statements are immediate from the definition of the partial order.
\begin{enumerate}
	\item 
	
	For any $E, E'\in \cE$ with $h'=sz(E')$, if $E\leq E'$, then $E(h')(L)\leq E'(L)$ and $E(h')(R)\leq E'(R)$;
	\item 
	For $E_1, E_2\in \cE^L_h$, if $E_1\leq E_2$, then for any $E_3\in \cE^R_h$,
	$F(E_1,E_3)\leq F(E_2, E_3)$.
	\item	For $E_1, E_2\in \cE^R_h$, if $E_1\leq E_2$, then for any $E_3\in \cE^L_h$,
	$F(E_3,E_1)\leq F(E_3, E_2)$.
	
\end{enumerate}
\begin{lemma}\label{lemma:compareEE}
	If $E\leq E'$, then $||E||\prec ||E'||$.
\end{lemma}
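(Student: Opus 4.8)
The plan is to prove the monotonicity of $||\cdot||$ by running through the cases of the partial order on $\cE_h^+$ and matching each to the corresponding clause of the total order $\prec$ on $\cJ_h^+$. The first thing I would record is that $||\cdot||$ respects type: it carries $\cE_h^L$ into $\cJ_h^L$, $\cE_h^R$ into $\cJ_h^R$, and $\cE_{h-1}$ into $\cJ_{h-1}$. Since the partial order relates only $L$ to $L$, $R$ to $R$, $L$ to full, $R$ to full, and full to full (never $L$ to $R$), the cross-type cases are immediate: if $E$ is an $L$- or $R$-sequence and $E'$ is full, then $||E||$ is a left or right element while $||E'||$ is full, so $||E|| \prec ||E'||$ holds for free by the clause of the order on $\cJ_h^+$ declaring that all left elements precede all right elements, which precede all full elements. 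Likewise, in the full-to-full case with $sz(E') < sz(E)$, the clause for full elements gives $||E|| \prec ||E'||$ at once, since the larger-size element is smaller in $\prec$.

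The substantive cases are those in which $E$ and $E'$ share the same type and the same size. Here I would first extract the weight inequality from the pair order: because $(u_i,k_i) \le (u'_i,k'_i)$ forces $k_i \le k'_i$ (the derivative index is the primary key of the pair order) and similarly $l_i \le l'_i$, summing yields $wt(E) \le wt(E')$. If this is strict, the relevant clause (the one for full elements, or the one comparing two $L$- or two $R$-elements) concludes $||E|| \prec ||E'||$ directly. The remaining subcase is equality of weights, which forces $k_i = k'_i$ and $l_i = l'_i$ for every $i$, and hence $u_i \le u'_i$ and $v_i \le v'_i$ termwise in the \emph{listed} order.

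The hard part, and the heart of the argument, is to transfer this termwise domination of the listed index sequences into a comparison of the \emph{sorted} sequences that appear inside $||E||$ and $||E'||$. For this I would isolate the standard rearrangement fact as a sorting lemma: if $a_i \le b_i$ for all $i$, then after sorting both tuples increasingly one still has $a^{\mathrm{sort}}_i \le b^{\mathrm{sort}}_i$ for all $i$, because the $i$-th smallest entry of $b$ dominates at least $i$ of the $a_j$. Applied to the $u$- and $v$-indices, this gives $u^{\mathrm{sort}}_i \le u'^{\mathrm{sort}}_i$ and $v^{\mathrm{sort}}_i \le v'^{\mathrm{sort}}_i$. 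Since $||E||$ and $||E'||$ are then compared by the reversed lexicographic order on these sorted words, I would locate the first position at which the words differ: termwise domination makes the entry of $||E||$ strictly smaller there, which yields $||E|| \prec ||E'||$. If no such position exists the two sorted words coincide, and then equal total weight together with termwise domination of distinct indices (equal multisets plus componentwise domination with equal sums) forces $E = E'$; thus $||E|| = ||E'||$ can occur only when $E = E'$, and the inequality is strict whenever $E \neq E'$. This disposes of strictness and completes the proof. The one genuine obstacle is the sorting step, since all the other reductions are bookkeeping against the two order definitions, whereas this step is where the unsorted hypothesis and the sorted conclusion are reconciled.
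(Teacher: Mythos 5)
Your proposal is correct and follows essentially the same route as the paper's proof: dispose of the cross-type cases via the clause making left elements precede right elements precede full ones, handle same-type pairs by comparing weights, and reduce the equal-weight case to $k_i = k'_i$ (resp.\ $l_j = l'_j$) and termwise domination $u_i \le u'_i$, $v_j \le v'_j$. The only difference is that you spell out the sorting step (termwise domination of the listed indices implies termwise domination after both tuples are sorted) and the strictness/equality analysis, both of which the paper's proof passes over silently; this is a refinement of detail rather than a different argument.
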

\begin{proof}
	If $E\in \cE_h^L$ and $E'\in \cE_h^R\cup \cE_{h-1}$ or $E\in \cE_h^R$ and $E'\in \cE_{h=1}$, then $||E||\prec ||E'||$.
	Otherwise,  $E, E'\in \cE^L_h$, $E, E'\in \cE^R_h$ or $E, E'\in \cE_{h-1}$, so if $wt(E)<wt(E')$, then $||E||\prec ||E'||$.
	
	If $E, E'\in \cE^L_h$ and $wt(E)=wt(E')$,  we must have $k_i=k_i'$, so $u_i\leq u'_i$.
If $E, E'\in \cE^R_h$ and $wt(E)=wt(E')$,  we must have $l_j=l_j'$, so $v_j\leq v'_j$.
If $E, E'\in \cE_{h-1}$ and $sz(E)=sz(E')$, we must have $k_i=k_i'$ and $l_j=l_j'$. Then $u_i\leq u'_i$ and $v_j\leq v'_j$, and we have $||E||\prec ||E'||$.
\end{proof}
\subsection{Straightening relations}
\begin{lemma}
	 For any $0\leq l<i$,
	\begin{equation}\label{eqn:basicrelation+4}
	\sum_{\sigma} \frac{\sign(\sigma)}{h!i!} 
	\bpartial^{l} (u'_{h'},\dots,u'_{i+1},u_{\sigma(h+i)},\dots,u_{\sigma(h+1)}|	v_1,\dots v_{h'})
\,\,	\bpartial^{0} (u_{\sigma(h)},\dots,u_{\sigma(1)}|
	=0;
	\end{equation}
\begin{equation}\label{eqn:basicrelation+5}
\sum_{\sigma} \frac{\sign(\sigma)}{h!i!} 
\bpartial^{l} (u_{h'},\dots,u_1|v'_{h'},\dots,v'_{i+1},v_{\sigma(h+i)},\dots,v_{\sigma(h+1)})\,\,	\bpartial^{0} |v_{\sigma(h)},\dots,v_{\sigma(1)})
=0.
\end{equation}
The summations are over all permutations $\sigma$ of $1,2,\dots, h+i$
\end{lemma}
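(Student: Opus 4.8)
The plan is to reduce everything to the pure-$Y$ straightening relation \eqref{eqn:basicrelation4}, which already generates part of $\cI_h^+$. First, by the left--right symmetry $a\leftrightarrow b$, $u\leftrightarrow v$, $Y\leftrightarrow Z$, it suffices to prove \eqref{eqn:basicrelation+4}; then \eqref{eqn:basicrelation+5} follows by the same argument with the two index families interchanged. The geometric picture I will use only to organize the computation is that $\sum_k(\bpartial^k Y^{(0)}_{u_1,\dots,u_h})\,t^k$ is the top exterior coordinate of the jet wedge $a_{u_1}(t)\wedge\cdots\wedge a_{u_h}(t)\in\Lambda^{h}(K^{\oplus h}\otimes K[[t]])$, while an $X$-minor is, via Cauchy--Binet, an alternating multilinear function of its jet rows. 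From this viewpoint the left-hand side of \eqref{eqn:basicrelation+4} is the coefficient of $t^l$ in an $S_{h+i}$-antisymmetrization of an $h'\times h'$ jet $X$-minor carrying $i$ variable rows against an order-zero factor $Y$. Since $\Lambda^{h+i}(K^{\oplus h})=0$, such an antisymmetrization of $h+i>h$ vectors must vanish to order $i$ in $t$, which is exactly the condition $0\le l<i$.

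To make this formal I will first treat the case $h'=h$, where the first factor is a full $h\times h$ minor. Here I apply the minor-factorization relation \eqref{eqn:relationLR}, which replaces $\bpartial^l$ of that minor by $\sum_{m=0}^{l}\bpartial^m(u'_{h},\dots,u_{\sigma(h+1)}|\,\bpartial^{l-m}|v_1,\dots,v_h)$ modulo $\cI_h^+$ (using Proposition \ref{prop:dab}). The factor $\bpartial^{l-m}|v_1,\dots,v_h)$ is independent of $\sigma$, so it pulls out of the antisymmetrization, and \eqref{eqn:basicrelation+4} reduces to showing that
$$H_{m}:=\sum_{\sigma}\frac{\sign(\sigma)}{h!\,i!}\,\bpartial^m(u'_{h},\dots,u'_{i+1},u_{\sigma(h+i)},\dots,u_{\sigma(h+1)}|\;\cdot\;\bpartial^{0}(u_{\sigma(h)},\dots,u_{\sigma(1)}|$$
is congruent to $0$ modulo $\cI_h^+$ whenever $m<i$ (note that $m\le l<i$ in the sum above). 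This $H_m$ is the ``derivative-transposed'' shape of \eqref{eqn:basicrelation4}, which instead carries $\bpartial^0$ on the first factor and $\bpartial^l$ on the second, and I obtain it by a triangular linear system. Writing \eqref{eqn:basicrelation4} out with Proposition \ref{prop:dab} and the identity $\bpartial^b\bpartial^l=\binom{b+l}{l}\bpartial^{b+l}$ gives, at each fixed total weight $N$, the relations $\sum_{c\ge r}\binom{c}{r}H_{a,c}=0$ (with $a+c=N$) for $0\le r<i$, where $H_{a,c}$ is the same antisymmetrized product carrying derivatives $\bpartial^a,\bpartial^c$. For $N=m<i$ the coefficient matrix $\big(\binom{c}{r}\big)$ is unitriangular, hence invertible, forcing every $H_{a,c}=0$ at that weight; in particular $H_{m,0}=H_m=0$. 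This settles $h'=h$.

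The remaining and genuinely harder case is $h'<h$, where \eqref{eqn:relationLR} is unavailable because the first factor is a proper minor. I would handle it by induction on $h-h'$, using the Laplace-type relations \eqref{eqn:basicrelation2} and \eqref{eqn:baiscrelation3} to trade a row of the size-$h$ factor $(u_{\sigma(h)},\dots,u_{\sigma(1)}|$ against a single entry $x^{(0)}_{uv}$; each such exchange either enlarges the $X$-minor toward full size $h$, returning us to the settled case, or produces a strictly smaller configuration already covered by the inductive hypothesis. The main obstacle is the bookkeeping: one must carry out these exchanges compatibly with the full $S_{h+i}$-antisymmetrization so that no terms are lost, and one must track the binomial weights $\binom{a+b}{a}$ generated whenever normalized derivatives are composed, matching them against the coefficients produced at each Laplace step. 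An alternative to the induction is to run the exterior-algebra vanishing directly---expanding $a_u(t)=a_u^{(0)}+t(\cdots)$ and noting that any term using fewer than $i$ of the higher jet components is an antisymmetrization of more than $h$ order-zero vectors in $K^{\oplus h}$, hence zero---but the delicate point there is to realize this vanishing as membership in $\cI_h^+$ rather than as a mere identity of functions on $J_\infty(V)$, which is exactly the control that the standard monomial basis is being constructed to supply.
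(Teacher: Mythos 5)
Your case $h'=h$ is correct, and it is a genuinely different route from the paper's: you factor the full minor through \eqref{eqn:relationLR}, pull the $\sigma$-independent factor $\bpartial^{l-m}|v_1,\dots,v_h)$ out of the antisymmetrization, and recover the ``derivative-transposed'' relations $H_{m,0}\in\cI_h^+$, $m<i$, from the derivatives $\bpartial^n$ of the generators \eqref{eqn:basicrelation4} by inverting the system $\sum_{c\ge r}\binom{c}{r}H_{a,c}\equiv 0$; since that coefficient matrix is unitriangular, the inversion is over $\mathbb Z$ and the step is sound. The genuine gap is the case $h'<h$, which you do not prove: the proposed induction on $h-h'$ has no formulated inductive step (the mechanism by which \eqref{eqn:basicrelation2} or \eqref{eqn:baiscrelation3} would ``enlarge the $X$-minor'' is never specified, and it is not clear it exists), and you yourself flag the bookkeeping as unresolved; your exterior-algebra alternative is, as you also note, only an identity of functions on the arc space and does not give membership in $\cI_h^+$. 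This case cannot be discarded: the lemma is invoked in the proof of Lemma \ref{lemma:straighten} exactly for minors of size $h'\leq h$ (there $h\geq h'$), for instance when straightening $J_aJ_b$ with $sz(J_b)<h$, so as written the proposal establishes only half of the statement.

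What you missed is that no case distinction, and no use of \eqref{eqn:relationLR}, is needed; the paper's argument is uniform in $h'$ and very short. By Proposition \ref{prop:detexpression}, $\bpartial^l$ of the size-$h'$ minor is a sum of monomials whose row weights $n_1,\dots,n_{h'}$ satisfy $\sum_r n_r=l$; since $l<i$, every such monomial has at least one of the $i$ permuted rows $u_{\sigma(h+1)},\dots,u_{\sigma(h+i)}$ carrying weight $0$, i.e.\ contributing a factor $x^{(0)}_{u_{\sigma(h+m)}v_j}$. Grouping the monomials accordingly (say by the smallest such $m$, to avoid double counting) writes the first factor as $\sum_{m,j}x^{(0)}_{u_{\sigma(h+m)}v_j}f_{mj}$, where each $f_{mj}$ depends on $\sigma$ only through the values $\sigma(h+m')$ with $m'\neq m$. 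Now antisymmetrize over $\sigma$: fixing those values and summing with signs over the $h+1$ values occupying positions $1,\dots,h,h+m$ produces an integer multiple of the Pl\"ucker-type generator \eqref{eqn:basicrelation2} with $n=0$, multiplied by the fixed polynomial $f_{mj}$, hence an element of $\cI_h^+$; summing over the cosets and over $(m,j)$ annihilates the entire left-hand side of \eqref{eqn:basicrelation+4} for every $h'$ at once. (Note that the paper's own proof cites \eqref{eqn:basicrelation4} at this step, but the relation actually being used is \eqref{eqn:basicrelation2}; and \eqref{eqn:basicrelation+5} follows in the same way from \eqref{eqn:baiscrelation3}.)
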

\begin{proof}

	By Proposition \ref{prop:detexpression}, $$\bpartial^{l} (u'_{h'},\dots,u'_{i+1},u_{\sigma(h+i)},\dots,u_{\sigma(h+1)}|	v_1,\dots v_{h'})=\sum_{l=1}^i\sum_{j=1}^h x^{(0)}_{u_{\sigma(h+l)v_j}}f_{lj}.$$
	By Equation (\ref{eqn:basicrelation4}), $\sum_\sigma x^{(0)}_{u_{\sigma(h+l)v_j}}f_{lj} \bpartial^{0} (u_{\sigma(h)},\dots,u_{\sigma(1)}|=0$. So Equation (\ref{eqn:basicrelation+4}) is satisfied. The proof of Equation \eqref{eqn:basicrelation+5} is similar. \end{proof}
	
\begin{lemma}\label{lemma:straighten}
	The elements of $\cJ_h^+$ satisfy the following relations:
\begin{enumerate}
	\item
		\begin{equation}\label{eqn:relationLR+}
	\bpartial^n(u_h,\dots,u_2,u_1|v_1,v_2,\dots,v_h)=\sum_{k=0}^n\bpartial^k(u_h,\dots,u_2,u_1| \,\,\,\bpartial^{n-k}|v_1,v_2,\dots,v_h).
	\end{equation}
	\item \label{enum:relation4+}
	
	For $i_1,i_2,k_0,m,h,h'\in \mathbb Z_{\geq 0}$ with $k_0\leq m$ and $h\geq h'$, let $l_0=i_1+i_2-h-1$. For any integers $a_{k_0+l}$, $0\leq l\leq l_0$, there are integers $a_k$, $0\leq k<k_0$ or $k_0+l_0<l\leq m$, such that
	$$\sum_{k=0}^m a_k\sum_{\sigma} \frac{\sign(\sigma)}{i_1!i_2!}  \left(
	\begin{array}{cc}
	\bpartial^{m-k} (u_h,\dots,u_{i_1+1},\sigma(u_{i_1}),\dots,\sigma(u_1)|& \\
	\bpartial^{k} (u'_{h'},\dots,u'_{i_2+1},\sigma(u'_{i_2}),\dots,\sigma(u'_1)|&v'_1, v'_2,\dots,v'_{h'})
	\end{array}
	\right)
	=0;$$
	Here the second summation is over all permutations $\sigma$ of $u_{i_1},\dots,u_1,u'_{i_2},\dots,u'_1$.
	For simplicity, we write the above equation in the form:
	\begin{equation}\label{eqn:relation1+}
	\sum \epsilon a_{k} \left(
	\begin{array}{cc}
	\bpartial^{m-k} (u_h,\dots,u_{i_1+1},\underline{u_{i_1},\dots,u_1}|& \\
	\bpartial^{k} (u'_{h'},\dots,u'_{i_2+1},\underline{u'_{i_2},\dots,u'_1}|&v'_1, v'_2,\dots,v'_{h'})
	\end{array}
	\right)
	=0.   
	\end{equation}
	Similarly, we have the following relations:
	\begin{eqnarray}
	\label{eqn:relation2+}
	\sum \epsilon a_{k} \left(
	\begin{array}{ccc}
	\bpartial^{m-k} &&|\underline{v_1,\dots,v_{i_1}},v_{i_1+1},\dots,v_h)\\
	\bpartial^{k} (&u'_{h'},\dots,u'_2,u'_1&|\underline{v'_1,\dots, v'_{i_2}},v'_{i_2+1},\dots,v'_{h'})
	\end{array}
	\right)
	=0;
	\\
	\label{eqn:relation3+}
	\sum \epsilon a_{k} \left(
	\begin{array}{c}
	\bpartial^{m-k} (u_h,\dots,u_{i_1+1},\underline{u_{i_1},\dots,u_1}| \\
	\bpartial^{k} (u'_{h},\dots,u'_{i_2+1},\underline{u'_{i_2},\dots,u'_1}|
	\end{array}
	\right)
	=0;\\
	\label{eqn:relation4+}
	\sum \epsilon a_{k} \left(
	\begin{array}{c}
	\bpartial^{m-k} |\underline{v_1,\dots,v_{i_1}},v_{i_1+1},\dots,v_h)\\
	\bpartial^{k} |\underline{v'_1,\dots, v'_{i_2}},v'_{i_2+1},\dots,v'_{h})
	\end{array}
	\right)
	=0.
	\end{eqnarray}

	\item 
	For $i_1,i_2,j_1,j_2,r,h',k_0,m\in \mathbb Z_{\geq 0}$ with $r\geq h'$, $i_1,j_1\leq r$, $i_2,j_2\leq h'$ and $k_0\leq m$, let $l_0=i_1+i_2+j_1+j_2-2r-1$.  Given any integers  $a_{k}$, $k_0\leq k\leq k_0+l_0$, there are integers $a_k$, $0\leq k<k_0$ or $k_0+l_0<k\leq m$, such that 
	\begin{eqnarray}\label{eqn: relation4'}
	\sum_{k=0}^ma_k\sum_{\sigma,\sigma'} \frac{1}{i_1!i_2!j_1!j_2!}\sign(\sigma) \sign(\sigma')\hspace{5cm}&\\  \left(
	\begin{array}{ccc}
	\bpartial^{m-k} (u_r,\dots,u_{i_1+1},\sigma(u_{i_1}),\dots,\sigma(u_1)&|&\sigma'(v_1),\dots, \sigma'(v_{j_1}),v_{j_1+1},\dots,v_r)\\
	\bpartial^{k} (u'_{h'},\dots,u'_{i_2+1},\sigma(u'_{i_2}),\dots,\sigma(u'_1)&|&\sigma'(v'_1),\dots, \sigma'(v'_{j_2}),v'_{j_2+1},\dots,v'_{h'})
	\end{array}
	\right)&
	\in \R[r+1].\nonumber
	\end{eqnarray}
	Here the second summation is over all pairs of permutations $\sigma$ of $u_{i_1},\dots,u_1,u'_{i_2},\dots,u'_1$ and permutations $\sigma'$  of $v_{i_1},\dots,v_1,v'_{i_2},\dots,v'_1$, $\sign(\sigma)$ and $\sign(\sigma')$ are the signs of the permutations, and $\R[r+1]$ is the ideal of $\R$ generated by all $J\in\cJ$ with $sz(J)=r+1$.

\end{enumerate}
\end{lemma}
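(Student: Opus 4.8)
The plan is to prove the three parts in turn, bootstrapping from the defining relations \eqref{eqn:relationLR}--\eqref{eqn:basicrelation5} of $\cI_h^+$ together with the extended relations \eqref{eqn:basicrelation+4}--\eqref{eqn:basicrelation+5} of the previous lemma. Part (1) is immediate: the symbol $\bpartial^n(u_h,\dots,u_1|v_1,\dots,v_h)$ represents $\bpartial^n B$ by definition, so \eqref{eqn:relationLR} identifies it with $\bpartial^n\bigl(\bpartial^0(u_h,\dots,u_1|\,\bpartial^0|v_1,\dots,v_h)\bigr)$, and the Leibniz rule of Proposition \ref{prop:dab} expands this into the claimed sum, which is \eqref{eqn:relationLR+}.

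For part (2) I would first record the elementary identity $\bpartial^a\bpartial^b=\binom{a+b}{a}\bpartial^{a+b}$, a consequence of $\bpartial^k=\tfrac1{k!}\partial^k$. Applying $\bpartial^{m-l}$ to the defining relation \eqref{eqn:basicrelation4} of inner weight $l$ and expanding by Proposition \ref{prop:dab} then yields, for each $0\le l\le l_0$ (with $l_0=i_1+i_2-h-1$), a relation of the shape $\sum_{k=0}^m\binom kl\,T_k=0$, where $T_k$ is the antisymmetrized product appearing in \eqref{eqn:relation3+} at weight split $(m-k,k)$; the variants \eqref{eqn:relation1+} and \eqref{eqn:relation2+} follow the same way from \eqref{eqn:basicrelation+4} and \eqref{eqn:basicrelation+5}, after expanding the combined factor via Proposition \ref{prop:detexpression} into matrix entries $x^{(0)}_{uv}$ times pure symbols and invoking \eqref{eqn:basicrelation2}, \eqref{eqn:baiscrelation3}, exactly as in the previous lemma; the passage from the special antisymmetrization pattern of \eqref{eqn:basicrelation4} to a general pair $(i_1,i_2)$ is index bookkeeping. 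The substance of part (2) is that the prescribed middle coefficients $a_{k_0},\dots,a_{k_0+l_0}$ extend to a genuine relation. I would write a candidate as the integer combination $a_k=\sum_{l=0}^{l_0}c_l\binom kl$ of the base relations and solve $a_{k_0+s}=\sum_l c_l\binom{k_0+s}{l}$ for $0\le s\le l_0$. The key point is that the matrix $\bigl[\binom{k_0+s}{l}\bigr]_{s,l=0}^{l_0}$ is unimodular: since $\binom xl$ has degree $l$ with leading coefficient $1/l!$, column reduction turns its determinant into $\prod_l\tfrac1{l!}$ times the Vandermonde determinant on $\{k_0,\dots,k_0+l_0\}$, and $\prod_{0\le s<s'\le l_0}(s'-s)=\prod_{l=0}^{l_0}l!$, so the determinant equals $1$. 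Hence the $c_l$, and with them the outer coefficients $a_k=\sum_l c_l\binom kl$, are uniquely determined integers, and $\sum_k a_k T_k=\sum_l c_l\sum_k\binom kl T_k=0$.

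Part (3), the relation \eqref{eqn: relation4'}, is the genuinely new and hardest step. Here every factor is a combined symbol, so both sides live in $\R$ as honest normalized derivatives of $r\times r$ minors of $(x^{(k)}_{ij})$, and the claim is membership in the ideal $\R[r+1]$ rather than vanishing. I would antisymmetrize the two $\bpartial$-minors simultaneously on the row set $\{u_{i_1},\dots,u_1,u'_{i_2},\dots,u'_1\}$ and the column set $\{v_{j_1},\dots,v_1,v'_{j_2},\dots,v'_1\}$. Because the total number of antisymmetrized indices is $i_1+i_2+j_1+j_2=2r+1+l_0>2r$, the classical Plücker relations for the minors of a matrix rewrite such an over-antisymmetrized product of two $r\times r$ minors as a combination of products in which one factor is an $(r+1)\times(r+1)$ minor; differentiating with Proposition \ref{prop:dab} yields the same statement for the $\bpartial$-minors and places the whole combination in $\R[r+1]$. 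The coefficient flexibility is then handled exactly as in part (2) by the same unimodular matrix $\bigl[\binom{k_0+s}{l}\bigr]$, now with $l_0=i_1+i_2+j_1+j_2-2r-1$.

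The main obstacle is part (3): one must control the interaction of the simultaneous row and column antisymmetrizations, verify that every error term produced by the non-commutation of the left and right straightenings (ultimately traceable to \eqref{eqn:relationLR}) is absorbed into $\R[r+1]$, and keep the weight bookkeeping compatible with the binomial-coefficient structure so that the unimodularity argument still applies.
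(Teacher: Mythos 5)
Parts (1) and (2) of your proposal are correct and essentially coincide with the paper's own argument. For (2) the paper likewise produces, for each inner weight $0\le l\le l_0$, a relation with coefficient pattern $C_k^l$ -- it does this via the auxiliary antisymmetrized sums $\cF_l$ and an induction on $h-i_1$ whose base case is \eqref{eqn:basicrelation+4} (this induction is the precise form of what you dismiss as ``index bookkeeping''), and then inverts the integer matrix $(C_{k_0+j}^l)$. Your Vandermonde computation showing that this matrix has determinant $1$ is a welcome supplement, since the paper only asserts it.

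Part (3) contains a genuine gap. Note first that the paper does not prove \eqref{eqn: relation4'} here at all: it quotes it as Lemma 2.4 of \cite{LS1}. Your replacement sketch produces too few relations for the unimodularity step to run. The classical over-antisymmetrization identity you invoke lives at weight zero: it expresses $R_0=\sum_{\sigma,\sigma'}\pm F_{\sigma,\sigma'}G_{\sigma,\sigma'}$ as an explicit combination of products containing an $(r+1)\times(r+1)$ minor. Differentiating this identity by $\bpartial^m$ (using Proposition \ref{prop:dab} and the fact that $\R[r+1]$ is a differential ideal) yields only the single coefficient family $\sum_k T_k\in\R[r+1]$, i.e.\ the pattern $C_k^0$. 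The matrix argument needs the $l_0+1$ independent families $\sum_k C_k^l T_k$ for $0\le l\le l_0$, and these require base relations at inner weight splits $(0,l)$ for every $l\le l_0$; such relations are not derivatives of $R_0$, because differentiation can never separate the weight splits. In part (2) the analogous base relations cost nothing -- they are consequences of the defining relations of $\cI_h^+$, so they hold in $\R_h^+$ by construction. In part (3) they are a nontrivial theorem about the free ring $\R$, and that theorem is exactly the content of the cited lemma.

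Moreover, the natural shortcut for producing those missing families fails for a reason that is central to this whole paper. Each split relation does lie in the kernel of the substitution $x^{(k)}_{ij}\mapsto\bpartial^k\sum_l a_{il}b_{jl}$: by Cauchy--Binet, $i_1+i_2+j_1+j_2>2r$ forces $i_1+i_2>r$ or $j_1+j_2>r$, so one of the two Grassmann--Pl\"ucker factors vanishes. At weight zero, kernel membership implies membership in the determinantal ideal because of the second fundamental theorem (injectivity of $\mathbb Z[x_{ij}]/I_{r+1}\to\mathbb Z[a,b]$); at positive weight this equivalence breaks down, and the discrepancy between the kernel and the differential ideal $\R[r+1]$ is precisely the source of the nilpotents discussed after Theorem \ref{thm:JSLinvariant+} (compare the relations \eqref{eqn:relationinvariant2} with $l>0$). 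This is also why the bound $l_0=i_1+i_2+j_1+j_2-2r-1$ appears: beyond it the split relations in general lie in the kernel but not in $\R[r+1]$. Your sketch neither produces the $l>0$ families nor accounts for this bound; to complete part (3) you must either reproduce the inductive argument of Lemma 2.4 of \cite{LS1} or cite it, as the paper does.
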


\begin{remark}
		Since the summation is over all permutations of the underlined elements,  each monomial appear in Equations \eqref{eqn:relation1+}--\eqref{eqn:relation4+} will appear $i_1!i_2!$ times, so the coefficient of each monomial will be $\pm a_k$.
	Since the second summation in Equation (\ref{eqn: relation4'}) is over all pairs of permutations $\sigma$ and $\sigma'$, each monomial in the equation will appear $i_1!i_2!j_1!j_2!$ times, so the coefficient of each monomial will be $\pm a_k$.	
\end{remark}

\begin{proof}
	The statement (1) is directly induced from Equation (\ref{eqn:relationLR}) and Proposition \ref{prop:dab}.
	
	Let $\cF_l(u_h,\dots,u_{i_1+1}; u'_{h'},\dots,u'_{i_2+1})$
	$$
	=
	\sum_{\sigma} \frac{\sign(\sigma)}{i_1!i_2!}  \left(
	\begin{array}{cc}
	\bpartial^{(0)} (u_h,\dots,u_{i_1+1},\sigma(u_{i_1}),\dots,\sigma(u_1)|& \\
	\bpartial^{l} (u'_{h'},\dots,u'_{i_2+1},\sigma(u'_{i_2}),\dots,\sigma(u'_1)|&v'_1, v'_2,\dots,v'_{h'})
	\end{array}
	\right).
	$$
	We have
	$\cF_l(u_h,\dots,u_{i_1}; u'_{h'},\dots,u'_{i_2+1})$
	$$=\cF_l(u_h,\dots,u_{i_1+1};  u'_{h'},\dots,u'_{i_2+1})\pm\cF_l(u_h,\dots,u_{i_1+1}; u'_{h'},\dots,u'_{i_2+1},u_{i_1}).$$
	If $i_1=h$ and $0\leq l\leq l_0$, by Equation (\ref{eqn:basicrelation+4}), $\cF_l(\,\,; u'_{h'},\dots,u'_{i_2+1})=0$.
	By induction on $h-i_1$, we can see that $\cF_l(u_h,\dots,u_{i_1+1}; u'_{h'},\dots,u'_{i_2+1})=0.$
	Thus 
	$$0=\bpartial^{m-l}\cF_l(u_h,\dots,u_{i_1+1}; u'_{h'},\dots,u'_{i_2+1})$$
	$$=\sum_{k=0}^m C_k^l\sum_{\sigma} \frac{\sign(\sigma)}{i_1!i_2!}  \left(
	\begin{array}{cc}
	\bpartial^{m-k} (u_h,\dots,u_{i_1+1},\sigma(u_{i_1}),\dots,\sigma(u_1)|& \\
	\bpartial^{k} (u'_{h'},\dots,u'_{i_2+1},\sigma(u'_{i_2}),\dots,\sigma(u'_1)|&v'_1, v'_2,\dots,v'_{h'})
	\end{array}
	\right) $$
		Now the $(l_0+1)\times (l_0+1)$ integer matrix with entries $c_{ji}=C_{k_0+j}^i$, $0\leq i,j\leq l_0$ is invertible since the determinant of this matrix is $\pm1$. Let $b_{ij} \in \mathbb{Z}$ be the entries of the inverse matrix. Let $a_{k}= \sum_{l=0}^{l_0}\sum_{j=0}^{l_0}C_{k}^l b_{l,j}a_{k_0+j}$. 
	So 
	$$0=\sum_{l=0}^{l_0}\sum_{j=0}^{l_0} b_{l,j}a_{k_0+j}\bpartial^{m-l}\cF_l(u_h,\dots,u_{i_1+1}; u'_{h'},\dots,u'_{i_2+1})=\text{right hand side of Equation (\ref{eqn:relation1+})}
	$$
	
	The proofs for Equations (\ref{eqn:relation2+}, \ref{eqn:relation3+}, \ref{eqn:relation4+})
are similar. Equation (\ref{eqn: relation4'}) is Lemma 2.4 of \cite{LS1}.
\end{proof}

\subsection{Standard monomials} Now we give a definition of the standard monomials of $\cJ_h^+$. Here $\cE(J)$ for $J\in \tilde \cJ_h^+$, is given by \eqref{def:E(J)}.
\begin{defn}
	An ordered product $
	E^L_{1}E^L_{2}\cdots E^L_{m}E^R_1\cdots E^R_n E_1\cdots E_t$ of elements of $\cE_h^+$ is said to be standard
	if 
	\begin{enumerate}
		\item
		$E^L_i\in \cE_h^L$,  $E^R_i\in  \cE_h^R$, $E_i\in \cE_{h-1}$;
		\item  for $1\leq a<m$, $E^L_a\leq E^L_{a+1}$ and $E^L_{a+1}$ is the largest element in $$\{E\in\cE(||E^L_{a+1}||) | E^L_a\leq E\};$$
		\item  for $1\leq a<n$, $E^R_a\leq E^R_{a+1}$ and $E^R_{a+1}$ is the largest element in $$\{E\in\cE(||E^R_{a+1}||) | E^R_a\leq E\};$$
		\item  for $1\leq a<t$, $E_a\leq E_{a+1}$ and $E_{a+1}$ is the largest element in $$\{E\in\cE(||E_{a+1}||) | E_a\leq E\};$$
		\item 
		$E^L_{1}$ and $E^R_1$ are the largest elements in $\cE(||E^L_{1}||)$ and $\cE(||E^R_1||)$ respectively.
		$E_1$ is the largest element in
		 $$\{E\in \cE(||E_{1}||)| E^L_m\leq E \text{ if } m\geq 1 \text{ and } E^n_l\leq E \text{ if } n\geq 1\}.$$
	
	\end{enumerate}
	An ordered product $J_1J_2\cdots J_k$ of elements of $\cJ_h^+$ is said to be standard
	if there is a standard ordered product $E_{1}E_{2}\cdots E_{k}$ such that $E_i\in \cE_h^+(J_i)$.
\end{defn}
\begin{remark}\label{rem:LRcompare}
	Let $E_0^L=((h,0),\dots, (2,0),(1,0)|$ and $E^R_0=|(1,0),(2,0),\dots,(h,0)$. Let $E=F(E_m^L,E_n^R)$, then the condition " $E^L_m\leq E_1$ if $m\geq 1$ and $E^n_l\leq E_1$ if $n\geq 1$", is equivalent to "$E\leq E_1$".
\end{remark}
Let $\cS\cM(\cJ_h^+)\subset \cM(\cJ_h^+)$ be the set of standard monomials of $\cJ_h^+$. 
Let $\cS\cM(\cE_h^+)\subset \cM(\cE_h^+)$ be the set of standard monomials of $\cE_h^+$.

By definition, the standard monomial $E^L_{1}E^L_{2}\cdots E^L_{m}E^R_1\cdots E^R_n E_1\cdots E_{k-m-n}\in \cS\cM(\cE_h^+)$ corresponding to  the standard monomial $J_1J_2\cdots J_k\in \cS\cM(\cJ_h^+)$ is unique and  $E^L_1$ (if $m=0$, left part of $E_1$) has the form $((u_{h'}, k),(u_{h'-1},0),\dots,(u_1,0)|$ and $E^R_1$ (if $n=0$, the right part of $E_1$) has the form $|(v_1,0),\dots,(v_{h'-1},0),(v_{h'},l))$ (if $n=0$, $l=0$) with $u_i<u_{i+1}$ and $v_i<v_{i+1}$. So the map
$$\pi_h^+: \cS\cM(\cE_h^+) \to \cS\cM(\cJ^+_h),\quad E_1E_2\cdots E_m\mapsto ||E_1|| ||E_2|| \cdots ||E_m||$$
is a bijection.

The following lemma will be proved later in Section \ref{sec:base}.
\begin{lemma}\label{lem:base20}
	If $J = J_1\cdots J_b\in\cM(\cJ_h^+)$,  is not standard, $J$ can be written as a linear combination of $\cM(\cJ_h^+)$ preceding $J_1\cdots J_{b-1}$ with integer coefficients.
\end{lemma}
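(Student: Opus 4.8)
The plan is to prove Lemma~\ref{lem:base20} by descending induction in the order $\prec$, restricted to the finite set of monomials of a fixed total weight and a fixed combined multiset of row indices $u$ and column indices $v$; all of the relations used below preserve these data, and since left and right factors have size exactly $h$ while mixed factors have positive size, each monomial in this set has boundedly many factors, so the set is finite and $\prec$ is well-founded on it. After commuting the factors into the normal order, I lift $J=J_1\cdots J_b$ to the $\cE$-picture by building representatives greedily from the left: take $E_1$ maximal in $\cE(J_1)$, and at each step the maximal $E_{a+1}\in\cE(J_{a+1})$ compatible with $E_a$ under $\le$, mirroring the construction behind $\pi_h^+$. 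Because $J$ is not standard, this greedy lift must break down: reading left to right, I fix the first position at which either a compatible maximal representative fails to exist or a boundary clause is violated. This isolates a $\le$-incomparable consecutive pair of factors of a single type, or a boundary incompatibility between $F(E^L_m,E^R_n)$ and $E_1$ in the sense of Remark~\ref{rem:LRcompare}.

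To each kind of breakdown I attach a relation from Lemma~\ref{lemma:straighten}. An incomparable pair of left factors is straightened by \eqref{eqn:relation3+}, a pair of right factors by \eqref{eqn:relation4+}, and a pair of mixed factors by \eqref{eqn: relation4'}; a boundary incompatibility is handled by first using the factorization \eqref{eqn:relationLR+} to view $E^L_m E^R_n$ as a single size-$h$ minor and then straightening it against the mixed factor $E_1$ through \eqref{eqn:relation1+} and \eqref{eqn:relation2+}. The weight freedom in parts (2) and (3) of Lemma~\ref{lemma:straighten} is essential here: the invertible binomial matrix produced there lets me choose the integers $a_k$ so that the relation is supported exactly where I need it and the distinguished non-standard monomial occurs with coefficient $\pm1$; by the remark following Lemma~\ref{lemma:straighten}, every monomial in the relation then appears with an integer coefficient $\pm a_k$. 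Solving for the distinguished monomial rewrites $J$ as an integer combination of the remaining ones, with the factors before the isolated pair left untouched.

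The crux is the ordering estimate. The distinguished non-standard monomial is the $\prec$-leading term of its relation, and since the pool of indices and the total weight carried by the straightened pair are fixed, the second member of any term is determined by its first; therefore distinct terms have distinct first members, and every remaining term has a strictly $\prec$-smaller first member at the straightened slot. By Lemma~\ref{lemma:compareEE} a strict drop of the $\cE$-data forces a strict drop of the corresponding $\cJ_h^+$-factor, so each resulting monomial agrees with $J$ up to that slot and is strictly smaller there. For \eqref{eqn: relation4'} the residual terms additionally lie in $\R[r+1]$, i.e.\ have strictly larger size and hence are strictly smaller in $\prec$, and the size-$h$ minors that appear are split by \eqref{eqn:relationLR+} into left and right factors, which the type ordering (left $\prec$ right $\prec$ mixed) sends to an earlier slot, lowering the monomial further. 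Since a straightening always acts on the first member of a consecutive pair, or at the left/right--mixed boundary, it alters a factor at a position at most $b-1$; hence every resulting monomial precedes $J_1\cdots J_{b-1}$, and the induction closes.

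I expect the principal obstacle to be securing the sharp bound, namely that the change lands at a position at most $b-1$ and is a strict decrease there. The danger is a residual term that coincides with $J$ except in the \emph{second} member of the straightened pair; were that pair the last two factors, such a term would agree with $J_1\cdots J_{b-1}$ and merely lengthen it, which the prefix clause of $\prec$ makes \emph{larger}. Excluding this is exactly the ``distinct first members'' observation, which in turn rests on the maximality clauses in the definition of standard and on the weight bookkeeping that is new to the arc-space setting: one must follow the weights through each factorization $\bpartial^{m-k}\otimes\bpartial^{k}$ of Proposition~\ref{prop:dab} and check that the freedom in the $a_k$ always realizes the non-standard monomial as the leading term with unit coefficient. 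This weight accounting, together with its interplay with the size-raising terms of \eqref{eqn: relation4'}, is where I expect the real work to lie.
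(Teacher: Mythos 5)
Your overall architecture --- lift to the $\cE$-picture, locate a breakdown, straighten it with Lemma \ref{lemma:straighten}, and induct on $\prec$ --- matches the paper's in outline, but the step you yourself call the crux contains a genuine gap, and it is not repairable within your scheme. Your claim is that in each straightening relation every surviving term other than the distinguished one has a strictly $\prec$-smaller first member at the straightened slot; you deduce this from ``the second member is determined by the first, so distinct terms have distinct first members.'' Distinctness does not give smallness, and the claim itself is false. In the arc-space setting, the failure of $J_1\cdots J_b$ to be standard (with $J_1\cdots J_{b-1}$ standard) is a \emph{weighted} condition: it says $J_b$ is not greater than the greedy representative $E_{b-1}$, which by Lemma \ref{lemma:critgreat} is an inequality between weight differences and the numbers $L,R$ --- it is not a raw comparison of index sequences. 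Consequently, when relation \eqref{eqn:relation1+} (or \eqref{eqn:relation3+}, etc.) is applied to $J_{b-1}J_b$, the indices $u^b_j$, $j\leq i_1$, that get traded into the $J_{b-1}$-factor are guaranteed smaller only than $u'_{i_2}$, not smaller than the indices they displace; the resulting factor of the same weight as $J_{b-1}$ can perfectly well be $\succ J_{b-1}$. These are exactly the terms denoted $K_0$ and $K_1$ in the paper's proof, and the paper never claims they precede $J_{b-1}$.

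What the paper does instead --- and what is entirely missing from your proposal --- is a second disposal mechanism fused with an induction on the number of factors $b$. After reducing (via Lemma \ref{lemma:order2} and induction) to the situation where both $J_1\cdots J_{b-1}$ and $J_1\cdots J_{b-2}J_b$ are standard, it proves the dichotomy $(*)$: $J_{b-1}J_b=\sum K_if_i$ where each $K_i$ either precedes $J_{b-1}$, or else $J_1\cdots J_{b-2}K_i$ is \emph{not standard}; terms of the second kind are then rewritten by the induction hypothesis applied to the shorter word $J_1\cdots J_{b-2}K_i$, landing in $\R_h^+(J_1\cdots J_{b-2})$. Proving that the troublesome terms fall into the second class is where the quantitative work lies: one needs estimates such as $L(E_{b-2},K_0')>n_b-m_{b-2}$, which rest on Corollaries \ref{cor:lrnumber1} and \ref{cor:lrnumber2} and Lemmas \ref{lemma:replace} and \ref{lem:lrnumber}, i.e.\ weight bookkeeping against the greedy representative $E_{b-2}$ of the \emph{prefix}, not of the straightened pair. (This is also why the paper straightens the last pair $J_{b-1}J_b$ rather than your first breakdown: the estimates need $E_{b-2}$ together with the standardness of $J_1\cdots J_{b-2}J_b$.) Your picture is essentially correct for $b=2$ --- there the residual terms of the relation do all precede $J_1$, which is Lemma \ref{lemma:case1+} and the paper's $b=2$ analysis, and is likely why the claim looks plausible --- but for $b>2$ the one-shot leading-term argument cannot close, and no choice of the coefficients $a_k$ fixes it.
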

Since $\R_h^+$ is generated by $\cJ_h^+$,
by the above lemma and a standard procedure (for example, proof of theorem 4.3.3.1 in \cite{LR}), we immediately have
\begin{lemma}\label{lem:base21}
	Any element of $\R^+_h$ can be written as a linear combination of standard ordered products of elements of $\cJ^+_h$ with integer coefficients.
\end{lemma}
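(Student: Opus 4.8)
The plan is to deduce this lemma from Lemma \ref{lem:base20} by a descending induction on the lexicographic order $\prec$ on $\cM(\cJ_h^+)$. Since $\R^+_h$ is generated as a $\mathbb Z$-algebra by $\cJ_h^+$, every element is a $\mathbb Z$-linear combination of ordered monomials in $\cM(\cJ_h^+)$, so it suffices to prove that each individual monomial $J=J_1\cdots J_b$ is a $\mathbb Z$-linear combination of standard monomials. If $J$ is already standard there is nothing to do. If it is not, Lemma \ref{lem:base20} rewrites it as a $\mathbb Z$-linear combination of monomials each strictly preceding $J_1\cdots J_{b-1}$, hence strictly preceding $J$ itself, since a proper prefix precedes the full word in the lexicographic order (take $i_0=b$ and $n>m$ in the definition of $\prec$). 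Thus each rewriting step replaces a non-standard monomial by strictly $\prec$-smaller ones, and the only real point to settle is that this process terminates.

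To guarantee termination I would equip $\R^+_h$ with the bigrading in which $x^{(k)}_{ij}$ has degree $2$ and weight $k$, and each element of $\cJ_h^L\cup\cJ_h^R$ of the form $\bpartial^k(\cdots|$ or $\bpartial^k|\cdots)$ has degree $h$ and weight $k$; consequently an element of $\cJ_{h-1}$ representing $\bpartial^k$ of a size-$r$ minor acquires degree $2r$ and weight $k$, and every $\bpartial^k$ raises weight by $k$ while preserving degree. The key structural fact is that all of the defining relations \eqref{eqn:relationLR}--\eqref{eqn:basicrelation5} of $\cI_h^+$ are bihomogeneous for this grading, so $\cI_h^+$ is a bihomogeneous ideal and $\R^+_h$ is bigraded. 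Hence any element splits into finitely many bihomogeneous components, and it suffices to treat a single $\mathbb Z$-combination of monomials all of one fixed bidegree $(d,w)$.

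The finiteness observation that drives the induction is that for fixed $(d,w)$ there are only finitely many monomials of that bidegree in $\cM(\cJ_h^+)$: each factor has degree $\geq 1$, so a bidegree-$(d,w)$ monomial has at most $d$ factors, and each factor has weight at most $w$, size at most $h$, and indices bounded by $p$ and $q$, leaving only finitely many possible factors. Thus the bidegree-$(d,w)$ monomials form a finite set, totally ordered by $\prec$, hence well-ordered, and I would run induction on $\prec$ over this finite set. Given a non-standard monomial $M$ of bidegree $(d,w)$, Lemma \ref{lem:base20} yields an identity $M=\sum_i c_i N_i$ in $\R^+_h$ with every $N_i\prec M$; projecting both sides onto the bidegree-$(d,w)$ component (legitimate because $\cI_h^+$ is bihomogeneous and $M$ is itself bihomogeneous) retains only the $N_i$ of bidegree $(d,w)$, each still satisfying $N_i\prec M$. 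By the induction hypothesis each surviving $N_i$ is a $\mathbb Z$-combination of standard monomials, and therefore so is $M$.

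Since Lemma \ref{lem:base20} carries all of the combinatorial content, the only genuine obstacle in the present lemma is establishing that $\prec$ is well-founded on the relevant monomials; the lexicographic order on words over an infinite ordered alphabet is not well-founded in general, and this is exactly why the bigrading together with the per-bidegree finiteness is needed. Once that is in place, the statement follows as the short formal consequence of Lemma \ref{lem:base20} that the surrounding text asserts.
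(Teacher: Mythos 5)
Your proof is correct, and its core is the same as the paper's: reduce to monomials, apply Lemma \ref{lem:base20} to rewrite a non-standard monomial as an integer combination of strictly $\prec$-smaller monomials, and induct along $\prec$. Where you genuinely add something is the termination step. The paper's proof is a minimal-counterexample argument --- ``take the smallest monomial in $\cM(\cJ_h^+)$ that is not a combination of standard monomials'' --- and this silently assumes that $\prec$ is well-founded on the set of counterexamples, which is not automatic: with $A\prec B$ in $\cJ_h^+$ one has the infinite descending chain $B\succ AB\succ AAB\succ\cdots$, so the lexicographic order on $\cM(\cJ_h^+)$ is not well-founded (note this failure already occurs for a two-letter alphabet; the culprit is unbounded word length rather than the infiniteness of $\cJ_h^+$). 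Your fix --- observe that the generators \eqref{eqn:relationLR}--\eqref{eqn:basicrelation5} of $\cI_h^+$ are bihomogeneous for the (degree, weight) grading, so $\cI_h^+$ is a bihomogeneous ideal, the rewriting identity of Lemma \ref{lem:base20} can be projected onto a single bidegree, and each bidegree contains only finitely many monomials of $\cM(\cJ_h^+)$ --- is exactly the justification the paper omits, and all three ingredients (bihomogeneity of the relations, legitimacy of the projection, finiteness per bidegree) check out. So your argument is the paper's induction made complete; the paper's version is shorter but takes the existence of a smallest counterexample for granted.
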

\begin{proof}
If the lemma is not true,
	there must be a smallest element $J\in \cM(\cJ_h^+)$, which cannot be written as a linear combination of  elements of $\cS\cM(\cJ_h^+)$ with integer coefficients. So $J$ is not standard. By Lemma \ref{lem:base20},
	$J=\sum_{\alpha}\pm  J_{\alpha}$ with $J_{\alpha}\in \cM(\cJ_h^+)$ and $J_{\alpha}\prec J$. By assumption, $J_{\alpha}$ can be written as a linear combination of elements of $\cS\cM(\cJ_h^+)$ with integer coefficients. Thus
	$J$ can be written as a linear combination of elements of $\cS\cM(\cJ_h^+)$ with integer coefficients, which is a contradiction.
\end{proof}

\section{A canonical basis} \label{sec:canonicalbasis} 
In this section, we will prove that the set of standard monomials $\cS\cM(\cJ_h^+)$ is linearly independent. Combined with Lemma \ref{lem:base21}, this shows that $\cS\cM(\cJ_h^+)$ is a $\mathbb{Z}$-basis of $\R^+_h$. The idea of the proof is to construct a ring homomorphism $Q^+_h: \R^+_h \rightarrow \B$, where $\B$ is a certain polynomial ring over $\mathbb{Z}$. By representing the monomials of $\B$ in terms of double tableaux and giving them a total ordering, we will see that for each standard monomial in $\cS\cM(\cJ_h^+)$, the leading term of its image in $\B$ is unique.

Let $$\cS_h=\{a^{(k)}_{il},b^{(k)}_{jl}|\ 1\leq i\leq p, \ 1\leq j\leq q, \ 1\leq l\leq h, \ k\in \Zplus \},$$ and let
\begin{equation} \label{def:ringB} \B = \mathbb{Z}[\cS_h]. \end{equation} Note that for a field $K$, if $W = K^{\oplus h}$ and $V = W^{\oplus p} \bigoplus {W^*}^{\oplus q}$, the affine coordinate ring $K[J_{\infty}(V)]$ is just $\B \otimes_{\mathbb{Z}}K$. There is a derivation $\partial$ on $\B$ defined on generators by
 $\partial a^{(k)}_{ij}=(k+1)a^{(k+1)}_{ij}$ and $\partial b^{(k)}_{ij}=(k+1)b^{(k+1)}_{ij}$. As above, $\bpartial^k = \frac{1}{k!}\partial^k: \B\to \B$ is well-defined.
 
We have a homomorphism of rings
$\tilde Q_h^+:\R[\cJ_h^L\cup\cJ_h^R]\to \B$  given by 
\begin{equation}\label{eqn:eleRL} \tilde Q_h^+(x^{(k)}_{ij})=\bpartial^kX^{(0)}_{ij},
\end{equation}
\begin{equation}\label{eqn:eleJL}
\tilde Q_h^+(\bpartial^k(u_h,\dots,u_1|)=
\bpartial^k Y^{(0)}_{u_1,\dots,u_h},
\end{equation}
\begin{equation}\label{eqn:eleJR}
\tilde Q_h^+(\bpartial^k|v_1,\dots,v_h))=
\bpartial^k Z^{(0)}_{v_1,\dots,v_h},
\end{equation}
where $X^{(0)}_{ij}$, $Y^{(0)}_{u_1,\dots,u_h}$, and $Z^{(0)}_{v_1,\dots,v_h}$ are given by \eqref{eqn:relation0}.
\begin{lemma}
	\label{lem:hom+}
	$\partial \tilde Q_h^+=\tilde Q_h^+\partial$ and
	$\cI^+_h \subset \ker(\tilde Q_h^+)$.
\end{lemma}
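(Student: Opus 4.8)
The plan is to prove the two assertions in turn, reducing the ideal containment $\cI_h^+\subset\ker(\tilde Q_h^+)$ to the finitely many ``basic'' ($n=0$) relations by exploiting the commutation with $\partial$. For the commutation itself, since $\tilde Q_h^+$ is a ring homomorphism and $\partial$ is a derivation, it suffices to check $\partial\tilde Q_h^+=\tilde Q_h^+\partial$ on the algebra generators $x^{(k)}_{ij}$, $\bpartial^k(u_h,\dots,u_1|$ and $\bpartial^k|v_1,\dots,v_h)$. On each of these $\partial$ acts by $\bpartial^k\mapsto (k+1)\bpartial^{k+1}$ on both sides (using $\partial\bpartial^k=(k+1)\bpartial^{k+1}$, which holds in both $\R^+$ and $\B$), so the two composites agree by inspection of \eqref{eqn:eleRL}--\eqref{eqn:eleJR}. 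Iterating gives $\partial^n\tilde Q_h^+=\tilde Q_h^+\partial^n$; since $\B=\mathbb Z[\cS_h]$ is torsion-free, the identity $n!\,\tilde Q_h^+(\bpartial^n a)=\tilde Q_h^+(\partial^n a)=\partial^n\tilde Q_h^+(a)=n!\,\bpartial^n\tilde Q_h^+(a)$ may be divided by $n!$, yielding $\tilde Q_h^+\bpartial^n=\bpartial^n\tilde Q_h^+$. As every generator of $\cI_h^+$ in \eqref{eqn:relationLR}--\eqref{eqn:basicrelation5} is $\bpartial^n$ of its $n=0$ instance $r$, we get $\tilde Q_h^+(\bpartial^n r)=\bpartial^n\tilde Q_h^+(r)$, so it is enough to show each $n=0$ basic relation lies in $\ker(\tilde Q_h^+)$.

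\emph{The easy basic relations.} Under $\tilde Q_h^+$ the $n=0$ case of \eqref{eqn:relationLR} becomes $\det(X^{(0)}_{u_av_b})-Y^{(0)}_{u_1,\dots,u_h}Z^{(0)}_{v_1,\dots,v_h}$, which vanishes by Cauchy--Binet, writing the $X$-minor as $A B^{\top}$ with $A=(a^{(0)}_{u_a l})$, $B=(b^{(0)}_{v_b l})$; this is exactly relation \eqref{eqn:relation0}. Likewise \eqref{eqn:basicrelation2} and \eqref{eqn:baiscrelation3} map to relation \eqref{eqn:relation1}, and these vanish because $\sum_i(-1)^iX^{(0)}_{u_iv}Y^{(0)}_{u_0,\dots,\widehat{u_i},\dots,u_h}=\sum_l b^{(0)}_{vl}\sum_i(-1)^i a^{(0)}_{u_il}Y^{(0)}_{u_0,\dots,\widehat{u_i},\dots,u_h}$, and the inner sum is the Laplace expansion of an $(h+1)\times(h+1)$ determinant with two equal columns (the new column indexed by $l\in\{1,\dots,h\}$ repeats an existing one). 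All of these are universal determinantal identities and in particular hold in $\B$, as recorded over $\mathbb Z$ by Theorem \ref{main:classical}.

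\emph{The main obstacle: \eqref{eqn:basicrelation4}--\eqref{eqn:basicrelation5} for $l>0$.} Here the image is a single antisymmetrized product $\bpartial^0Y\cdot\bpartial^lY$ that is \emph{not} a consequence of the classical second fundamental theorem, and I would treat it through a generating function. Put $\mathbf a_u(t)=\big(\sum_k a^{(k)}_{u1}t^k,\dots,\sum_k a^{(k)}_{uh}t^k\big)\in\B[[t]]^{h}$; by Proposition \ref{prop:detexpression}, $\bpartial^kY^{(0)}_{\vec u}$ is the coefficient of $t^k$ in $\det\big(\mathbf a_{u_1}(t),\dots,\mathbf a_{u_h}(t)\big)$. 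Writing $\Delta_0(\sigma)=\det\big(\mathbf a_{u'_h}(0),\dots,\mathbf a_{u'_{i+1}}(0),\mathbf a_{u_{\sigma(h+i)}}(0),\dots,\mathbf a_{u_{\sigma(h+1)}}(0)\big)$ for the level-$0$ minor built from the fixed rows $u'$ and the last $i$ entries of $\sigma$, and $\Delta_t(\sigma)=\det\big(\mathbf a_{u_{\sigma(h)}}(t),\dots,\mathbf a_{u_{\sigma(1)}}(t)\big)$, the image $\tilde Q_h^+$ of the $n=0$ instance of \eqref{eqn:basicrelation4} is, up to an overall sign, the coefficient of $t^l$ in
$$G(t)=\sum_{\sigma\in S_{h+i}}\frac{\sign(\sigma)}{h!\,i!}\,\Delta_0(\sigma)\,\Delta_t(\sigma).$$

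\emph{Why this is the crux.} The claim is that $G(t)$ is divisible by $t^{i}$, which kills the coefficients of $t^l$ for all $0\le l<i$ simultaneously. To see it, write $\mathbf a_u(t)=\mathbf a_u(0)+t\,\mathbf b_u(t)$ and expand $\Delta_t(\sigma)$ multilinearly in its rows. A term using the perturbation $\mathbf b$ in exactly $m$ of the $h$ rows of $\Delta_t$ retains $h-m$ of the level-$0$ vectors $\mathbf a_\bullet(0)$ there, together with the $i$ level-$0$ vectors $\mathbf a_{u_{\sigma(h+1)}}(0),\dots,\mathbf a_{u_{\sigma(h+i)}}(0)$ in $\Delta_0$; this is a total of $h+i-m$ vectors indexed by the permutable symbols $u_1,\dots,u_{h+i}$. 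The alternating sum over $\sigma$ distributes these $h+i-m$ vectors between the two maximal minors, and this distribution vanishes whenever $h+i-m>h$, i.e. whenever $m<i$, by the Grassmann--Plücker relation (the antisymmetrization factors through $\Lambda^{h+i-m}=0$ of the rank-$h$ module $\B[[t]]^{h}$). Hence only $m\ge i$ contributes and $t^i\mid G(t)$; the case $l=0$ recovers the classical Plücker relation \eqref{eqn:relation2}. Relation \eqref{eqn:basicrelation5} is handled identically with the rows $\mathbf a_u$ replaced by the columns $\big(b^{(\bullet)}_{vl}\big)_l$. I expect this divisibility-by-$t^i$ step to be the heart of the argument, the other verifications being the standard determinantal identities above.
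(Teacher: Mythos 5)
Your proposal is correct and follows essentially the same route as the paper's proof: commutation of $\tilde Q_h^+$ with $\partial$ (hence with every $\bpartial^n$, using torsion-freeness over $\mathbb Z$) reduces everything to the $n=0$ relations; the relations \eqref{eqn:relationLR}--\eqref{eqn:baiscrelation3} are the classical Cauchy--Binet and repeated-column Laplace identities; and for \eqref{eqn:basicrelation4}--\eqref{eqn:basicrelation5} both arguments rest on the same weight count, namely that $l<i$ forces more than $h$ level-zero rows among the antisymmetrized indices, whose alternation vanishes in the rank-$h$ module. Your generating-function packaging of this last step (divisibility of $G(t)$ by $t^i$ via the vanishing of $\Lambda^{>h}$) is a clean, self-contained restatement of what the paper compresses into Proposition \ref{prop:detexpression} together with the cited identity $\sum_{i}(-1)^i\,\tilde Q_h^+(\bpartial^0(u_h,\dots,u_{i+1},u_{i-1},\dots,u_0|)\,a^{(0)}_{u_i j}=0$ and the reference to the proof of \eqref{eqn:basicrelation+4}.
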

\begin{proof} The first statement is obvious from the definition of $\tilde Q_h^+$. For the second statement, we only need to show $\tilde Q_h^+$ maps the right hand sides of Equations (\ref{eqn:relationLR}-\ref{eqn:basicrelation5}) to zero when $n=0$.
This is obvious for Equations (\ref{eqn:relationLR}-\ref{eqn:baiscrelation3}). The proof for Equations (\ref{eqn:basicrelation4}) and (\ref{eqn:basicrelation5}) is similar to the proof of Equation (\ref{eqn:basicrelation+4}), since 
$$\sum_{i=0}^h(-1)^i\tilde Q_h^+(\bpartial^0(u_h,\dots,u_{i+1},u_{i-1},\dots,u_0)) a_{u_i,j}=0.$$	
\end{proof}
By Lemma \ref{lem:hom+}, $\tilde Q_h^+$ induces a ring homomorphism
\begin{equation} \label{def:mapqh} Q_h^+:\R_h^+\to \B, \end{equation} and $\partial Q_h^+=Q_h^+\partial$.

\subsection{Double tableaux}
We use double tableaux to represent monomials of $\B$. Let $\cT_h^+$ be the set of the following tableaux:
\begin{equation}\label{eqn:tableT+}
\left|\begin{array}{ccc}
y_{1,h},\cdots,y_{1,2}, y_{1,1} &| & \\
\vdots & | &\\
y_{m,h},\cdots,y_{m,2}, y_{m,1} & | &\\
& | & z_{1,1},z_{1,2},\cdots, z_{1,h}\\
& | & \vdots\\
& | & z_{n,1},z_{n,2},\cdots, z_{n,h}\\
y'_{1,h},\cdots,y'_{1,2}, y'_{1,1} & | & z'_{1,1},z'_{1,2},\cdots, z'_{1,h} \\
\vdots & \vdots & \vdots \\
y'_{t,h},\cdots,y'_{t,2} y'_{t,1} & | & z'_{t,1}, z'_{t,2}\cdots, z'_{t,h} \\
\end{array}\right|.
\end{equation}
Here $y_{s,h}$ are some $a^{(k)}_{ih}$ and $z_{s,h}$ are some $b^{(k)}_{jh}$;  $y'_{s,h}$ and $z'_{s,h}$ are $*$, $b^{(k)}_{jh}$; for $1\leq l\leq h-1$,
$y_{s,l}$, $y'_{s,l}$ are some $a^{(k)}_{il}$ or  $*$ and $z_{s,l}$, $z'_{s,l}$ are some $b^{(k)}_{jl}$ or $*$; every row of the tableau has elements in $\cS_h$; and
$$y_{s,j}\leq y_{s+1,j}\leq y'_{s',j}\leq y'_{s'+1,j}, \quad z_{s,j}\leq z_{s+1,j}\leq  z'_{s',j}\leq z'_{s'+1,j}.$$
We use the tableau (\ref{eqn:tableT+}) to represent a monomial in $\B$, which is the product of  ${a^{(k)}_{ij}}'s$ and ${b^{(k)}_{ij}}'s$ in the tableau.
It is easy to see that the representation is a one-to-one correspondence between $\cT_h^+$ and the set of monomials of $\B$.
We associate to the tableau (\ref{eqn:tableT+}) the word:
$$ 
y_{1,h}y_{1,h-1}\cdots y_{1,1}\cdots y_{m,2} y_{m,1} z_{1,h}\cdot z_{1,1}\cdots z_{n,1} y'_{1,h-1}\cdots  y'_{1,1} z'_{1,h-1}\cdot z'_{1,1}\cdots z'_{t,h-1}. 
$$
and order these words lexicographically.
For a polynomial $f\in \B$, let $Ld^+(f)$ be its leading monomial in $f$ under the order we defined on $\cT_h^+$.

For $E_1\cdots E_k\in\cS\cM(\cE)$
with \begin{eqnarray*}
E_i&=((u^i_{h_1},k^i_{h_1}),\dots,(u^i_2,k^i_2),(u^i_1,k^i_1)|\in \cE^L_h,\quad \quad\quad\quad\quad \quad\quad\quad\quad\quad &1\leq i\leq m;\\ E_{i+m}&=|(v^i_1,l^i_1),(v^i_2,l^i_2),\dots,(v^i_{h_1},l^i_{h_1}))\in \cE^R_h,\quad \quad\quad\quad\quad \quad\quad\quad\quad\quad &1\leq i\leq n;\\
E_{i+m+n}&=((u^i_{h_1},k^i_{h_1}),\dots,(u^i_2,k^i_2),(u^i_1,k^i_1)|(v^i_1,l^i_1),(v^i_2,l^i_2),\dots,(v^i_{h_1},l^i_{h_1}))\in \cE^L_h, \quad& 1\leq i\leq t,
\end{eqnarray*}
and $k=m+n+t$,
we use the following double tableau to represent it.
\begin{equation}\label{eqn:table+}\left(
\begin{array}{ccc}
(u^1_{h},k^1_{h}),\cdots,(u^1_2,k^1_2),(u^1_1,k^1_1)&|&\\
\vdots &|& \\
(u^m_{h},k^m_{h}),\cdots,(u^m_2,k^m_2),(u^m_1,k^m_1)&|&\\
----------------&|&--------------\\
&|&(v^1_1,l^1_1),(v^1_2,l^1_2),\cdots,(v^1_{h},l^1_{h})\\
&|&\vdots\\
&|&
(v^n_1,l^n_1),(v^n_2,l^n_2),\cdots,(v^n_{h},l^n_{h})\\
----------------&|&--------------\\
(u'^1_{h_1},k'^1_{h_1}),\cdots,(u'^1_2,k'^1_2),(u'^1_1,k'^1_1)&|&(v'^1_1,l'^1_1),(v'^1_2,l'^1_2),\cdots,(v'^1_{h'_1},l'^1_{h_1})\\
(u'^2_{h_2},k'^2_{h_2}),\cdots,(u'^2_2,k'^2_2),(u'^2_1,k'^2_1)&|&(v'^2_1,l'^2_1),(v'^2_2,l'^2_2),\cdots,(v'^2_{h_2},l'^2_{h_2})\\
&\vdots & \\
(u'^t_{h_t},k'^t_{h_t}),\cdots,(u'^t_2,k'^t_2),(u'^t_1,k'^t_1)&|&(v'^t_1,l'^t_1),(v'^t_2,l'^t_2),\cdots,(v'^t_{h_t},l'^t_{h_t})\\
\end{array}
\right).
\end{equation}
Let $T^+:\cS\cM(\cJ_h^+))\to \cT^+$ with
$$T^+(E_1\cdots E_k)=\left(
\begin{array}{ccc}
a_{u^1_{h} h}^{(k^1_{h})}, \,\,\,\cdots,\,\,\, a_{u^1_{2} 2}^{(k^1_{2})},\,\, a_{u^1_{1}1}^{(k^1_{1})} &|&\\
\vdots &|&\\
a_{u^m_{h} h}^{(k^m_{h})},\,\,\, \cdots,\,\,\, a_{u^m_{2}2}^{(k^m_{2})},\,\, a_{u^m_{1}1}^{(k^m_{1})} &|&\\
&|& b_{v^1_{1}1}^{(l^1_{1})},\,\,b_{v^1_{2}2}^{(l^1_{2})},\,\,\,\cdots,\,\,\,b_{v^1_{h} h}^{(l^1_{h})}\\
&|& \vdots\\
&|& b_{v^n_{1}1}^{(l^n_{1})},\,\,b_{v^n_{2}2}^{(l^n_{2})},\,\,\,\cdots,\,\,\,b_{v^n_{h} h}^{(l^n_{h})}\\
*,\cdots, *, a_{u'^1_{h'_1}h'_1}^{(k'^1_{h'_1})},\cdots,a_{u'^1_{1}1}^{(k'^1_{1})}&|&b_{v'^1_{1}1}^{(l'^1_{1})},\cdots,b_{v'^1_{h'_1}h'_1}^{(l'^1_{h'_1})},*,\cdots,*\\
\vdots&|&\vdots \\
*,\cdots, *, a_{u'^t_{h'_t}h'_t}^{(k'^t_{h'_t})},\cdots,a_{u'^t_{1}1}^{('k^t_{1})}&|&b_{v'^t_{1}1}^{(l'^t_{1})},\cdots,b_{v'^t_{h'_t}h'_t}^{(l'^t_{h'_t})},*,\cdots,*
\end{array}
\right).
$$
Then $T^+$ is injective and $T(E_1)\prec T(E_2)$ if $E_1\prec E_2$.
\begin{lemma}\label{lem:leadingterm+}
	Let $J_1\cdots J_k\in \cS\cM(\cJ_h^+)$ and $E_1\cdots E_k\in \cS\cM(\cE_h^+)$ be its associated standard order product of elements of $\cE_h^+$. Assume the double tableau representing $E_1\cdots E_k$ is (\ref{eqn:table+}). Then
	the leading monomial of $Q_h^+(J_1\cdots J_k)$ is represented by the double tableau $T^+(E_1\cdots E_k)$.
	So $$Ld^+\circ Q_h^+=T^+\circ(\pi_h^+)^{-1}: \cS\cM(\cJ_h^+)\to \cT_h^+$$ is injective. The coefficient of the leading monomial of $Q_h^+(J_1\cdots J_m)$ is $\pm 1$.
\end{lemma}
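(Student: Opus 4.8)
I want to prove Lemma~\ref{lem:leadingterm+}: that for a standard monomial $J_1\cdots J_k$ with associated standard product $E_1\cdots E_k$, the leading monomial $Ld^+(Q_h^+(J_1\cdots J_k))$ is exactly the double tableau $T^+(E_1\cdots E_k)$, that the composite $Ld^+\circ Q_h^+$ equals $T^+\circ(\pi_h^+)^{-1}$ and is injective, and that the leading coefficient is $\pm 1$.

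**Approach.** The plan is to compute the leading term factor-by-factor. Each factor $J_i\in\cJ_h^+$ maps under $Q_h^+$ to one of $\bpartial^k Y^{(0)}_{u_1,\dots,u_h}$, $\bpartial^k Z^{(0)}_{v_1,\dots,v_h}$, or $\bpartial^k X^{(0)}_{u_1,\dots,u_r;v_1,\dots,v_r}$ (a derivative of a minor of the $X^{(0)}_{ij}$), via \eqref{eqn:eleRL}--\eqref{eqn:eleJR}. First I would expand each such image as a polynomial in the $a^{(k)}_{il},b^{(k)}_{jl}$: using the Leibniz rule Proposition~\ref{prop:dab} and the determinant-expansion Proposition~\ref{prop:detexpression}, $\bpartial^k Y^{(0)}_{u_1,\dots,u_h}$ is a signed sum over ways of distributing the $k$ derivatives across the $h$ columns of the $a$-determinant, and similarly for $Z$ and for the $X$-minor (which after substituting $X^{(0)}_{ij}=\sum_l a^{(0)}_{il}b^{(0)}_{jl}$ becomes a product of an $a$-row-word and a $b$-column-word). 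I would identify, in each expanded factor, the monomial whose associated word is lexicographically largest under the ordering on $\cT_h^+$; I claim this monomial is precisely the row of $T^+(E_i)$ dictated by the chosen representative $E_i\in\cE(J_i)$, namely the one that loads all of the derivative weight $k$ onto the highest-index column entry (the placement $(u_{h'},k)$ on the left, $(v_{h'},l)$ on the right recorded in the Remark following the standard-monomial definition). The leading coefficient of each single factor is $\pm1$ because that extremal monomial occurs with a single sign in the determinant/Leibniz expansion.

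**Key steps in order.** (1) Establish the single-factor leading-monomial formula: $Ld^+(Q_h^+(J_i))=T^+(E_i)$ with coefficient $\pm1$, for each of the three types, using Propositions~\ref{prop:dab} and~\ref{prop:detexpression}. (2) Show the leading monomial of a product is the product of leading monomials, i.e. $Ld^+(Q_h^+(J_1\cdots J_k))=\prod_i Ld^+(Q_h^+(J_i))$; this requires that the lexicographic word-order on $\cT_h^+$ be compatible with multiplication and that there be no cancellation among cross-terms — for this I would invoke that $T^+$ is order-preserving ($T^+(E_1)\prec T^+(E_2)$ when $E_1\prec E_2$, already noted in the text) and that a standard product assembles into a genuine tableau with weakly increasing columns $y_{s,j}\le y_{s+1,j}$, $z_{s,j}\le z_{s+1,j}$, so that concatenating the leading rows yields a valid element of $\cT_h^+$ rather than something that must be straightened. (3) Conclude that $Ld^+\circ Q_h^+$ agrees with $T^+\circ(\pi_h^+)^{-1}$ as maps $\cS\cM(\cJ_h^+)\to\cT_h^+$, and that injectivity follows since $T^+$ is injective and $\pi_h^+$ is a bijection. (4) The leading coefficient of the whole product is the product of the per-factor $\pm1$ coefficients, hence $\pm1$.

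**Main obstacle.** The hard part will be step~(2): ruling out cancellation and confirming that the product of the individual leading rows really is the leading monomial of the whole product. A priori, a subleading monomial from one factor multiplied by the leading monomial of another could produce a word lexicographically larger than, or tying, the naive product of leading words, and ties could in principle cancel. The resolution hinges on the precise design of the word-order on $\cT_h^+$ (reading the top-index $a$-entries first, then $b$-entries, row by row) together with the standardness conditions (2)--(5) in the definition, which force the rows to stack into a column-increasing tableau; I would argue that because each factor contributes its extremal row and these rows are themselves ordered consistently with $T^+$, the concatenated word is strictly maximal and is achieved by a unique product of monomials, so no cancellation can occur. Verifying this compatibility carefully — essentially that the lexicographic tableau order is a monomial order adapted to the standard-monomial filtration — is where the real work lies; the determinant and Leibniz expansions in steps~(1),(3),(4) are routine once the ordering bookkeeping in step~(2) is pinned down.
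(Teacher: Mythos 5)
Your proposal breaks at steps (1) and (2), and the failure is the heart of the matter, not a bookkeeping issue. For $i\geq 2$ the claim $Ld^+(Q_h^+(J_i))=T^+(E_i)$ is false: in a standard product, $E_i$ is the largest element of $\cE(J_i)$ \emph{subject to} the constraint $E_{i-1}\leq E_i$, while the leading monomial of $Q_h^+(J_i)$ computed in isolation corresponds to the \emph{unconstrained} maximum of $\cE(J_i)$ (all derivative weight loaded on the top entry); these differ whenever the constraint is active. Likewise, the tableau order is not multiplicative (the tableau of a product re-sorts the columns), so $Ld^+(fg)\neq Ld^+(f)\,Ld^+(g)$ in general, and the cross-term danger you flag in your ``main obstacle'' paragraph actually materializes. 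Concretely, take $h=2$, $p\geq 3$, $J_1=\bpartial^1(3,2|$, $J_2=\bpartial^2(3,1|$. Then $E_1=((3,1),(2,0)|$, and the only element of $\cE(J_2)$ dominating $E_1$ is $E_2=((3,1),(1,1)|$, so $J_1J_2\in\cS\cM(\cJ_h^+)$; but $Ld^+(Q_h^+(J_2))=a^{(2)}_{32}a^{(0)}_{11}$, coming from the unconstrained maximum $((3,2),(1,0)|$ of $\cE(J_2)$, not $T^+(E_2)=a^{(1)}_{32}a^{(1)}_{11}$. Moreover, the product of the two leading monomials, $W'=a^{(1)}_{32}a^{(0)}_{21}\cdot a^{(2)}_{32}a^{(0)}_{11}$, has tableau word $a^{(1)}_{32}\,a^{(0)}_{11}\,a^{(2)}_{32}\,a^{(0)}_{21}$, whereas the monomial $W=T^+(E_1E_2)=a^{(1)}_{32}a^{(0)}_{21}\cdot a^{(1)}_{32}a^{(1)}_{11}$ also occurs in $Q_h^+(J_1J_2)$ (with coefficient $1$, since this factorization of $W$ is unique) and has word $a^{(1)}_{32}\,a^{(0)}_{21}\,a^{(1)}_{32}\,a^{(1)}_{11}$, which is strictly larger at the second letter because $a^{(0)}_{11}\prec a^{(0)}_{21}$. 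So a non-leading monomial of $Q_h^+(J_2)$ times the leading monomial of $Q_h^+(J_1)$ strictly beats $W'$: your step (2) is false, and with it step (4), whose $\pm 1$ claim requires uniqueness of the factorization of the true leading monomial.

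What is needed, and what the paper's proof supplies, is an induction on the number of factors in which the maximization at each stage is constrained by the tableau already accumulated. Writing $W_{r-1}$ for the leading monomial of $Q_h^+(J_1\cdots J_{r-1})$, one first shows that any other monomial of the partial product times anything from $Q_h^+(J_r)$ stays below $W_{r-1}$, so the leading monomial of $Q_h^+(J_1\cdots J_r)$ must have the form $W_{r-1}M$ with $M$ a monomial of $Q_h^+(J_r)$; one then shows that the candidates $M$ for which $W_{r-1}M$ is not immediately beaten (column indices forming a permutation of $1,\dots,h_r$, entries dominating the corresponding entries of the previous row) are in bijection with representatives $E'_r\in\cE(J_r)$ satisfying the dominance property (P) of the paper's proof, and the standardness conditions say precisely that $E_r$ is the largest such representative, giving $W_r=W_{r-1}\,T^+(E_r)$. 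This context-dependent maximization --- over the set of representatives compatible with $E_{r-1}$, not over all of $\cE(J_r)$ --- is exactly what your factor-by-factor scheme omits and cannot recover.
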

\begin{proof}
	Let $W_r$ be the monomial corresponding to the double tableau $T(E_1\cdots E_r)$.
Let $
	M_r$
	be the monomial corresponding to the double tableau $T(E_r)$.
	Then $W_r=W_{r-1}M_r$.
	We prove the lemma by induction on $r$. If $r=1$, the lemma is obviously true. Assume the lemma is true for $J_1\cdots J_{r-1}$. Then $Ld(Q_h^+(J_1\cdots J_{r-1}))=W_{n-1}$, the monomial corresponds to $T(E_1\cdots E_{r-1})$, and the coefficient of $W_{n-1}$ in $Q_h^+(J_1\cdots J_{r-1})$ is $\pm 1$. Let $h_r=h$ if $r\leq m+n$.
		\begin{eqnarray*}
	Q_h^+(J_r)&=&\sum \pm a_{u^r_{1} s_1}^{(k_1)}a_{u^r_{2} s_2}^{(k_2)}\cdots a_{u^r_{h_r}s_{h_r}}^{(k_{h_r})}, \quad\quad \quad\quad\quad\quad\quad\quad r\leq m;
	\\
			Q_h^+(J_r)&=&\sum \pm b_{v^r_{1} t_1}^{(l_1)}b_{v^r_{2}t_2}^{(l_2)}\cdots b_{v^r_{h_r} t_{h_r}}^{(l_{h_r})}, \quad\quad \quad\quad\quad\quad \quad\quad\quad m<r\leq m+n;
	\\
	Q_h^+(J_r)&=&\sum \pm a_{u^r_{1} s_1}^{(k_1)}a_{u^r_{2} s_2}^{(k_2)}\cdots a_{u^r_{h_r}s_{h_r}}^{(k_{h_r})}b_{v^r_{1} t_1}^{(l_1)}b_{v^r_{2}t_2}^{(l_2)}\cdots b_{v^r_{h_r} t_{h_r}}^{(l_{h_r})}, \quad r>m+n.
	\end{eqnarray*}
If $r<m$, the summation is over all  $k_i\geq 0$ with $\sum k_i=wt(E_r)$, and all $s_i$ with $1\leq s_1, s_2,\dots,s_{h_r}\leq h$ and they are different from each other. If $m<r\leq m+n$,  the summation is over all $l_i\geq 0$ with $\sum l_i=wt(E_r)$, and all $t_i$ with $1\leq t_1,t_2,\dots,t_{h_r}\leq h$ and they are different from each other. If $r>m+n$,
  the summation is over all $l_i, k_i\geq 0$ with $\sum (l_i+k_i)=wt(E_r)$, all $s_i$ with $1\leq s_1, s_2,\dots,s_{h_r}\leq h$ and they are different from each other, and all $t_1,\dots, t_{h_r}$, which are permutations of $s_1,\dots, s_{h_r}$.
	$M_r$ is one of the monomials in $Q_h^+(J_r)$ with coefficient $\pm 1$.
	All of the monomials in $Q_h^+(J_1\cdots J_{r-1})$  except $W_{n-1}$ are less than
	$W_{n-1}$, so any monomial in $Q_h^+(J_1\cdots J_{r-1})$ except $W_{n-1}$ times any monomial in $Q_h^+(J_r)$  is less than $W_{r-1}$.  The coefficient of $W_r$ in $Q^+_h(J_1\cdots J_r)$
	is $\pm 1$ since $W_{r-1}\prec W_r$, and the coefficients of $W_{r-1}$ in $Q^+_h(J_1\cdots J_{r-1})$ and $M_r$ in $Q^+_h(J_r)$ are $\pm 1$.   Now $$W_{r-1}\prec W_r\prec Ld(Q_h^+(J_1\cdots J_{r})).$$
	The leading monomial $Ld(Q_h^+(J_1\cdots J_{r}))$ must have the form
	$$W=W_{r-1} a_{u^r_{1} s_1}^{(k_1)}a_{u^r_{2} s_2}^{(k_2)}\cdots a_{u^r_{h_r}s_{h_r}}^{(k_{h_r})}, r\leq m, \quad W=W_{r-1} b_{v^r_{1} t_1}^{(l_1)}b_{v^r_{2}t_2}^{(l_2)}\cdots b_{v^r_{h_r} t_{h_r}}^{(l_{h_r})}, m<r\leq m+n$$
	$$ W=W_{r-1}a_{u^r_{1} s_1}^{(k_1)}a_{u^r_{2} s_2}^{(k_2)}\cdots a_{u^r_{h_r}s_{h_r}}^{(k_{h_r})}b_{v^r_{1} t_1}^{(l_1)}b_{v^r_{2}t_2}^{(l_2)}\cdots b_{v^r_{h_r} t_{h_r}}^{(l_{h_r})}, \quad r>m+n.$$
	If some $s_i$ or $t_i$ is greater than $h_{r-1}$, then $W\prec W_{n-1}$.
	If there is some $h_{r-1}\geq s_i>h_r$, there is  $1\leq j\leq h_r$, with $j\notin\{s_1,\dots s_{h_r}\}$; if we replace $s_i$ by $j$ in $W$, we get a larger monomial in $Q_h^+(J_1\cdots J_{r})$. So we can assume $s_1,\dots, s_{h_r}$ is a permutation of $1,2,\dots, h_r$. Similarly, we can assume  $t_1,\dots, t_{h_r}$ is a permutation of $1,2,\dots, h_r$.
	We must have $a_{u^r_{i}s_i}^{(k_{i})}\geq a_{u^{r-1}_{s_i}s_i}^{(k^{r-1}_{s_i})}$ if $1<r\leq m$ or $r>m+n+1$, $b_{v^r_{i}t_i}^{(k_{i})}\geq b_{v^{r-1}_{t_i}t_i}^{(k^{r-1}_{t_i})}$ if $m+1<r$, and $a_{u^r_{i}s_i}^{(k_{i})}\geq a_{u^{m}_{i}s_i}^{(k^{m}_{s_i})}$ if $r=m+n+1$ and $m>0$, otherwise $W\prec W_{r-1}$. Such monomials in 
	$Q_h^+(J_r)$ are in one-to-one correspondence with $E'_r\in\cE(J_r)$ which satisfy the property\\
	(P):\quad \textsl{  $E_{r-1}\leq  E'_r$ if $r\neq 1,m+1,m+n+1$ and $E_m\leq E'_r$ and $E_{r-1}\leq E'_r$ if $r=m+n+1$ and $m>0$. } \\
	$E_{r}$ is the largest element in $\cE(J_r)$ satisfying the property (P), since $E$ is standard, so $W_r$ is the leading term of $Q_h^+(J_1\cdots J_r)$. 
\end{proof}

\begin{thm}\label{thm:standmonomial1+} $Q_h^+:\R_h^+\to \B$ is injective, so we may identify $\R^+_h$ with the image $\text{Im}(Q_h^+)$, which is the subring of $\B$ generated by $\bpartial^k X^{(0)}_{ij}$,  $\bpartial^k Y^{(0)}_{u_1,\dots, u_h}$, and $\bpartial^k Z^{(0)}_{v_1,\dots, v_h}$. In particular, the set of standard monomials $\cS\cM(\cJ_h^+)$ is a $\mathbb Z$-basis of $\R_h^+$, so $Q_h^+(\cS\cM(\cJ_h^+))$ is a $\mathbb Z$-basis of the subring  $\text{Im}(Q_h^+)$.
\end{thm}
\begin{proof} By Lemma \ref{lem:leadingterm+}, $Ld^+(Q_h^+(\cS\cM(\cJ_h^+)))$ is linearly independent, so $\cS\cM(\cJ_h^+)$ is linearly independent. By Lemma \ref{lem:base21}, $\cS\cM(\cJ_h^+)$ spans $\R_h^+$.  So $\cS\cM(\cJ_h^+)$ is a $\mathbb Z$-basis of $\R_h^+$ and $Q_h^+$ is injective. 
\end{proof}
Since $Q_h^+$ is injective and $\B$ is an integral domain, we obtain
\begin{cor} $\R^+_h$ is an integral domain.
\end{cor}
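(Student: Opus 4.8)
The plan is to deduce this immediately from the injectivity of $Q_h^+$ established in Theorem \ref{thm:standmonomial1+}. The target ring $\B = \mathbb{Z}[\cS_h]$ is a polynomial ring over $\mathbb{Z}$, hence an integral domain. Since $Q_h^+\colon \R_h^+\to\B$ is an injective ring homomorphism, it identifies $\R_h^+$ with the subring $\C^+=Q_h^+(\R_h^+)$ of $\B$, and I would simply invoke that a subring of an integral domain is again an integral domain.

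Concretely, if $a,b\in\R_h^+$ satisfy $ab=0$, then $Q_h^+(a)\,Q_h^+(b)=Q_h^+(ab)=0$ in $\B$, so $Q_h^+(a)=0$ or $Q_h^+(b)=0$, and injectivity forces $a=0$ or $b=0$. Since $Q_h^+(1)=1\neq 0$ in $\B$, the ring $\R_h^+$ is nonzero, and therefore it has no zero divisors and is an integral domain.

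There is no genuine obstacle remaining: all the substantive work has already been done in Theorem \ref{thm:standmonomial1+}, whose proof combines Lemma \ref{lem:base21} (standard monomials span $\R_h^+$) with Lemma \ref{lem:leadingterm+} (their images under $Q_h^+$ have distinct leading monomials, hence are $\mathbb{Z}$-linearly independent) to conclude that $Q_h^+$ is injective. The corollary is then the routine observation that embedding a nonzero ring into a domain forces the source to be a domain, so the proof is a single short paragraph.
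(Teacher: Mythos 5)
Your proof is correct and is exactly the paper's argument: the paper also deduces the corollary immediately from the injectivity of $Q_h^+$ (Theorem \ref{thm:standmonomial1+}) together with the fact that $\B$ is an integral domain, so that $\R_h^+$ embeds as a subring of a domain. Your explicit zero-divisor computation just spells out the same one-line observation.
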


Let $K$ be an algebraically closed field, and consider the induced map
\begin{equation} \label{def:mapqpk} Q^{+K}_h=Q^+_h\otimes Id: \R_h^+\otimes_{\mathbb{Z}} K\to \B\otimes_{\mathbb{Z}} K.
\end{equation}
By Theorem \ref{thm:standmonomial1+}, we immediately have

\begin{cor} \label{thm:injectiveGL} $\cS\cM(\cJ_h^+)$ is a $K$-basis of $\R_h^+\otimes_{\mathbb{Z}} K$. Therefore ${Q^{+K}_h}$ is injective, so $\R_h^+\otimes_{\mathbb{Z}} K$ is integral and $Q^+_h(\cS\cM(\cJ_h^+))$ is a $K$-basis of the subring $\text{Im}(Q_h^{+K})$.
\end{cor}

\section{Application} \label{sec:application}
In this section, we use the standard monomial basis we have constructed to prove Theorem \ref{thm:JSLinvariant+}. 

\subsection{Arc spaces}
Suppose that $X$ is a scheme of finite type over $K$. Recall that its arc space $J_\infty(X)$ is determined by its functor of points: or every $K$-algebra $A$, we have a bijection
$$\Hom(\Spec A, J_\infty(X))\cong\Hom(\Spec A[[t]], X).$$
If $f: X\to Y$ is a morphism of schemes, we get a morphism of schemes $f_{\infty}:J_{\infty}(X)\to J_{\infty}(Y)$. If $f$ is a closed immersion, then $f_{\infty}$ is also a closed immersion.

If $X=\Spec K[x_1,\dots,x_n]$, then $J_{\infty}(Y)=\Spec K[x^{(k)}_i|1\leq i\leq n, k\in \Zplus]$. The identification is made as follows: for a $K$-algebra $A$, a morphism $\phi: K[x_1,\dots, x_n]\to A[[t]]$ determined by $\phi(x_i)=\sum_{k=0}^\infty a_i^{(k)}t^k$ corresponds to a morphism
$K[x_i^{(k)}]\to A$ determined by $x_i^{(k)}\to a_i^{(k)}$. Note that $K[x_1,\dots,x_n]$ can naturally be identified with the subalgebra $K[x^{(0)}_1,\dots,x^{(0)}_n]$ of $K[x_i^{(k)}]$, and from now on we use $x_i^{(0)}$ instead of $x_i$.

The polynomial ring $K[x^{(k)}_i]$ has a derivation $\partial$ defined on generators by 
\begin{equation} \label{def:partial} \partial x_i^{(k)}=(k+1)x_i^{(k+1)}.\end{equation}
It will be more convenient to work with the normalized derivation $\frac 1{n!}\partial^n$, but this is a priori not well-defined on $K[x_i^{(k)}]$ if $\text{char}\ K$ is positive. However, $\partial$ is well defined on $\mathbb Z[x_i^{(k)}]$, and $\bpartial^n = \frac 1{n!}\partial^n$ maps $\mathbb Z[x_i^{(k)}]$ to itself. Therefore for any $K$, there is an induced $K$-linear map 
\begin{equation} \label{def:bpartial} \bpartial^n:K[x_i^{(k)}]\to K[x_i^{(k)}],\qquad \bpartial^n x_i^{(k)}=C_{k+n}^n  x_i^{(k+n)},\end{equation} obtained by tensoring with $K$. 
Here $C_{k+n}^n$ is given by\eqref{def:ckn}.

If $X$ is the affine scheme with affine coordinate ring $K[X] = K[x^{(0)}_1,\dots, x^{(0)}_n]/(f_1,\dots, f_r)$, then $J_\infty(X)$ is the affine scheme with affine coordinate ring
$$K[J_{\infty}(X) ]= K[x_i^{(k)}|\ i =1,\dots, n,\ k \in \Zplus]/ ( \bpartial^l f_j| \ j = 1,\dots, r,\ l \geq 0 ).$$ For every $f\in K[x^{(0)}_1,\dots, x^{(0)}_n]$, we have 
$$\phi(f)=\sum_{k=0}^\infty (\bpartial^kf)(a_1^{(0)},\cdots, a_n^{(k)}) \,t^k.$$ Therefore $\phi$ induces a morphism $K[x^{(0)}_1,\dots, x^{(0)}_n]/(f_1,\cdots f_r)\to A[[t]]$ if and only if 
$$(\bpartial^kf_i)(a_1^{(0)},\cdots, a_n^{(k)})=0,\ \text{for all} \ i = 1,\dots, r,\ k\geq 0.$$

If $Y$ is the affine scheme with affine coordinate ring $K[Y]=K[y^{(0)}_1,\dots,y^{(0)}_m]/(g_1,\dots,g_s)$, a morphism $P:X\to Y$ induces a ring homomorphism $P^*:K[Y]\to K[X]$.
Then the induced homomorphism of arc spaces $P_\infty:J_\infty(X)\to J_\infty(Y)$ is given by
$$P^*_\infty(y_i^{(k)})=\bpartial^k P^*(y_i^{(0)}).$$
In particular, $P^*_\infty$ commutes with $\bpartial^k$ for all $k\geq 0$.

\subsection{Invariant theory for arc spaces}
Let $G=SL_h(K)$, and let $W=K^{\oplus h}$ be its standard representation. As before, for $p,q \geq 0$, let $V=W^{\oplus p}\bigoplus {W^*}^{\oplus q}$. The group structure $G\times G\to G$
induces the group structure on its arc space
$$J_\infty(G)\times J_\infty(G)\to J_\infty(G).$$ 
Therefore $J_\infty(G)$ is an algebraic group. The action of $G$ on $V$, $G\times V\to V$,
induces an action of $J_\infty(G)$ on
$J_\infty (V)$,
$$J_\infty(G)\times J_\infty(V)\to J_\infty(V).$$
The affine quotient map 
$q:V\to V/\!\!/G$ induces $q_\infty: J_\infty(V)\to J_\infty(V/\!\!/G)$ and 
\begin{equation} \label{def:barqinfty} \bar q_\infty: J_\infty(V)/\!\!/J_\infty(G)
\to J_\infty(V/\!\!/G).\end{equation}

Recall the ring $\R^+_h$ defined in \eqref{def:ringrplush}. Define the larger ring 
\begin{equation} \label{def:ringtilderplush} \tilde \R_h^+ = \R / \tilde \cI_h^+,\end{equation} where $\tilde \cI_h^+ \subset \cI_h^+$ is the ideal generated by relations \eqref{eqn:relation0},  \eqref{eqn:relation1}, and \eqref{eqn:relation2}, and their $n^{\text{th}}$ normalized derivatives. Clearly we have a surjective homomorphism $\tilde \R_h^+ \rightarrow \R^+_h$, and the affine coordinate ring of $J_\infty(V/\!\!/G)$ is
$$K[J_\infty(V/\!\!/G)] = \tilde \R_h^+ \otimes_{\mathbb{Z}} K.$$
Identifying $K[J_{\infty}(V)]$ with $\B \otimes_{\mathbb{Z}} K$, we have the induced map 
\begin{equation} \label{eqn:mapq}  q_\infty^*:K[J_\infty(V/\!\!/G)] \to K[J_{\infty}(V)], \end{equation} whose image $\text{Im}(q_{\infty}^*)$ is $J_{\infty}(G)$-invariant.
Note that map $\tilde Q_h^+: \R^+_h \rightarrow \B$ induces a map $\tilde \R_h^+ \rightarrow \B$, and after tensoring with $K$ we recover \eqref{eqn:mapq}

Identifying $\bpartial^0(h,\dots,2,1|) \in \R^+_h$ with an element of $K[J_\infty(V/\!\!/G)]$, let 
$$\Gamma= q_{\infty}^*(\bpartial^0(h,\dots,2,1|) \in K[J_{\infty}(V)],$$ and let $K[J_{\infty}(V)]_\Gamma$ and $\text{Im}(q_{\infty}^*)_\Gamma$ be the localizations of $K[J_{\infty}(V)]$ and $\text{Im}(q_{\infty}^*)$ at $\Gamma$.
\begin{lemma}\label{lemma:equalSLinvariant}If $p\geq h$,
	$${K[J_{\infty}(V)]_\Gamma}^{J_\infty({SL}_h(K))} =\text{Im}(q_{\infty}^*)_\Gamma.$$ 
\end{lemma}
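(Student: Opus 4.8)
The plan is to exploit the invertibility of $\Gamma=Q_h^+(\bpartial^0(h,\dots,2,1|)=Y^{(0)}_{1,\dots,h}$, which after localization turns the first $h$ rows of the $a$-matrix into an honest frame, and to use this frame to trivialize the $J_\infty(SL_h(K))$-action. The inclusion $\C^{+K}_\Gamma\subseteq ({\B^K_\Gamma})^{J_\infty(SL_h(K))}$ is immediate: the generators \eqref{eqn:eleJL}, \eqref{eqn:eleJR} and $\bpartial^k X^{(0)}_{ij}$ of $\C^{+K}$ are $J_\infty(SL_h(K))$-invariant, and $\Gamma$ is itself invariant, so inverting it preserves invariance. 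Everything therefore rests on the reverse inclusion, and the hypothesis $p\ge h$ is exactly what guarantees that the rows $w_1,\dots,w_h$ needed to form the frame are available.

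For the reverse inclusion I would argue geometrically on arc spaces, working with the functor of points: an $A$-valued arc is a collection of power series $a_{il}(t)=\sum_k a^{(k)}_{il}t^k$ and $b_{jl}(t)=\sum_k b^{(k)}_{jl}t^k$ in $A[[t]]$, on which $g(t)\in SL_h(A[[t]])=J_\infty(SL_h(K))(A)$ acts through the standard representation. Let $\mathbf A(t)=(a_{il}(t))_{1\le i,l\le h}$ be the frame; its determinant is $\sum_k(\bpartial^k\Gamma)\,t^k$, a power series whose constant term $\Gamma$ is invertible over $\B^K_\Gamma$, so $\mathbf A(t)\in GL_h(A[[t]])$. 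Setting $D(t)=\mathrm{diag}(\det\mathbf A(t),1,\dots,1)$ and $\gamma(t)=D(t)^{-1}\mathbf A(t)$ gives the unique factorization $\mathbf A(t)=D(t)\,\gamma(t)$ with $\gamma(t)\in SL_h(A[[t]])$; here $\gamma$ is an algebraic function of the data because it only involves $\Gamma^{-1}$. Sending an arc to the pair $(\gamma,\ \text{the }\gamma\text{-translate of the arc})$ then defines a morphism
\[
\Phi\colon J_\infty(V)_\Gamma\ \longrightarrow\ J_\infty(SL_h(K))\times S,
\]
where $S$ is the slice on which the frame equals $D(t)$, so that $w_1=(\det\mathbf A,0,\dots,0)$ and $w_m=e_m$ for $2\le m\le h$. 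I would check that $\Phi$ is an isomorphism (its inverse applies $\gamma^{-1}$) and that it is equivariant, the action of $g(t)$ becoming translation $\gamma\mapsto\gamma g^{-1}$ on the first factor and the identity on $S$.

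The next step is to identify $K[S]$ with $\C^{+K}_\Gamma$. On $S$ the free coordinates are $\Gamma$ together with its derivatives, the coordinates of the remaining vectors $w_{h+1},\dots,w_p$, and the coordinates of the covectors $w^*_1,\dots,w^*_q$. By Cramer's rule the coordinates of $w_i$ ($i>h$) in the frame are the ratios $\pm Y^{(0)}_{1,\dots,\hat m,\dots,h,i}/\Gamma$ and their normalized derivatives, while the coordinates of $w^*_j$ are recovered from $X^{(0)}_{mj}$ ($1\le m\le h$) and $\Gamma$; conversely the relation \eqref{eqn:relation0}, which reads $\det(X^{(0)}_{u_iv_j})=Y^{(0)}_{u_1,\dots,u_h}Z^{(0)}_{v_1,\dots,v_h}$, lets one write every $\bpartial^k Z^{(0)}$ as a ratio over $\Gamma$, and the $\bpartial^k Y^{(0)}$, together with $\bpartial^k X^{(0)}_{ij}$ for $i>h$, are polynomial in the slice coordinates. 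Hence $K[S]=\C^{+K}_\Gamma$ as subrings of $\B^K_\Gamma$. Finally, since $J_\infty(SL_h(K))$ is an affine group scheme acting on itself by translation, $K[J_\infty(SL_h(K))]^{J_\infty(SL_h(K))}=K$ (the coaction condition together with the counit forces invariants to be constant), so taking invariants in $\B^K_\Gamma\cong K[J_\infty(SL_h(K))]\otimes_K\C^{+K}_\Gamma$ yields $({\B^K_\Gamma})^{J_\infty(SL_h(K))}=K\otimes_K\C^{+K}_\Gamma=\C^{+K}_\Gamma$.

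The part I expect to require the most care is the construction and verification of $\Phi$: one must check that $\gamma=D^{-1}\mathbf A$ and the slice data really define mutually inverse morphisms of the localized arc schemes, all of which is legitimate only after inverting $\Gamma$, where $\det\mathbf A(t)$ becomes an invertible power series, and that the resulting action on the target is genuinely translation on the group factor. The accompanying identification $K[S]=\C^{+K}_\Gamma$ is where the classical second fundamental theorem enters, through the use of \eqref{eqn:relation0} to express the $Z$-invariants; I would treat this bookkeeping last, once the clean equivariant splitting is in hand.
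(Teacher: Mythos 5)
Your proof is correct and is essentially the paper's own argument: the paper likewise trivializes the $J_\infty(SL_h(K))$-action over the locus where $\Gamma$ is inverted, using the slice $H\subset V_\Gamma$ on which the frame is $\mathrm{diag}(a_{11},1,\dots,1)$ (your $S$ is exactly $J_\infty(H)$), obtaining the equivariant splitting $J_\infty(V)_\Gamma\cong J_\infty(SL_h(K))\times J_\infty(H)$ and then identifying the slice with $J_\infty(V/\!\!/ SL_h(K))_\Gamma$, which is the same Cramer's-rule bookkeeping you perform. The only cosmetic difference is that the paper carries out the factorization once at the finite level ($SL_h(K)\times H\cong V_\Gamma$) and applies the functor $J_\infty$, whereas you redo it directly on power series.
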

\begin{proof} Let $G=SL_h(K)$, and let $K[V]_\Gamma$ be the localization of the coordinate ring $K[V]$ at $\Gamma$ and
	$V_\Gamma=\Spec K[V]_\Gamma$. Let $H$ be the subvariety of $V_\Gamma$ given by the ideal generated by $a_{il}-\delta_i^l$ with $1\leq i,l\leq h$ and $i+l> 2$. 
	The composition of the embedding $\iota: H\hookrightarrow V_\Gamma$ and the affine quotient $q^\Gamma: V_\Gamma \to V_\Gamma/\!\!/G=(V/\!\!/G)_\Gamma$ gives the isomorphism $q^{\Gamma}\circ \iota:H \to (V/\!\!/G)_\Gamma$. So we have an isomorphism of arc spaces
	$$q^{\Gamma}_\infty\circ \iota_\infty :J_\infty(H) \to J_\infty((V/\!\!/G)_\Gamma)=J_\infty(V/\!\!/G)_\Gamma.$$ $q^{\Gamma}_\infty$ induces a morphism
	$\bar q^{\Gamma}_\infty: J_\infty(V_\Gamma)/\!\!/J_{\infty}(G) \to J_\infty(V/\!\!/G)_\Gamma$.
	The action of $G$ on $V$ gives a $G$-equivariant isomorphism
	$$G\times H\to V_\Gamma.$$
	So we have a $J_{\infty}(G)$-equivariant isomorphism
	$$J_{\infty}(G)\times J_\infty(H)\to J_\infty(V_\Gamma)=J_\infty(V)_\Gamma.$$
	and an isomorphism of their affine quotients
	$$i: J_\infty(H)=J_{\infty}(G)\times J_\infty(H)/\!\!/G\cong J_\infty(V)/\!\!/J_{\infty}(G).$$
	$\bar q^{\Gamma}_\infty \circ i=q^{\Gamma}_\infty\circ \iota_\infty$ is an isomorphism,
	so $\bar q^{\Gamma}_\infty$ is an isomorphism since $i$ is an isomorphism. So
	${K[J_{\infty}(V)]_\Gamma}^{J_\infty(G)}=\bar q^{\Gamma *}_\infty(K[J_\infty(V/\!\!/G)_\Gamma)=\text{Im}(q_{\infty}^*)_\Gamma$.
\end{proof}
\begin{thm}\label{thm:JSLinvariant} $K[J_{\infty}(V)]^{J_\infty({SL}_h(K))} =\text{Im}(q_{\infty}^*)$. 
\end{thm}
\begin{proof} Let $G=SL_h(K)$.
	If $p\geq h$, regard $K[J_{\infty}(V)]$ and $\text{Im}(q_{\infty}^*)_{\Gamma}$ as subrings of $K[J_{\infty}(V)]_{\Gamma}$.
	By Lemma \ref{lemma:equalSLinvariant}, we have  $${K[J_{\infty}(V)]}^{J_{\infty}(G)}=K[J_{\infty}(V)] \cap \text{Im}(q_{\infty}^*)_{\Gamma}.$$ Now for any $f \in K[J_{\infty}(V)] \cap \text{Im}(q_{\infty}^*)_{\Gamma}$, $f=\frac{g}{\Gamma^n}$ with $\Gamma^nf=g\in \text{Im}(q_{\infty}^*)$. The leading monomial of $g$ is $$Ld(g)=(a^{(0)}_{11}\cdots a^{(0)}_{hh})^n Ld(f)$$
	with coefficient $C_0\neq 0$.
	Since $g\in \text{Im}(q_{\infty}^*)$, there is a standard monomial $J\in\cS\cM(\cJ_h^+)$, with $Ld(Q^+_h(J))=Ld(g)$. Since $J$ has the factor $(h, \dots, 1|^n$,  $q_{\infty}^*(J)$ has the factor $\Gamma^n$. Thus $f-C_0\frac{q_{\infty}^*(J)}{{\Gamma}^n}\in K[J_{\infty}(V)] \cap \text{Im}(q_{\infty}^*)_{\Gamma}$ with a lower leading monomial, and $\frac{q_{\infty}^*(J)}{{\Gamma}^n}\in \text{Im}(q_{\infty}^*)$. By induction on the leading monomial of $f$, $f\in \text{Im}(q_{\infty}^*)$. So $$K[J_{\infty}(V)] \cap\text{Im}(q_{\infty}^*)_{\Gamma}=\text{Im}(q_{\infty}^*),$$ and ${K[J_{\infty}(V)]}^{J_{\infty}(G)} =\text{Im}(q_{\infty}^*)$.
	
	More generally, let $V' = W^{\oplus p +h} \bigoplus {W^*}^{\oplus q}$, where $W = K^{\oplus h}$ as above. Its arc space has affine coordinate ring
	$$K[J_{\infty}(V')] = K[a^{(k)}_{il},b^{(k)}_{jl}|\ 1\leq i\leq p+h,\ 1\leq j\leq q,\ k\in \Zplus ],$$ which contains $K[J_{\infty}(V)]$ as a subalgebra, and has an action of $J_{\infty}(G)$. As shown above, $K[J_{\infty}(V')]^{J_{\infty}(G)}$ is generated by $\bpartial^kX^{(0)}_{ij}=\bpartial^k\sum_l a^{(0)}_{il}b^{(0)}_{jl}$ and $q_{\infty}^*(J)$ with $J=\bpartial^k(i_h,\dots, i_1|$  or $J=\bpartial^k|j_1,\dots, j_h)$.

	Let ${\cI}$ be the ideal of $K[J_{\infty}(V')]$ generated by $a^{(k)}_{il}$ with $i>p$. Then  $$K[J_{\infty}(V')]=K[J_{\infty}(V)] \oplus {\cI}.$$
	Note that $K[J_{\infty}(V')]$ amd ${\cI}$ are $J_{\infty}(G)$-invariant subspace of $K[J_{\infty}(V')]$, and
	$$K[J_{\infty}(V')]^{J_{\infty}(G)}=K[J_{\infty}(V)]^{J_{\infty}(G)}\oplus {\cI}^{J_{\infty}(G)}.$$
	If $i>p$, $\bpartial^k X^{(0)}_{ij}\in {\cI}^{J_{\infty}(G)}$. If $J=\bpartial^k(i_h,\dots, i_1|$ with $i_h>p$, then $q_{\infty}^*(J) \in \cI$. So $$K[J_{\infty}(V)]^{J_{\infty}(G)} \cong K[J_{\infty}(V')]^{J_{\infty}(G)} \slash \cI^{J_{\infty}(G)},$$ is generated by $\bpartial^k X^{(0)}_{ij}$,  $1\leq i\leq p$, $1\leq j\leq q$ and $q_{\infty}^*(J)$, $J\in \cJ_h^L\cup \cJ_h^R$. Therefore $K[J_{\infty}(V)]^{J_{\infty}(G)}=\text{Im}(q_{\infty}^*)$. 
\end{proof}

\begin{proof}[Proof of Theorem \ref{thm:JSLinvariant+}]
 By Corollary \ref{thm:injectiveGL} and  Theorem \ref{thm:JSLinvariant}, 
 $$\text{Im}(q_{\infty}^*) \cong \R_h^+ \otimes_{\mathbb{Z}} K \cong K[J_{\infty}(V)]^{J_\infty({SL}_h(K))}.$$ Then according to the definition of $\R_h^+$, the theorem is true. 
\end{proof}

Now by Theorem \ref{thm:JSLinvariant}, $\text{Im}(q_{\infty}^*)=K[J_\infty(V)/\!\!/J_\infty(G)]$. The map $\bar q_\infty : J_\infty(V)/\!\!/J_\infty(G)
\to J_\infty(V/\!\!/G)$ induces a ring homomorphism of affine coordinate rings 
$$\bar q^*_\infty: K[J_\infty(V/\!\!/G)] \to K[J_\infty(V)/\!\!/J_\infty(G)]$$ of affine coordinate rings given by (\ref{eqn:mapq}). By Theorem \ref{thm:JSLinvariant+}, $\bar q^*$ is surjective and the kernel of $\bar q^*$ is generated by Equation (\ref{eqn:relationinvariant2}) and (\ref{eqn:relationinvariant3}) for $l\neq 0$.

\begin{cor}If $2<h<\max(p,q)-2$,
	$\bar q_\infty : J_\infty(V)/\!\!/J_\infty(G)
	\to J_\infty(V/\!\!/G)$ is not an isomorphism and $J_\infty(V/\!\!/G)$ is not integral. Otherwise, $\bar q_\infty$ is an isomorphism and $J_\infty(V/\!\!/G)$ is integral.
\end{cor}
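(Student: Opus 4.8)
The plan is to reduce the entire statement to the algebra of the map $\bar q^*_\infty : \tilde \R_h^+ \to \C^{+K}$. Since $\bar q_\infty$ is a morphism of affine schemes, it is an isomorphism if and only if $\bar q^*_\infty$ is, and $J_\infty(V/\!\!/G) = \Spec \tilde\R_h^+$ is integral if and only if $\tilde\R_h^+$ is an integral domain. By the discussion following Theorem \ref{thm:JSLinvariant+}, $\bar q^*_\infty$ is surjective with kernel $\mathfrak{k} = \ker \bar q^*_\infty$ generated by the relations \eqref{eqn:relationinvariant2} and \eqref{eqn:relationinvariant3} with $0 < l < h$, together with their normalized derivatives; by Corollary \ref{thm:injectiveGL} the target $\C^{+K}$ is an integral domain. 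Thus $\bar q_\infty$ is an isomorphism exactly when $\mathfrak k = 0$, and in that case $\tilde\R_h^+ \cong \C^{+K}$ is automatically a domain, so $J_\infty(V/\!\!/G)$ is integral. The whole corollary therefore comes down to determining precisely when $\mathfrak k = 0$, and the numerical threshold will emerge from an index count on the generators of $\mathfrak k$.

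First I would treat the isomorphism regime $h \le 2$ or $\max(p,q) \le h+2$ by showing that every generator of $\mathfrak k$ already lies in $\tilde\cI_h^+$, i.e. is a consequence of \eqref{eqn:relation0}--\eqref{eqn:relation2} and their normalized derivatives. A relation \eqref{eqn:relationinvariant2} with parameters $0 \le l < i \le h$ symmetrizes a pool $u_1,\dots,u_{h+i}$ of unprimed indices against a fixed block $u'_{i+1},\dots,u'_h$. A genuinely new relation requires this fixed block to be nonempty, i.e. $i \le h-1$, and also $i > l \ge 1$, i.e. $i \ge 2$; together these force $h \ge 3$. When $h \le 2$ no such $i$ exists: for $h=2$ the only admissible choice $l=1,\,i=2=h$ has empty fixed block, the two $Y$-factors are symmetric in the full four-index pool, and the symmetrization recombines, via the pairing of a subset with its complement, into a single normalized derivative of the Pl\"ucker relation \eqref{eqn:relation2}, hence lies in $\tilde\cI_h^+$. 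When $\max(p,q)\le h+2$, a pool of $h+i\ge h+2$ distinct indices can be formed only for $i=2$ and $\max(p,q)=h+2$, in which case it exhausts the available indices and the fixed block is forced to overlap it completely; I would check that under this forced overlap the relation again reduces to \eqref{eqn:relation2} and its derivatives. Either way $\mathfrak k = 0$, $\tilde\R_h^+\cong\C^{+K}$, and $\bar q_\infty$ is an isomorphism onto an integral scheme.

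Next I would treat the non-isomorphism regime $2 < h < \max(p,q)-2$. Assume $p=\max(p,q)\ge h+3$, the case $q=\max(p,q)$ being symmetric using \eqref{eqn:relationinvariant3} and the $b$-variables. Here there is room for the minimal new relation: take $l=1$, $i=2$, pool $u_1<\cdots<u_{h+2}$ equal to $1,\dots,h+2$, and fixed indices $u'_3<\cdots<u'_h$ with $u'_h=h+3\le p$, so that at least one primed index is disjoint from the pool. Let $R\in\tilde\R_h^+$ be the resulting instance of \eqref{eqn:relationinvariant2}. The task is to show $R\ne 0$ in $\tilde\R_h^+$, equivalently $R\notin\tilde\cI_h^+$; this is exactly the assertion, flagged after Theorem \ref{thm:JSLinvariant+}, that \eqref{eqn:relationinvariant2} with $0<l<h$ is not a consequence of \eqref{eqn:relation0}--\eqref{eqn:relation2} and their derivatives. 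I would prove it with the leading-term machinery of Lemma \ref{lem:leadingterm+}: the presence of the disjoint index $h+3$ in the fixed block prevents the two $Y$-factors from recombining into a total derivative of a classical relation, so the double-tableau leading monomial of a surviving term of $R$ cannot be matched by any derivative of \eqref{eqn:relation0}--\eqref{eqn:relation2}. Hence $\mathfrak k\ne 0$ and $\bar q_\infty$ is not an isomorphism. For non-integrality, note that $V/\!\!/G$ is irreducible, so $J_\infty(V/\!\!/G)$ is irreducible by Kolchin's theorem and is integral if and only if it is reduced; and when $\operatorname{char} K = 0$ the nilradical $\cN$ of $\tilde\R_h^+$ equals $\mathfrak k$, so $R$ is a nonzero nilpotent, $\tilde\R_h^+$ is not reduced, and $J_\infty(V/\!\!/G)$ is not integral.

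The hard part will be the two combinatorial verifications flanking the threshold. On the isomorphism side the delicate cases are the boundaries $h=2$ and $\max(p,q)=h+2$, where candidate relations do exist combinatorially but must be explicitly straightened into $\tilde\cI_h^+$; the engine is turning the absence of a fixed primed block (for $h=2$) or its forced complete overlap with the pool (for $\max(p,q)=h+2$) into an actual recombination as a derivative of \eqref{eqn:relation2}. On the non-isomorphism side the crux is the separation $R\notin\tilde\cI_h^+$, i.e. genuinely distinguishing the $l>0$ straightening relation from the derivatives of the second fundamental theorem relations; the leading-term bijection of Lemma \ref{lem:leadingterm+} is the natural tool, but one must verify carefully that the distinguished monomial contributed by the disjoint index survives in $\tilde\R_h^+$ rather than being cancelled by a classical syzygy.
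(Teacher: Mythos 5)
Your overall skeleton---reduce everything to whether $\ker \bar q^*_\infty$ vanishes, then split on the numerical condition---is also the paper's skeleton, and your witness relation ($l=1$, $i=2$, pool $1,\dots,h+2$, a fixed index equal to $h+3$) is essentially the paper's element $f$. The genuine gap is in the crux step, showing $R\notin\tilde\cI_h^+$: Lemma \ref{lem:leadingterm+} cannot do this. That lemma describes leading double-tableau monomials of elements $Q_h^+(J_1\cdots J_k)$ of the subring $\C^{+K}\subset\B^K$; but your $R$ lies in the kernel of $\bar q^*_\infty$, so its image in $\B^K$ is identically zero and has no leading monomial to analyze. Membership in $\tilde\cI_h^+$ is a question inside the polynomial ring $\A=K[X^{(k)}_{ij},Y^{(k)}_{u_1,\dots,u_h},Z^{(k)}_{v_1,\dots,v_h}]$, for which the paper develops no standard-monomial or leading-term theory---indeed none compatible with $\C^{+K}$ can exist in this regime, precisely because $\tilde\R_h^+$ fails to be reduced. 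The paper's actual argument is of a different, elementary kind: it introduces the $\Zplus\times\Zplus\times\Zplus$ grading on $\A$ with $\deg Y^{(k)}_{u_1,\dots,u_h}=(h,0,k)$, restricts to the graded piece $\A[2h,0,1]$, observes that $\tilde\cI_h^+\cap\A[2h,0,1]$ is spanned by the elements \eqref{eqn:relationYY} (the first derivatives of the Pl\"ucker relations \eqref{eqn:relation2}), and notes that each of these---hence every element of their span---has the linear symmetry property that $Y^{(1)}_{A}Y^{(0)}_{B}$ and $Y^{(0)}_{A}Y^{(1)}_{B}$ occur with equal coefficients. The element $f$ violates this symmetry, since $Y^{(0)}_{h+3,\dots}$ occurs in $f$ while $Y^{(1)}_{h+3,\dots}$ does not; so $f\notin\tilde\cI_h^+$. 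You would need to replace your leading-term step by an argument of this type, i.e. some direct analysis of the ideal $\tilde\cI_h^+$ inside $\A$.

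The second gap is that your non-integrality argument works only in characteristic zero: Kolchin's irreducibility theorem and the identification $\cN=\ker\bar q^*_\infty$ are characteristic-$0$ statements (the latter is the corollary \emph{following} this one in the paper, proved only for $\operatorname{char} K=0$, and its proof itself uses irreducibility), whereas the present corollary is asserted over an arbitrary algebraically closed field. The paper's route is characteristic-free and never mentions nilpotents: by Lemma \ref{lemma:equalSLinvariant} the localized map $(\tilde\R_h^+)_\Gamma\to\C^{+K}_\Gamma$ is an isomorphism, so $f$ vanishes in $(\tilde\R_h^+)_\Gamma$, i.e. $\Gamma^n f\in\tilde\cI_h^+$ for some $n$; since $f\neq 0$ in $\tilde\R_h^+$ and $\Gamma^n\neq 0$ (its image in the integral domain $\C^{+K}$ is nonzero), the ring $\tilde\R_h^+$ has zero divisors and is not integral. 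On the isomorphism side your index count is the right idea and parallels the paper; note, though, that both you and the paper defer the actual verification that the relations \eqref{eqn:relationinvariant2} and \eqref{eqn:relationinvariant3} collapse into $\tilde\cI_h^+$ when $h\leq 2$ or $\max(p,q)\leq h+2$, so that part is not a discrepancy, merely equally unproved in both write-ups.
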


\begin{proof}
 $\A=K[X^{(k)}_{ij},Y^{(k)}_{u_1,\dots,u_h},Z^{(k)}_{v_1,\dots,v_h}]$ has a $\Zplus\times \Zplus \times \Zplus $ grading given by 
$$\deg X^{(k)}_{ij}=(1,1,k),\quad \deg Y^{(k)}_{u_1,\dots,u_h}=(h,0,k),\quad \deg Z^{(k)}_{v_1,\dots,v_h}=(0,h,k).$$
If $p>h-2$, we consider $\A[2h,0,1]$, the subspace of elements with degree $(2h,0,1)$. It is a linear combination of 
$Y^{(0)}_{u_1,\dots,u_h}Y^{(1)}_{u'_1,\dots,u'_h}$.
Its intersection with the ideal $\A[2h,0,1]\cap \cI_h^+$ is a linear combination of 
\begin{equation}
\label{eqn:relationYY}
\sum_{i}(-1)^i(
Y^{(1)}_{u'_{h},\dots,u'_{2},u_i} Y^{(0)}_{u_{1},\dots,u_{i-1},u_{i+1},\dots,u_{h}}+Y^{(0)}_{u'_{h},\dots,u'_{2},u_i} Y^{(1)}_{u_{1},\dots,u_{i-1},u_{i+1},\dots,u_{h}}).
\end{equation}
Let 
$$f=\sum_{1\leq i<j\leq h+2}(-1)^{i+j}Y^{(0)}_{h+3,\dots,6,i,j}Y^{(1)}_{1,\dots,i-1,i+1,\dots,j-1,j+1,\dots,h+2},$$ then $\bar q^*_\infty(f)=0$ by Equation (\ref{eqn:basicrelation4}). But $f$ is not in $\tilde \cI_h^+$ since in (\ref{eqn:relationYY}), $Y^{(1)}_{h+3,\dots}Y^{(0)}_{\dots}$ and $Y^{(0)}_{h+3,\dots}Y^{(1)}_{\dots}$ appear in pairs with the same sign, while in $f$ only $Y^{(0)}_{h+3,\dots}Y^{(1)}_{\dots}$ appears. So $\bar q^*_\infty$ is not injective and $\bar q_\infty$ is not an isomorphism.

Recall the element $\Gamma= q_{\infty}^*(\bpartial^0(h,\dots,2,1|) \in K[J_{\infty}(V)]$. By abuse of notation we 
also denote the corresponding elements of $K[J_\infty(V/\!\!/G)]$ and $K[J_\infty(V)/\!\!/J_{\infty}(G)]$ by $\Gamma$, and we denote by $K[J_\infty(V/\!\!/G)]_{\Gamma}$ and $K[J_\infty(V)/\!\!/J_{\infty}(G)]_\Gamma$ the corresponding localizations. By Lemma \ref{lemma:equalSLinvariant},
$\bar q^{\Gamma *}_\infty: K[J_\infty(V/\!\!/G)]_{\Gamma}  \to K[J_\infty(V)/\!\!/J_{\infty}G)]_\Gamma$ is an isomorphism, so $f$ is zero in $K[J_\infty(V/\!\!/G)]_\Gamma$. Then there is some $n\in \Zplus$, with $\Gamma^n f\in \cI^+_h$, so $K[J_\infty(V/\!\!/G)]$ is not integral. Similarly, if $q>h-2$, we can show that $\bar q_\infty$ is not an isomorphism and $K[J_\infty(V/\!\!/G)]$ is not integral.

Next, $\bar q_{\infty}^*$ maps the generators of $K[J_\infty(V/\!\!/G)]$ to the generators of $K[J_\infty(V)/\!\!/J_{\infty}(G)]$, and maps the generators of the ideal $ \cI_h^+$ to the relations (\ref{eqn:relationinvariant1}), and relations (\ref{eqn:relationinvariant2}) and (\ref{eqn:relationinvariant3}) for $l=0$.
If $h\leq 2$ or $h\geq \max(p,q)-2$, it is easy to check that relations (\ref{eqn:relationinvariant2}) and (\ref{eqn:relationinvariant3}) are trivial for $2\leq l<i$. Also, these relations are the same for $l=0$ and $l=1$. So $K[J_\infty(V/\!\!/G)]$ and $K[J_\infty(V)/\!\!/J_{\infty}(G)]$ have the same generators and relations, so $\bar q^*_{\infty}$ is an isomorphism. Finally, $K[J_\infty(V/\!\!/G)]$ is integral since $K[J_\infty(V)/\!\!/J_{\infty}(G)]$ is integral.
\end{proof}
\begin{cor}If $\text{char}\ K=0$, the nilradical $\cN$ in $K[J_\infty(V/\!\!/G)]$ is the kernel of $\bar q^*_{\infty}$, which is generated by Equation (\ref{eqn:relationinvariant2}) and (\ref{eqn:relationinvariant3}) for $l\neq 0$.
\end{cor}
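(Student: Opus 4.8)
The plan is to prove the two asserted facts in turn, the bulk of the work being the ring-theoretic equality $\cN=\ker(\bar q^*)$; the description of the generators then comes for free. Indeed, it was already shown above (via Theorem \ref{thm:JSLinvariant+}(2)) that $\ker(\bar q^*)$ is generated by the relations \eqref{eqn:relationinvariant2} and \eqref{eqn:relationinvariant3} with $l\neq 0$, so once $\cN=\ker(\bar q^*)$ is established this generating set transfers verbatim. When $\max(p,q)\le h+2$ the ring $\tilde\R_h^+$ is integral (as shown in the previous corollary), so $\cN=0$ and $\bar q^*$ is injective, and there is nothing to prove; I may therefore assume $\max(p,q)>h+2$ and, interchanging the roles of $Y$ and $Z$ if necessary, $p\ge h$, so that $\Gamma=Y^{(0)}_{1,\dots,h}$ and Lemma \ref{lemma:equalSLinvariant} applies. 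The inclusion $\cN\subseteq\ker(\bar q^*)$ is immediate: by Corollary \ref{thm:injectiveGL} the image $\C^{+K}$ is an integral domain, so the surjection $\bar q^*\colon\tilde\R_h^+\to\C^{+K}$ has prime, hence radical, kernel, and every radical ideal contains the nilradical.

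For the reverse inclusion I would combine irreducibility with the localization isomorphism. Since $V/\!\!/G=\Spec (B^K)^{SL_h(K)}$ is the spectrum of a subring of a polynomial ring, it is integral, hence irreducible; by Kolchin's theorem \cite{Kol} the arc space $J_\infty(V/\!\!/G)=\Spec\tilde\R_h^+$ is then irreducible, so $\cN$ is the unique minimal prime of $\tilde\R_h^+$ and $\tilde\R_h^+/\cN$ is a domain. Writing $\Gamma$ also for its preimage $Y^{(0)}_{1,\dots,h}\in\tilde\R_h^+$, Lemma \ref{lemma:equalSLinvariant} furnishes an isomorphism $\bar q^{\Gamma *}_\infty\colon(\tilde\R_h^+)_\Gamma\xrightarrow{\sim}\C^{+K}_\Gamma$. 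I would then compare the two factorizations of the composite $\tilde\R_h^+\to\C^{+K}_\Gamma$: one through $\bar q^*$ followed by the localization $\C^{+K}\hookrightarrow\C^{+K}_\Gamma$, which is injective because $\C^{+K}$ is a domain and $\Gamma\neq 0$; the other through $\tilde\R_h^+\to(\tilde\R_h^+)_\Gamma$ followed by the isomorphism. Since the first has kernel $\ker(\bar q^*)$ and the second has kernel $\{f:\Gamma^n f=0\ \text{for some}\ n\}$, this identifies $\ker(\bar q^*)=\{f\in\tilde\R_h^+: \Gamma^n f=0\ \text{for some}\ n\ge 0\}$.

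It then remains to note $\Gamma\notin\cN$ and conclude. Were $\Gamma\in\cN$, it would be nilpotent, so $\bar q^*(\Gamma)=\Gamma$ would be a nonzero nilpotent in the domain $\C^{+K}$, a contradiction; hence the image $\bar\Gamma$ in $\tilde\R_h^+/\cN$ is nonzero. For any $f$ with $\Gamma^n f=0$, the equation $\bar\Gamma^{\,n}\bar f=0$ in the domain $\tilde\R_h^+/\cN$ forces $\bar f=0$, i.e. $f\in\cN$; thus $\ker(\bar q^*)\subseteq\cN$, and together with the first inclusion $\cN=\ker(\bar q^*)$. The hypothesis $\mathrm{char}\,K=0$ does not enter this equality but promotes it to the differential statement from the introduction: since $\cN$ is a differential ideal in characteristic zero, the finitely many $n=0$, $0<l<h$ instances of \eqref{eqn:relationinvariant2}--\eqref{eqn:relationinvariant3} generate $\cN$ together with their normalized derivatives $\bpartial^n$, so $\cN$ is differentially finitely generated. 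I expect the main obstacle to be precisely the reverse inclusion, and within it the clean identification of $\ker(\bar q^*)$ with the $\Gamma$-torsion via Lemma \ref{lemma:equalSLinvariant}; once Kolchin's theorem guarantees that $\tilde\R_h^+/\cN$ is a domain, the rest is formal.
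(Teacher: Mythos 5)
Your proof is correct and follows essentially the same route as the paper's: the inclusion $\cN\subseteq\ker(\bar q^*)$ from the integrality of $\C^{+K}$, the identification of $\ker(\bar q^*)$ with the $\Gamma$-torsion via the localization isomorphism of Lemma \ref{lemma:equalSLinvariant}, and the reverse inclusion from irreducibility of $\tilde\R_h^+$ together with $\Gamma\notin\cN$ (your added care about reducing to $p\geq h$ and the already-integral cases is a legitimate tightening of a point the paper glosses over). One correction: your closing remark that $\mathrm{char}\ K=0$ ``does not enter this equality'' contradicts your own argument --- it enters exactly where you invoke Kolchin's theorem, which is a characteristic-zero statement (arc spaces of irreducible varieties can be reducible in characteristic $p$, so $\tilde\R_h^+/\cN$ need not be a domain there), and this is precisely the step at which the paper uses the hypothesis.
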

\begin{proof}Since $K[J_\infty(V)/\!\!/J_{\infty}(G)]$ is integral, $\cN$ is in the kernel. 
	By Lemma \ref{lemma:equalSLinvariant}, $J_\infty(V_\Gamma)/\!\!/J_\infty(G)
	\to J_\infty(V/\!\!/G)_\Gamma$ is an isomorphism. If there is $f\in K[J_\infty(V/\!\!/G)]$, $\bar q^*_\infty(f)=0$, we must have $(Y^{(0)}_{h,\dots,1})^n f=0$.
	If $\text{ char } K=0$, $K[J_\infty(V/\!\!/G)]$ is irreducible, so $f\in \cN$ since $Y^{(0)}_{h,\dots,1}$ is not in the nilradical.
\end{proof}

\section{Some properties of standard monomials} \label{sec:properties}
In this section, we study the properties of $||\cE(E_{i+1})||$ and $E_{i+1}$ that need to be satisfied to make $E_1E_2\cdots E_n$ a standard monomial. These properties are similar to the properties of standard monomials in \cite{LS1}, and so are their proofs.

Let 
$$E^L=((u_h,k_h),\dots,(u_1,k_1)| \in \cE^L_h,\quad J'^L=\bpartial^{k'}(u'_{h'},\dots,u'_2,u'_1|\in \cJ^L_{h'},$$ $$E^R=|(v_1,l_1),\dots,(v_h,l_h))\in \cE^R_h,\quad J'^R=\bpartial^{l'}|v'_1,v'_2,\dots,v'_{h'})\in\cJ^R_{h'},$$
$$E=((u_h,k_h),\dots,(u_1,k_1)|(v_1,l_1),\dots,(v_h,l_h))\in \cE,\quad 
J'=\bpartial^{n'}(u'_{h'},\dots,u'_1|v'_1,\dots,v'_{h'})\in \cJ.$$
For $h'\leq h$, let
$$E^L(h')=((u_{h'},k_{h'}),\dots,(u_1,k_1)|,\quad  E^R(h')=|(v_1,l_1),\dots,(v_{h'},l_{h'})), $$ 
 $$E(h')=((u_{h'},k_{h'}),\dots,(u_1,k_1)|(v_1,l_1),\dots,(v_{h'},l_{h'})).$$
 Then if $E, E'\in \cE$, $E\leq E'$ if and only if $sz(E)\geq sz(E')=h'$ and $E(h')\leq E'$.

\subsection{ $L(E^L,J'^L)$ and $R(E^R,J'^R)$}

 For $h'\leq h$,
let $\sigma_L$ and $\sigma_R$ be the permutations of $\{1,2,\dots,h'\}$  such that $u_{\sigma_L(i)}<u_{\sigma_L(i+1)}$ and $v_{\sigma_R(i)}<v_{\sigma_R(i+1)}$.
Let $L(E^L,J'^L)=L(E,J')$ and $R(E^R,J'^R)=R(E,J')$ be the smallest non-negative integers $i_0$ and $j_0$ such that 
$u'_{i}\geq u_{\sigma_L(i-i_0)}$, $i_0< i \leq h'$ and  $v'_{j}\geq v_{\sigma_L(j-j_0)}$, $j_0<j\leq h'$, respectively.

Then $$L(E,J')=L(E(L),J'(L)),\quad R(E,J')=R(E(R),J'(R)), $$
$$L(E^L,J'^L)=L(E^L(h'),J'^L),\quad R(E^R,J'^R)=R(E^R(h'),J'^L),$$ $$L(E,J')=L(E(h'),J'), \quad R(E,J')=R(E(h'),J').$$

The following lemma is obvious.
\begin{lemma}\label{lemma:replace}
For $J''^L=\bpartial^k(u''_{h'},\dots,u''_1)\in\cJ_{h'}^L$, if there are at least $s$ elements in $\{u''_{h'},\dots, u''_1\} $ from the set  $\{u'_{h'},\dots, u'_1\} $, then 
$L(E^L,J''^L)\geq L(E^L,J'^L)-h'+s$;  \\
For $J''^R=\bpartial^k(v''_{1},\dots,v''_{h'})\in\cJ_{h'}^R$, if there are at least $s$ elements in $\{v''_{h'},\dots, v''_1\} $ from the set  $\{v'_{h'},\dots, v'_1\} $, then 
$R(E^R,J''^R)\geq R(E^R,J'^R)-h'+s$.

\end{lemma}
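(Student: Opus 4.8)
The plan is to reduce the comparison of the statistics $L(E^L,J'^L)$ and $L(E^L,J''^L)$ to a single threshold–counting identity, after which the inequality is immediate. Write $a_j=u_{\sigma_L(j)}$, $1\le j\le h'$, for the first $h'$ entries of $E^L$ sorted into increasing order, and recall that the entries $u'_1<\cdots<u'_{h'}$ of $J'^L$ are already increasing. For a threshold $w$ put
$$A(w)=\#\{1\le j\le h': a_j\ge w\},\qquad B(w)=\#\{1\le i\le h': u'_i\ge w\},$$
and define $B''(w)$ in the same way from the entries $u''_i$ of $J''^L$. The key claim is the defect formula
$$L(E^L,J'^L)=\max_w\bigl(A(w)-B(w)\bigr),$$
the maximum being taken over $w$ ranging over $a_1,\dots,a_{h'}$ together with $w$ large (so that the value $0$ is always available).

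To establish the formula I would argue both inequalities straight from the definition of $L$ as the least shift $i_0$ with $a_j\le u'_{j+i_0}$ for $1\le j\le h'-i_0$. For the lower bound, writing $m=A(w)-B(w)$ and $j_0=h'-A(w)+1$ for the least index with $a_{j_0}\ge w$, one checks that $j_0+i_0\le h'-B(w)$, hence $u'_{j_0+i_0}<w\le a_{j_0}$, for every $i_0\le m-1$; thus the domination $a_{j_0}\le u'_{j_0+i_0}$ fails and every shift below $m$ is rejected, so $L\ge m$. For the upper bound, were the shift $m:=\max_w(A(w)-B(w))$ to fail there would be $j$ with $a_j>u'_{j+m}$; putting $w=a_j$ gives $A(w)=h'-j+1$ and $B(w)\le h'-j-m$, whence $A(w)-B(w)\ge m+1$, contradicting maximality. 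Hence $L(E^L,J'^L)=m$.

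Granting the formula, the lemma becomes a one-line counting estimate. Let $w^\ast$ attain the maximum for $J'^L$, so that $L(E^L,J'^L)=A(w^\ast)-B(w^\ast)$. Since at least $s$ of the $u''_i$ lie in $\{u'_1,\dots,u'_{h'}\}$, at most $h'-s$ of them are new; the entries $u''_i\ge w^\ast$ that already occur among the $u'$ number at most $B(w^\ast)$, while the remaining ones are among the $\le h'-s$ new entries, giving $B''(w^\ast)\le B(w^\ast)+(h'-s)$. Therefore
$$L(E^L,J''^L)\ge A(w^\ast)-B''(w^\ast)\ge A(w^\ast)-B(w^\ast)-(h'-s)=L(E^L,J'^L)-h'+s.$$
The statement for $R(E^R,J''^R)$ follows by the identical argument with $\sigma_R$ and the entries $v'_i,v''_i$ in place of their $u$-counterparts.

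The only real content is the defect formula, and that is where I would spend the care: the statistic $L$ is defined as a least \emph{diagonal} shift, and translating this domination condition into the monotone threshold counts $A(w)-B(w)$ is precisely the step that makes the dependence on the underlying entry sets transparent. Once this reformulation is in hand, the non-shared-entry bound $B''(w^\ast)\le B(w^\ast)+(h'-s)$ is automatic, so I anticipate no further obstacle; the word ``obvious'' in the statement presumably reflects the expectation that the reader will supply this standard defect-type reformulation.
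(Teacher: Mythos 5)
Your proof is correct, but note that there is nothing in the paper to compare it against: the authors state this lemma with the single remark that it is obvious and give no proof, so you are supplying an argument they omitted. Your route goes through the defect formula $L(E^L,J'^L)=\max_w\bigl(A(w)-B(w)\bigr)$, and both halves check out: every shift $i_0\le A(w)-B(w)-1$ is rejected via the index $j_0=h'-A(w)+1$, the maximality contradiction at $w=a_j$ is sound, and the final estimate $B''(w^\ast)\le B(w^\ast)+(h'-s)$ correctly uses that the entries of $J''^L$ are distinct. Two points in favor of your formulation: the threshold set depends only on $E^L$, so the same test values serve simultaneously for $J'^L$ and $J''^L$, and the formula isolates exactly how $L$ depends on the underlying entry sets, which is what the lemma is about. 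The more elementary argument the authors presumably had in mind stays inside the definition of $L$: if the shift $i_0=L(E^L,J''^L)$ dominates for $J''^L$, then for each $j\le s-i_0$ the entries $u''_{j+i_0},\dots,u''_{h'}$ are all $\ge a_j$, and at least $(h'-j-i_0+1)-(h'-s)$ of them lie in $\{u'_1,\dots,u'_{h'}\}$, forcing $u'_{j+i_0+h'-s}\ge a_j$; hence the shift $i_0+(h'-s)$ dominates for $J'^L$, which is exactly $L(E^L,J'^L)\le L(E^L,J''^L)+h'-s$. That version is two lines; yours costs a preparatory identity but yields a reusable closed form for $L$. Either way the statement, including the $R$ half by the symmetric argument, is established.
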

\subsection{A criterion for $J'$ to be greater than $E$}
We say that $J'^L$ is greater than $E^L$ if there is an element $E'^L\in\cE(J'^L)$ with $E^L(h')\leq E'^L$. Similarly, we say that
 $J'^R$ is greater than $E^R$ if there is an element $E'^R\in\cE(J'^R)$ with $E^R(h')\leq E'^R$. We say that
$J'$ is greater than $E$ if there is an element $E'\in\cE(J')$ with $E\leq E'$. Then $J'$  is greater than $E$  if and only if $J'$  is greater than $E(h')$.  

The following lemma  is from \cite{LS1}.
\begin{lemma} \label{lemma:critgreat0}
	$J'$ is greater than $E$ if and only if 
	$wt(J')-wt(E(h'))\geq L(E,J')+R(E,J')$.
\end{lemma}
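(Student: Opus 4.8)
The plan is to reduce the criterion to a weight-counting problem that splits into independent left and right parts, and to match the two counts with $L(E,J')$ and $R(E,J')$.

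First I would use the equivalence recorded just before the statement, namely that $J'$ is greater than $E$ if and only if $J'$ is greater than $E(h')$. This lets me assume $sz(E)=h'=sz(J')$, so the comparison is between elements of equal size. By the size clause of the partial order, $J'$ greater than $E$ then means there is an $E'\in\cE(J')$ with $E\leq E'$; writing $E'=((w_{h'},a_{h'}),\dots,(w_1,a_1)\,|\,(x_1,b_1),\dots,(x_{h'},b_{h'}))$, this is the componentwise domination $(u_i,k_i)\leq(w_i,a_i)$ and $(v_i,l_i)\leq(x_i,b_i)$ for $1\leq i\leq h'$, where $(w_i)$ ranges over permutations of $(u'_1,\dots,u'_{h'})$, $(x_i)$ over permutations of $(v'_1,\dots,v'_{h'})$, and the weights satisfy $\sum_i a_i+\sum_i b_i=wt(J')$.

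Next I would decouple the left and right conditions, which interact only through the single weight constraint. Since the pair order is weight-dominant, $(u_i,k_i)\leq(w_i,a_i)$ holds exactly when $a_i>k_i$, or $a_i=k_i$ together with $w_i\geq u_i$. Hence for a fixed left permutation $w$ the cheapest admissible weights satisfy $\sum_i a_i=\sum_i k_i+N(w)$, where $N(w)$ is the number of indices $i$ with $w_i<u_i$, and symmetrically on the right. The crux is a matching identity: the minimum of $N(w)$ over all permutations equals the shift $L(E,J')$ of the definition. I would prove this by exhibiting the greedy (shifted) matching that sends $u'_{i_0+i}$ against $u_{\sigma_L(i)}$, which is admissible precisely for $i_0\geq L(E,J')$, and then an exchange argument showing that no permutation makes fewer indices bad; the same argument on the right yields $R(E,J')$.

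Finally I would assemble the budgets. Because increasing any one weight keeps the corresponding pair dominated, any surplus weight can be absorbed, so an admissible $E'$ exists if and only if the minimal left and right weights are together compatible with the total $wt(J')$; after collecting $\sum_i k_i+\sum_i l_i=wt(E(h'))$ this is exactly the single weight inequality relating $wt(E(h'))$, $wt(J')$ and $L(E,J')+R(E,J')$ recorded in the statement. The main obstacle is the matching identity $\min_w N(w)=L(E,J')$: this is a Hall-type fact, and the cleanest route is the explicit greedy shift plus an exchange argument, exactly as in the corresponding lemma of \cite{LS1}, after which the weight bookkeeping is routine.
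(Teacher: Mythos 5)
Your proposal cannot be compared against an in-paper argument, because the paper does not prove Lemma \ref{lemma:critgreat0} at all: it is imported wholesale from the first paper of the series (``The following lemma is from \cite{LS1}''). Your proof therefore supplies combinatorial content that this paper deliberately omits, and in substance it is correct. The reduction to $sz(E)=h'$ via the recorded equivalence is legitimate, since $L(E,J')=L(E(h'),J')$ and $R(E,J')=R(E(h'),J')$; because the pair order is weight-first, domination $(u_i,k_i)\leq (w_i,a_i)$ costs one extra unit of weight precisely at the indices with $w_i<u_i$, so the cheapest dominating choice over a fixed permutation $w$ has left weight $\sum_i k_i+N(w)$; and your matching identity $\min_w N(w)=L(E,J')$ is a genuine Hall-type fact: the shifted matching pairing $u'_{i_0+i}$ with $u_{\sigma_L(i)}$ leaves at most $i_0$ bad leftovers, while conversely, if some matching had only $m<L(E,J')$ bad pairs, the failure of the shift-$m$ condition at some $i>m$ (i.e.\ $u'_i<u_{\sigma_L(i-m)}$) means the $h'-i+m+1$ values $u_{\sigma_L(i-m)},\dots,u_{\sigma_L(h')}$ can absorb at most $h'-i$ of the $u'_j$ with $j>i$, forcing at least $m+1$ bad pairs. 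The surplus-absorption step is also valid, since increasing any single weight $a_i$ preserves the componentwise domination, so left and right interact only through the total weight budget.

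One caveat you should not gloss over: your bookkeeping yields the criterion $wt(J')-wt(E(h'))\geq L(E,J')+R(E,J')$, whereas the statement as printed has the difference reversed. Your direction is the correct one --- a dominating $E'\in\cE(J')$ has componentwise larger weights, so $wt(J')=wt(E')\geq wt(E(h'))+L(E,J')+R(E,J')$ --- and it is the form the paper actually uses downstream: in the proof of Lemma \ref{lem:base20}, ``$J'_b$ is not greater than $E_a$'' is converted into $i_0>n_b-m_a$ with $n_b=wt(J'_b)$ and $m_a=wt(E_a(h_b))$, and Corollary \ref{cor:lrnumber1} asserts $L(E,||E'(s)||)=wt(E'^L(s))-wt(E^L(s))\geq 0$; both are consistent with your inequality and inconsistent with the printed one, which would force $L\leq 0$ in the corollary. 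So the printed statement of Lemmas \ref{lemma:critgreat0} and \ref{lemma:critgreat} transposes the two weights, and your argument proves the intended statement; but you should flag this discrepancy explicitly rather than assert that your inequality ``is exactly'' the one recorded in the statement.
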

The next result is analogous.
\begin{lemma} \label{lemma:critgreat}
	$J'^L$ is greater than $E^L$ if and only if 
	$wt(J'^L)-wt(E^L(h')) \geq L(E^L,J'^L)$ . Similarly,
	$J'^R$ is greater than $E^R$ if and only if 
	$wt(J'^R)-wt(E^R(h'))\geq R(E^R,J'^R)$.
\end{lemma}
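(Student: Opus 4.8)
\emph{Strategy.} The plan is to deduce this one-sided criterion from the two-sided Lemma~\ref{lemma:critgreat0} by attaching a trivial right-hand factor, so that the single weight budget appearing in the two-sided problem is forced entirely onto the left. Fix $E^L=((u_h,k_h),\dots,(u_1,k_1)|\in\cE^L_h$ and $J'^L=\bpartial^{k'}(u'_{h'},\dots,u'_1|\in\cJ^L_{h'}$, where $k'=wt(J'^L)$. Choose $h$ distinct indices $v_1<\cdots<v_h$ and set $v'_j=v_j$, enlarging the ambient bound $q$ to $\max(q,h)$ if necessary; this is harmless, since neither side of the asserted equivalence involves the right-hand data. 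I then form
\[
\hat E=((u_h,k_h),\dots,(u_1,k_1)\mid (v_1,0),\dots,(v_h,0))\in\cE,\qquad \hat J=\bpartial^{k'}(u'_{h'},\dots,u'_1\mid v'_1,\dots,v'_{h'})\in\cJ,
\]
so that $\hat E(L)=E^L$, $\hat E(R)=|(v_1,0),\dots,(v_h,0))$, the left index data of $\hat J$ is that of $J'^L$, and $wt(\hat J)=k'=wt(J'^L)$.

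\emph{Accounting identities.} Next I would record three bookkeeping facts. Since $L$ and $R$ depend only on the index data, $L(\hat E,\hat J)=L(\hat E(L),\hat J(L))=L(E^L,J'^L)$; and because $v'_j=v_j$ gives $v'_j\ge v_{\sigma_R(j)}$ for every $j$, the defining shift for $R$ vanishes, i.e. $R(\hat E,\hat J)=0$. Using $wt(E)=wt(||E||)$ together with the additivity of the total derivative order over the left and right halves, one gets $wt(\hat E(h'))=wt(E^L(h'))+wt(\hat E(R)(h'))=wt(E^L(h'))$, the second summand vanishing because all right-hand entries of $\hat E$ carry order $0$. Hence
\[
wt(\hat E(h'))-wt(\hat J)=wt(E^L(h'))-wt(J'^L),\qquad L(\hat E,\hat J)+R(\hat E,\hat J)=L(E^L,J'^L),
\]
so the two-sided threshold collapses exactly to the one-sided one.

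\emph{Decoupling the greater-than relation.} The heart of the argument is to show that $\hat J$ is greater than $\hat E$ if and only if $J'^L$ is greater than $E^L$. Any $\hat E'\in\cE(\hat J)$ factors as $\hat E'=F(\hat E'^L,\hat E'^R)$ with $\hat E'^L\in\cE(\hat J(L))$, $\hat E'^R\in\cE(\hat J(R))$ and $wt(\hat E'^L)+wt(\hat E'^R)=k'$. By the first of the three order properties noted after the definition of the partial order on $\cE_h^+$, together with the $F$-monotonicity in the remaining two, the relation $\hat E(h')\le\hat E'$ is equivalent to the conjunction $E^L(h')\le\hat E'^L$ and $\hat E(R)(h')\le\hat E'^R$ (one direction from property~(1), the other by writing $\hat E(h')=F(E^L(h'),\hat E(R)(h'))$ and inserting the two inequalities one factor at a time). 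Here the construction pays off: because $\hat E(R)(h')$ carries order $0$ in every slot while its $v$-indices are already dominated by those available to $\hat E'^R$, the relation $\hat E(R)(h')\le\hat E'^R$ can hold only when $\hat E'^R$ also carries order $0$ in each slot, i.e. $wt(\hat E'^R)=0$; the entire weight $k'$ is thereby forced onto $\hat E'^L$. Consequently an $\hat E'\in\cE(\hat J)$ with $\hat E(h')\le\hat E'$ exists precisely when some $\hat E'^L\in\cE(J'^L)$ of weight $k'$ satisfies $E^L(h')\le\hat E'^L$, which is the statement that $J'^L$ is greater than $E^L$.

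Combining the three steps, $J'^L$ is greater than $E^L$ iff $\hat J$ is greater than $\hat E$ iff, by Lemma~\ref{lemma:critgreat0}, $wt(\hat E(h'))-wt(\hat J)\ge L(\hat E,\hat J)+R(\hat E,\hat J)$ iff $wt(E^L(h'))-wt(J'^L)\ge L(E^L,J'^L)$, as claimed; the $R$-statement follows symmetrically by attaching a trivial left-hand factor and reading off $R$ in place of $L$. I expect the decoupling step to be the main obstacle, for two reasons: it needs both directions of the correspondence between $\hat E(h')\le\hat E'$ and the pair of one-sided relations, and, more delicately, it relies on the monotonicity of the partial order in its weight coordinate in the direction consistent with Lemma~\ref{lemma:critgreat0} — the direction under which dominating a weight-$0$ record genuinely forces weight $0$ rather than merely permitting it. Pinning down that this forcing is strict is exactly what fixes the sign of the weight inequality and prevents the budget from leaking to the trivial side.
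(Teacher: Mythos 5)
Your overall route is the paper's own: the paper likewise proves the lemma by attaching a trivial weight-zero right-hand factor (it sets $E_0=|(h,0),\dots,(2,0),(1,0))$, $E_1=F(E^L,E_0)$, $E_2=F(E',E_0)$) and then invokes Lemma \ref{lemma:critgreat0}, and your accounting identities $L(\hat E,\hat J)=L(E^L,J'^L)$, $R(\hat E,\hat J)=0$, $wt(\hat E(h'))=wt(E^L(h'))$, $wt(\hat J)=wt(J'^L)$ are exactly the paper's computation. However, your decoupling step contains a genuine error. You claim that $\hat E(R)(h')\le\hat E'^R$ forces $wt(\hat E'^R)=0$; this is false under the paper's order on $\Zplus\times\Zplus$, which is \emph{increasing and dominant in the weight coordinate}: $(v,0)\le(v',l')$ holds for \emph{every} $l'\ge 1$, regardless of $v,v'$. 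A weight-zero record is therefore dominated by any record of positive weight; for instance $\hat E'^R=|(v_1,1),(v_2,0),\dots,(v_{h'},0))$ satisfies $\hat E(R)(h')\le\hat E'^R$ yet carries weight $1$. So an element of $\cE(\hat J)$ dominating $\hat E(h')$ may well put positive weight on its right part, your ``precisely when'' fails in the forward direction, and --- contrary to your closing remark --- no strict forcing is available to ``fix the sign'': you correctly flagged the delicate point but resolved it in the wrong direction.

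The gap is local and is repaired by exploiting the \emph{opposite} monotonicity. Given $\hat E'\in\cE(\hat J)$ with $\hat E(h')\le\hat E'$, property (1) of the partial order gives $E^L(h')\le\hat E'(L)$, where $wt(\hat E'(L))=k'-wt(\hat E'(R))$ may be strictly less than $k'$. Now transfer the surplus: replace the top entry $(u'',k'')$ of $\hat E'(L)$ by $(u'',\,k''+wt(\hat E'(R)))$. Since increasing the weight coordinate only increases a pair, the domination $E^L(h')\le E'$ is preserved, and the modified element $E'$ lies in $\cE(J'^L)$ because its total weight is now exactly $k'$. With this substitute for the forcing claim, your chain (decoupling, accounting identities, Lemma \ref{lemma:critgreat0}) closes and in substance coincides with the paper's proof; note that the same extraction step is needed, and silently glossed over, in the paper's terse argument, while your backward direction (building $F(E',E_0')$ from $E'\in\cE(J'^L)$) is already the paper's $E_2=F(E',E_0)$.
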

\begin{proof} 
	$J'^L$ is greater then $E^L$ if and only if there is an $E'\in \cE(J'^L)$ with $E^L\leq E'$. Let $$E_0=|(h,0),\dots,(2,0),(1,0)),\qquad E_1=F(E^L,E_0),\qquad E_2=F(E',E_0).$$ Then $E^L\leq E'$ if and only if $E_1\leq E_2$. We have $wt(E_1(h'))=wt(E^L(h'))$, $wt(E_2)=wt(J'^L)$, $L(E_1,||E_2||)=L(E^L,J'^L)$ and $R(E_1,E_2)=0$. By Lemma \ref{lemma:critgreat0}, $E_1\leq E_2$ if and only if 
$wt(J'^L)-wt(E^L(h'))\geq L(E^L,J'^L)$.
The proof of the second statement is similar.
\end{proof}

\begin{cor} \label{cor:critgreat1}$J'^L$ is greater than $E^L$ if and only if $||E^L(h')||J'^L$ is standard;
	$J'^R$ is greater than $E^R$ if and only if $||E^R(h')||J'^R$ is standard;
	$J'$ is greater than $E$ if and only if $||E(h')||J'$ is standard.
\end{cor}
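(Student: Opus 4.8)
The plan is to reduce all three equivalences to the weight criteria already established in Lemmas~\ref{lemma:critgreat0} and~\ref{lemma:critgreat}, after first rewriting ``standard'' in terms of a single comparison. I will describe the two-sided statement in detail and indicate how the left and right statements follow verbatim, with Lemma~\ref{lemma:critgreat} playing the role of Lemma~\ref{lemma:critgreat0}.

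First I would unwind the meaning of standardness for the two-term product $||E(h')||\,J'$. Both factors are two-sided, so a representing standard product of elements of $\cE_h^+$ must consist of elements of $\cE_{h-1}$ and take the form $E_1E_2$ with $E_1\in\cE(||E(h')||)$ and $E_2\in\cE(J')$ (so $m=n=0$, $t=2$). Part~(5) of the definition forces $E_1$ to be the largest element of $\cE(||E(h')||)$, while part~(4) forces $E_2$ to be the largest element of $\cE(J')$ with $E_1\leq E_2$; since $\cE(J')$ is finite, such an $E_2$ exists exactly when some representative of $J'$ dominates $E_1$. Hence the product is standard if and only if $J'$ is greater than $E_1$, where $E_1$ is the largest element of $\cE(||E(h')||)$.

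The second step is to observe that ``greater than'' cannot distinguish between the two representatives $E_1$ and $E(h')$ of the common class $\cE(||E(h')||)$. They have the same size $h'$, the same underlying index sets, and the same total weight, so $wt(E_1)=wt(E(h'))$, $L(E_1,J')=L(E,J')$ and $R(E_1,J')=R(E,J')$, because $L$ and $R$ read off only the index sets. Applying Lemma~\ref{lemma:critgreat0} to $E_1$ therefore turns ``$J'$ is greater than $E_1$'' into the inequality $wt(E(h'))-wt(J')\geq L(E,J')+R(E,J')$, which by the same lemma is ``$J'$ is greater than $E$''. Chaining the two steps proves the two-sided equivalence; for the left and right statements one repeats the argument with the one-sided criterion of Lemma~\ref{lemma:critgreat}, where only $L$ (respectively $R$) enters.

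I do not anticipate a serious obstacle. The only point needing care is the bookkeeping in the second step: the definition of standardness singles out the \emph{largest} representative $E_1$, whereas ``$J'$ greater than $E$'' is phrased through the given representative $E(h')$, and one must check that this replacement leaves the weight and index data seen by Lemmas~\ref{lemma:critgreat0} and~\ref{lemma:critgreat} unchanged. Once that identification is verified the corollary is immediate.
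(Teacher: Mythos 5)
Your proof is correct and takes essentially the same route as the paper's: both conditions are reduced, via Lemma \ref{lemma:critgreat0} (resp.\ Lemma \ref{lemma:critgreat} for the one-sided cases), to the single weight inequality $wt(E(h'))-wt(J')\geq L(E,J')+R(E,J')$. Your explicit check that $wt$, $L$ and $R$ cannot distinguish representatives within $\cE(||E(h')||)$ — so that standardness, which is phrased through the largest representative, gives the same inequality as ``greater than,'' which is phrased through $E(h')$ — is precisely the point the paper leaves implicit when it writes $L(E(h'),J')+R(E(h'),J')=L(E,J')+R(E,J')$.
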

\begin{proof} We only prove the third statement, since the proofs of the others are similar.
	By Lemma \ref{lemma:critgreat}, $J'$ is greater than $E$ if and only if $wt(J')-wt(E(h'))\geq L(E,J')+R(E,J')$ and $||E(h')||J'$ is standard if and only if $wt(J')-wt(E(h'))\geq L(E(h'),J')+R(E(h'),J')=L(E,J')+R(E,J')$.
\end{proof}

\subsection{The \lq\lq largest" property }	
Let
\begin{eqnarray*}
	\cW^L_s(E^L,J'^L)&=&\{J^L=\bpartial^k(u'_{i_s},\dots, u'_{i_1}|| \ 1\leq i_l\leq h',\ J^L \text{ is greater than } E^L\};\\	
\cW^R_s(E^R,J'^R)&=&\{J^R=\bpartial^k|v'_{j_1},\dots, v'_{j_s})| \ 1\leq j_l\leq h',\ J^R \text{ is greater than } E^R\};\\
\cW_s(E,J')&=&\{J=\bpartial^k(u'_{i_s},\dots, u'_{i_1}|v'_{j_1},\dots, v'_{j_s})| \ 1\leq i_l,j_l\leq h',\  J \text{ is greater than } E\}.
\end{eqnarray*}

Let $W_s^L(E,J')=W_s^L(E(L),J'(L))$ and $W_s^R(E,J')=W_s^R(E(R),J'(R)).$

\begin{lemma}\label{cor:compare}
If $E'^L$ is the largest element in $\cE(J'^L)$ such that $E^L\leq E'^L$, then for $s<h'$,
	$||E'^L(s)||$
	is the smallest element in $\cW_s^L(E^L,J'^L)$.
If $E'^R$ is the largest element in $\cE(J'^R)$ such that $E^R\leq E'^R$, then for $s<h'$,
$||E'^R(s)||$
is the smallest element in $\cW_s^R(E^R,J'^R)$.
If $E'$ is the largest element in $\cE(J')$ such that $E\leq E'$, then for $s<h'$,
$||E'(s)(L)||$
is the smallest element in $\cW_s^L(E,J')$ and $||E'(s)(R)||$
is the smallest element in $\cW_s^R(E,J')$, so $||E'(s)||$ is the smallest element in $\cW_s(E,E')$.
	\end{lemma}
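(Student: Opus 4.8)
The plan is to prove the three "smallest element" assertions of Lemma \ref{cor:compare} by reducing everything to the combinatorial content of Lemma \ref{lemma:critgreat} and Lemma \ref{lemma:replace}, which together control exactly when a sequence is greater than $E$ and how the quantities $L$, $R$ behave under changing the indices. I would treat the $\cW^L_s$ statement first, then obtain the $\cW^R_s$ statement by the left-right symmetry of the setup, and finally assemble the mixed statement about $\cW_s(E,J')$ from the two one-sided ones, since $J'$ is greater than $E$ if and only if its left and right parts are simultaneously greater and the ordering on $\cE$ factors through $E(L)$ and $E(R)$ by the remarks following the definition of the partial order.

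For the $\cW^L_s$ case, let $E'^L$ be the largest element of $\cE(J'^L)$ with $E^L\le E'^L$; writing $E'^L=((u'_{\tau(h')},k'_{\tau(h')}),\dots,(u'_{\tau(1)},k'_{\tau(1)})|$ in the standard increasing-index ordering, I claim $||E'^L(s)||$ realizes the minimum of $\cW^L_s(E^L,J'^L)$. The key point is that an element $J^L=\bpartial^k(u'_{i_s},\dots,u'_{i_1}|$ built from a size-$s$ subset of the indices $\{u'_1,\dots,u'_{h'}\}$ is greater than $E^L$ precisely when $wt(E^L(s))-k\ge L(E^L,J^L)$, by Lemma \ref{lemma:critgreat} applied with $h'$ replaced by $s$. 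Since $E'^L$ is the \emph{largest} element of $\cE(J'^L)$ above $E^L$, its truncation $E'^L(s)$ carries the smallest possible total weight $k$ onto its first $s$ slots while keeping the indices as small as possible in the ordering, so $||E'^L(s)||$ both lies in $\cW^L_s$ and is $\preceq$ any competitor. I would make the comparison explicit by invoking Lemma \ref{lemma:replace}: any $J^L\in\cW^L_s$ uses $s$ of the indices $u'_j$, so its $L$-value is controlled below by that of $E'^L(s)$, and combined with the weight inequality from Lemma \ref{lemma:critgreat} this forces $||E'^L(s)||\preceq ||J^L||$ under the order on $\cJ^L_s$ defined in the ordering subsection.

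For the mixed statement I would use $E'(s)(L)$ and $E'(s)(R)$ separately. By the structural identities recorded just before Lemma \ref{lemma:replace}, namely $L(E,J')=L(E(L),J'(L))$ and $R(E,J')=R(E(R),J'(R))$, the criterion of Lemma \ref{lemma:critgreat0} for $J$ being greater than $E$ splits into the left half governed by $L$ and the right half governed by $R$, and $E'$ being the largest element above $E$ means $E'(L)$ and $E'(R)$ are simultaneously largest above $E^L$ and $E^R$. Applying the one-sided results gives that $||E'(s)(L)||$ and $||E'(s)(R)||$ are the smallest elements of $\cW^L_s(E,J')$ and $\cW^R_s(E,J')$ respectively; since the order on $\cJ$ in the ordering subsection compares left parts before right parts, minimality on each side yields minimality of $||E'(s)||$ in $\cW_s(E,J')$.

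\emph{The main obstacle} I anticipate is the last step of the one-sided argument: showing that the truncation of the \emph{largest} $E'^L$ gives the \emph{smallest} $||E'^L(s)||$. This inversion is where the interplay between the ordering on $\cE^L_h$ (which compares total weight first, then the index-weight pairs) and the ordering on $\cJ^L_s$ must be handled carefully, and it is exactly the point where Lemma \ref{lemma:replace} is needed to bound $L(E^L,J^L)$ from below for an arbitrary competitor $J^L$. Once that weight-versus-index tradeoff is pinned down—larger weight concentrated in the top slots of $E'^L$ forces smaller weight, hence smaller $||{\cdot}||$, in the truncation $E'^L(s)$—the remaining comparisons are routine manipulations of the partial order, and the symmetry and splitting arguments for the other two statements go through formally.
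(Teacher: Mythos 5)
There is a genuine gap, and it sits at the heart of your argument: the one-sided claim that the \emph{largest} $E'^L\in\cE(J'^L)$ above $E^L$ has \emph{smallest} truncation $||E'^L(s)||$ in $\cW^L_s(E^L,J'^L)$. Your justification for this step is essentially a restatement of the conclusion (``its truncation carries the smallest possible total weight onto its first $s$ slots''), and the tool you propose to make it precise cannot do the job. Lemma \ref{lemma:replace} compares two sequences of the \emph{same full size} $h'$ and only yields $L(E^L,J''^L)\geq L(E^L,J'^L)-h'+s$; it says nothing directly about a size-$s$ competitor $J^L$ versus $||E'^L(s)||$, and even an adapted version loses the slack $h'-s$, which destroys the comparison. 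The inequality your argument actually needs, namely $L(E^L,J^L)\geq L(E^L,||E'^L(s)||)$ for every $J^L\in\cW^L_s(E^L,J'^L)$, is precisely Corollary \ref{cor:lrnumber2} of the paper -- and that corollary is \emph{deduced from} Lemma \ref{cor:compare}, so invoking it here is circular. Finally, even granting the weight inequality, Lemma \ref{lemma:critgreat} is a statement about weights only: it cannot break ties. When $wt(||E'^L(s)||)=wt(J^L)$, minimality in the order on $\cJ^L_s$ requires comparing index words, and your proposal never addresses this. The paper's actual proof supplies exactly the missing combinatorial content: it takes the smallest $J^L_s\in\cW^L_s$, the largest $E_s\in\cE(J^L_s)$ above $E^L(s)$, locates the first entry where $E_s$ and $E'^L$ disagree, and in each case performs an explicit weight-swap between two slots to contradict the maximality of $E'^L$, the minimality of $J^L_s$, or the maximality of $E_s$. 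No criterion-style weight count substitutes for this exchange argument.

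On the positive side, your reduction of the mixed statement to the two one-sided ones is sound and is a genuine alternative to the paper's treatment (the paper simply cites Lemma 5.4 of \cite{LS1}). The key observation -- which you assert but should prove -- is that if $E'$ is the largest element of $\cE(J')$ above $E$, then $E'(L)$ is the largest element of $\cE(||E'(L)||)$ above $E(L)$ and likewise on the right: otherwise one replaces the offending side by a larger one via $F(\cdot,\cdot)$, keeping the other side fixed, and contradicts the maximality of $E'$ in the lexicographic order on $\cE$, which compares the left word before the right word. Since $\cW^L_s(E,J')$ depends only on the index set of $J'(L)$ and not on how the total weight of $J'$ is distributed, the one-sided results then apply with $J'^L=||E'(L)||$. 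So your architecture is fine; what is missing is the engine, the exchange argument proving the one-sided case itself.
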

\begin{proof} The third statement is Lemma 5.4 of \cite{LS1}.
	We use a similar method to prove the first statement. The proof of the second statement is similar and is omitted.
	Assume $$E'^L=(( u'_{h'},k_{h'}),\dots,(u'_{2},k_2),( u'_{1}, k_1)|.$$
	For $s<h'$, let $J^L_s$ be the smallest element in $\cW_s^L(E^L,J'^L)$. Let
	$$E_s=(( u'_{i_s},\tilde k_{s}),\dots,(u'_{i_2},\tilde k_2),( u'_{i_1},\tilde k_1)|$$
	 be the largest element in $\cE(J^L_s)$ such that $E^L(s)\leq E _s$. 
	 
	 Assume $l$ is the largest number such that $(u'_j, k'_j)=(u'_{i_j},\tilde k_j)$ for $j<l$. 
	 If $l=s+1$, then $E_s=E'^L(s)$, so $||E'^L(s)||$ is the smallest element in $\cW_s^L(E^L,E'^L)$. 
	 Otherwise $l\leq s$, then $i_l\geq l$ and $(u'_{i_l},\tilde k_l)\neq (u'_l,k'_l)$. If $i_l=l$, by the maximality of $E'^L$ and the minimality of $J_s^L$, we must have $(u'_{i_l},\tilde k_l)= (u'_l,k'_l)$. This is a contradiction, so $i_l>l$. 
	 
	 If $(u'_{i_l},\tilde k_l)<(u'_l,k'_l))$, then
	$(u'_{l}, k'_l+k'_{i_l}-\tilde k_{l})> (u'_{i_l}, k'_{i_l})$.
	Let $E''$ be the element in $\cE_{h'}^L(J'^L)$  by replacing $( u'_{l},k'_l)$ and $(u'_{i_l},k'_{i_l})$ in $E'^L$
	by $(u'_{i_l},\tilde k_{l})$ and   $(u'_{l}, k'_l+k'_{i_l}-\tilde k_{l})$, respectively. We have $E'^L\prec E''$ and $E^L(h')\leq E''$.
	But $E'^L\neq E''$ is the largest element in $\cE(||E'^L||)$ such that $E^L\leq E'^L$, a contradiction.

	Assume $(u'_{i_l},\tilde k_l)>(u'_l,k'_l)$. If $l\notin\{i_1,\dots,i_s\}$, replacing $(u'_{i_l},\tilde k_l)$ in $E_s$ by $(u'_l,k'_l)$, we get $E'_s$ with $E^L(s)\leq E'_s$ and $||E'_s||\prec J_s$. This is impossible since $J_s\neq ||E'_s||$
	is the smallest element in $\cW_s(E^L,E'^L) $. If $l=i_j \in\{i_1,\dots,i_s\}$,
	$(u'_{i_l}, \tilde k_l+\tilde k_{j}- k'_{l})> (u'_{i_j}, \tilde k_{j})$.
	Let $E'_s$ be the element of $\cE(J_s)$ obtained by replacing $(u'_{i_l},\tilde k_l)$ and $(u'_{i_j}, \tilde k_{j})$ in $E'_s$
	by $(u'_l,k'_l)$ and   $(u'_{i_l}, \tilde k_l+\tilde k_{j}- k'_{l})$, respectively. We have $E_s\prec E'_s$ and $E^L\leq E'_s$.
	But $E'_s\neq E_s$ is the largest element in $\cE(J_s)$ such that $E^L\leq E'_s$, a contradiction.
\end{proof}
\begin{cor}\label{cor:lrnumber1}
If $E'^L$ is the largest element in $\cE(||E'^L||)$ such that $E^L\leq E'^L$, then for $s<h'$,
	$$L(E,||E'(s)||)=wt(E'^L(s))-wt(E^L(s));$$
	if $E'^R$ is the largest element in $\cE(||E'^R||)$ such that $E^R\leq E'^R$, then for $s<h'$,
	$$R(E^R,||E'^R(s)||)=wt(E'^R(s))-wt(E^R(s));$$
If $E'$ is the largest element in $\cE(||E'||)$ such that $E\leq E'$, then for $s<h'$,
$$L(E,||E'(s)||)+R(E,||E'(s)||)=wt(E'(s))-wt(E(s)).$$	
\end{cor}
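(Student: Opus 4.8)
The plan is to prove the three statements in parallel, treating the first in detail and obtaining the second by the evident left--right symmetry and the third by combining the two. The two ingredients I would lean on are Lemma \ref{cor:compare}, which identifies $||E'^L(s)||$ as the \emph{smallest} element of $\cW^L_s(E^L,J'^L)$, and the greater-than criterion of Lemma \ref{lemma:critgreat}, which detects when a size-$s$ sequence lies above $E^L$ in terms of its weight and the quantity $L$. The decisive structural observation is that $L(E^L,J^L)$ depends only on the $u$-values appearing in $J^L$ and is completely insensitive to the weight $wt(J^L)$.

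For the first statement, write $J^L=||E'^L(s)||$. Since $J^L\in\cW^L_s(E^L,J'^L)$, it is greater than $E^L$, so Lemma \ref{lemma:critgreat} yields one inequality between $wt(E'^L(s))-wt(E^L(s))$ and $L(E^L,J^L)$. For the reverse inequality I would run a minimality (saturation) argument. Fix the $u$-values of $J^L$ and consider the family of size-$s$ sequences on exactly these values but with varying weight; since $L$ is constant along this family, Lemma \ref{lemma:critgreat} singles out a single threshold weight, namely $wt(E^L(s))+L(E^L,J^L)$, at or beyond which such a sequence becomes greater than $E^L$. If $wt(E'^L(s))$ strictly exceeded this threshold, lowering the weight by one would produce a sequence that is still greater than $E^L$ and strictly smaller in the order $\prec$ (which compares weight first), contradicting the minimality of $||E'^L(s)||$ in $\cW^L_s$ guaranteed by Lemma \ref{cor:compare}. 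Hence the threshold is attained and $L(E^L,||E'^L(s)||)=wt(E'^L(s))-wt(E^L(s))$. The second statement is proved identically, replacing left parts by right parts, $L$ by $R$, and $\cW^L_s$ by $\cW^R_s$, and invoking the corresponding half of Lemma \ref{cor:compare}.

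For the third statement I would reduce to the first two rather than argue from scratch. Using the decompositions $L(E,J')=L(E(L),J'(L))$ and $R(E,J')=R(E(R),J'(R))$ recorded earlier in this section, together with the additivity of weight $wt(E'(s))=wt(E'(s)(L))+wt(E'(s)(R))$ and likewise for $E(s)$, it suffices to show $L(E,||E'(s)||)=wt(E'(s)(L))-wt(E(s)(L))$ and $R(E,||E'(s)||)=wt(E'(s)(R))-wt(E(s)(R))$, after which adding the two identities gives exactly $wt(E'(s))-wt(E(s))$. The third part of Lemma \ref{cor:compare} provides precisely what is needed: it asserts that $||E'(s)(L)||$ is the smallest element of $\cW^L_s(E,J')$ and $||E'(s)(R)||$ the smallest element of $\cW^R_s(E,J')$. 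Feeding these into the saturation argument of the previous paragraph, applied to the left and right data separately, yields the two component identities.

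The step I expect to be the main obstacle is the saturation argument, and specifically the claim that lowering the weight of the minimal element keeps it greater than $E^L$ while producing a genuinely smaller element of $\cW^L_s$. This rests on two points that must be checked carefully: that $L(E^L,\cdot)$ is genuinely unchanged when only the weight is decreased, so that the threshold does not move, and that the perturbed sequence still uses an admissible set of $u$-values so that it really lies in $\cW^L_s$. A secondary subtlety is internal to the third statement: one must ensure that the combined criterion governing $\cW_s(E,J')$, in which $L$ and $R$ enter only through their sum as in Lemma \ref{lemma:critgreat0}, is compatible with treating the left and right parts independently. This is exactly the content packaged by the third part of Lemma \ref{cor:compare}, namely that the minimal two-sided element has minimal left part and minimal right part simultaneously.
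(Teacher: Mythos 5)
Your proposal is correct and follows essentially the same route as the paper: the paper's own (very terse) proof likewise combines the minimality statement of Lemma \ref{cor:compare} with the weight criterion of Lemma \ref{lemma:critgreat}, and the saturation argument you spell out --- using that $L(E^L,\cdot)$ depends only on the $u$-values, so the threshold weight $wt(E^L(s))+L$ is actually attained by an element of $\cW^L_s(E^L,J'^L)$ --- is exactly what the paper's one-line invocation of Lemma \ref{lemma:critgreat} compresses. Your left/right decomposition of the third identity via the final clause of Lemma \ref{cor:compare} is likewise just the paper's ``the proofs of the other statements are similar'' made explicit.
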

\begin{proof}
We only prove the first statement since the proofs of the other statements are similar. Since $E^L\leq E'^L$, $E^L\leq E'^L(s)$. 
By Lemma \ref{cor:compare}, $||E'(s)||$ is the smallest element in $\cW^L_s(E^L,J'^L)$. By Lemma \ref{lemma:critgreat}, $L(E,||E'(s)||)= wt(E'^L(s))-wt(E^L(s))$.
\end{proof}
\begin{cor}\label{cor:lrnumber2}
	If $E'^L$ is the largest element in $\cE(||E'^L||)$ such that $E^L\leq E'^L$, then for $s<h'$, and any
	$J^L\in \cW_s^L(E^L,E'^L)$,
		$$L(E^L,||E'^L(s)||)\leq L(E^L,J^L);$$
		If $E'^R$ is the largest element in $\cE(||E'^R||)$ such that $E^R\leq E'^R$, then for $s<h'$, and 	$J^R\in \cW_s^R(E^R,E'^R)$,
	$$R(E^R,||E'^R(s)||)\leq R(E^R,J^R);$$	
If $E'$ is the largest element in $\cE(||E'||)$ such that $E\leq E'$, then for $s<h'$ and any	$J\in \cW_s(E,E')$,
	$$L(E,||E'(s)||)\leq L(E,J), \quad R(E,||E'(s)||)\leq R(E,J).$$
\end{cor}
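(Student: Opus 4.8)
The plan is to deduce all three assertions from the first. The $R$-statement follows by the identical argument after exchanging the roles of the $u$'s and $v$'s, invoking the second halves of Lemma \ref{cor:compare}, Lemma \ref{lemma:critgreat}, and Corollary \ref{cor:lrnumber1}. For the combined statement I would use that $L(E,J)=L(E(L),J(L))$ and $R(E,J)=R(E(R),J(R))$ depend only on the left- and right-hand index sequences, together with Lemma \ref{cor:compare}, which identifies $||E'(s)(L)||$ and $||E'(s)(R)||$ as the smallest elements of $\cW_s^L(E(L),J'(L))$ and $\cW_s^R(E(R),J'(R))$. The $L$- and $R$-statements, applied with $E(L)$ and $E(R)$ in the role of $E^L$ and $E^R$, then give $L(E(L),||E'(s)(L)||)\leq L(E(L),J^L)$ and $R(E(R),||E'(s)(R)||)\leq R(E(R),J^R)$ for all admissible $J^L,J^R$; since the left (resp. right) index set of any $J\in\cW_s(E,E')$ is realized by such a $J^L$ (resp. $J^R$), both desired inequalities follow. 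So everything reduces to the first statement.

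The decisive observation is that $L(E^L,J^L)$ depends only on the underlying index set $\{u'_{i_1},\dots,u'_{i_s}\}$ of $J^L$ and is completely insensitive to $wt(J^L)$. Hence to each index set $V$ drawn from $\{u'_1,\dots,u'_{h'}\}$ there is attached a single value $L_V$, shared by every $\cJ_h^L$-sequence with that index set, and by the weight criterion of Lemma \ref{lemma:critgreat} such a sequence lies in $\cW_s^L(E^L,E'^L)$ exactly when its weight exceeds $wt(E^L(s))$ by at least $L_V$. In particular the sequence $\tilde J^L$ with index set $V$ and minimal admissible weight $wt(E^L(s))+L_V$ belongs to $\cW_s^L(E^L,E'^L)$, and it is the lightest element there having index set $V$.

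Now fix $J^L\in\cW_s^L(E^L,E'^L)$, let $V$ be its index set so that $L(E^L,J^L)=L_V$, and introduce the representative $\tilde J^L$ above. By Lemma \ref{cor:compare}, $||E'^L(s)||$ is the smallest element of $\cW_s^L(E^L,E'^L)$ under $\prec$; since $\prec$ compares equal-sized sequences by weight before anything else, this gives $wt(||E'^L(s)||)\leq wt(\tilde J^L)=wt(E^L(s))+L_V$. Corollary \ref{cor:lrnumber1} evaluates the left side as $wt(||E'^L(s)||)=wt(E'^L(s))=wt(E^L(s))+L(E^L,||E'^L(s)||)$. Cancelling $wt(E^L(s))$ yields $L(E^L,||E'^L(s)||)\leq L_V=L(E^L,J^L)$, as required.

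The crux, and the step I expect to be the main obstacle, is precisely this translation between the $\prec$-minimality of $||E'^L(s)||$, which is a statement about weights, and the inequality of the weight-independent quantities $L$. The minimal-weight representative $\tilde J^L$ is the bridge: it encodes the combinatorial datum $L_V$ as the weight $wt(E^L(s))+L_V$, so that after applying Corollary \ref{cor:lrnumber1} the weight comparison can be read off as the desired comparison of $L$-values. The points demanding care are the verification that $\tilde J^L$ is admissible (its index set lies in $\{u'_1,\dots,u'_{h'}\}$, and the criterion of Lemma \ref{lemma:critgreat} is met, with equality, by the minimal choice of weight) and, in the combined case, the remark that because the weight may be taken arbitrarily large, every admissible left and right index set is indeed realized inside $\cW_s^L$ and $\cW_s^R$ respectively.
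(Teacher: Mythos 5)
Your proposal is correct, and it uses the same three ingredients as the paper's own proof: the $\prec$-minimality of $||E'^L(s)||$ in $\cW_s^L(E^L,E'^L)$ from Lemma \ref{cor:compare}, the weight formula of Corollary \ref{cor:lrnumber1}, and the weight criterion of Lemma \ref{lemma:critgreat}. But your detour through the minimal-weight representative $\tilde J^L$ is a genuine improvement on what the paper writes. The paper's proof is the single chain $L(E^L,||E'^L(s)||)=wt(E'^L(s))-wt(E^L(s))\leq wt(J^L)-wt(E^L(s))\leq L(E^L,J^L)$, whose last step compares the given $J^L$ directly with $E^L(s)$. Under the sign convention with which Lemma \ref{lemma:critgreat} is used everywhere else in the paper (membership of $J^L$ in $\cW_s^L$ forces $wt(J^L)-wt(E^L(s))\geq L(E^L,J^L)$; this is exactly how the bound $i_0>n_b-m_a$ is extracted in the proof of Lemma \ref{lem:base20}), that last inequality points the wrong way, and it is genuinely false for $J^L$ of large weight: $\cW_s^L$ contains elements of every sufficiently large weight over each admissible index set, while $L(E^L,J^L)$ is bounded by $s$. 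Your two observations --- that $L$ is insensitive to weight, and that the lightest element $\tilde J^L$ with the same index set as $J^L$ lies in $\cW_s^L$ with weight exactly $wt(E^L(s))+L(E^L,J^L)$ --- are precisely what make the weight comparison legitimate: minimality of $||E'^L(s)||$ is tested against $\tilde J^L$ rather than against $J^L$, and the conclusion transfers back to $J^L$ because $L(E^L,\tilde J^L)=L(E^L,J^L)$. In effect your write-up is the paper's argument with its one faulty step repaired.

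One caveat on the third statement, which the paper dismisses as ``similar.'' You cannot literally quote the first statement ``with $E(L)$ in the role of $E^L$'': its hypothesis --- that $E'^L$ be the largest element of $\cE(||E'^L||)$ dominating $E^L$ --- is not verified by $E'(L)$, since maximality of $E'$ inside $\cE(||E'||)$ does not obviously restrict to the left-hand part. What saves the reduction is what you in fact invoke: the third part of Lemma \ref{cor:compare} gives directly that $||E'(s)(L)||$ is $\prec$-minimal in $\cW_s^L(E(L),J'(L))$, and your core argument needs only this minimality together with the criterion. Note only that the equality from Corollary \ref{cor:lrnumber1} is not available in this case (its third part controls only the sum $L+R$), so it must be replaced by the one-sided bound $L(E(L),||E'(s)(L)||)\leq wt(||E'(s)(L)||)-wt(E(s)(L))$, which follows from Lemma \ref{lemma:critgreat} because $||E'(s)(L)||$ itself lies in $\cW_s^L(E(L),J'(L))$. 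With that one-line adjustment the combined case goes through exactly as you describe.
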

\begin{proof}
	We only prove the first statement since the proofs of the others are similar. \\
	Assume there is  $J^L=\bpartial^k(u'_{i_s},\dots, u'_{i_1}| \in \cW_s^L(E^L,E'^L)$ with $L(E^L,||E'^L(s)||)> L(E^L,J^L)$. By  Lemma \ref{lemma:critgreat}, $k=wt(J^L)$ can be $wt(E^L(s))+L(E^L, J^L)$. By Lemma \ref{cor:lrnumber1},
	$$L(E^L,||E'^L(s)||)=wt(E'^L(s))-wt(E^L(s)).$$ So $wt(J^L)<wt({E'}^L(s))$.
	But by Lemma \ref{cor:compare}, $||E'^L(s)||$ is the smallest element in $\cW_s^L(E^L,E'^L)$, we have $wt(E'^L(s))\leq wt(J^L)$.  Contradiction!
	\end{proof}
We have the following lemma from \cite{LS1}.
\begin{lemma}\label{lem:lrnumber}
	Let $E_i=((u^i_{h_i},k^i_{h_i}),\dots,(u^i_1,k^i_1)|(v^i_1,l^i_1),\dots,(v^i_{h_i},l^i_{h_i}))$, $i=a,b$. Suppose that $E_b\leq E_a$, and that $E_a$ is the largest element in $\cE(||E_a||)$ such that $E_b\leq E_a$. Let $1\leq h<h_a$ and let $\sigma_i$, $\sigma'_i$ be permutations of $\{1,\dots, h\}$, such that $u^i_{\sigma_i(1)}<u^i_{\sigma_i(2)}<\cdots<u^i_{\sigma_i(h)}$ and $v^i_{\sigma'_i(1)}<v^i_{\sigma'_i(2)}<\cdots<v^i_{\sigma'_i(h)}$. Let $v_1',\dots,v'_{h_a}$ be a permutation of $v^a_1,\dots,v^a_{h_a}$ such that $v_1'<v_2'<\cdots<v'_{h_a}$.  Let $u_1',\dots,u'_{h_a}$ be a permutation of $u^a_1,\dots,u^a_{h_a}$ such that $u_1'<u_2'<\cdots<u'_{h_a}$.
	\begin{enumerate}
		\item \label{item:1}
		Assume $u'_{i_2}=v^a_{\sigma(i_1)}$ with $i_2>i_1$, then for any
		$$K=\bpartial^k(u_h,\dots, u_{s+1},u'_{t_s},\dots, u'_{t_1}|v_{1},\dots,v_h),$$
		with $t_1<t_2<\cdots <t_s<i_2$, $L(E_b,K)>L(E_b,||E_a(h)||)+s-i_1$. 
		\item \label{item:2}
		Assume $v'_{j_2}=v^a_{\sigma(j_1)}$ with $j_2>j_1$, then for any
		$$K=\bpartial^k(u_h,\dots,u_1|v'_{t_1},\dots,v'_{t_s},v_{s+1},\dots,v_h),$$
		with  $t_1<t_2<\cdots <t_s<j_2$, $R(E_b,K)>R(E_b,||E_a(h)||)+s-j_1$. 
	\end{enumerate}
\end{lemma}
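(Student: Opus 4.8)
The plan is to prove statement (1) and to obtain statement (2) by the symmetric argument that interchanges the left indices $u$ with the right indices $v$, the function $L$ with $R$, and the permutation $\sigma_a$ with $\sigma_a'$; so I concentrate on (1). The first step is to observe that $L(E_b,\cdot)$ depends only on the first coordinates (the $u$-values) of its two arguments and not at all on the weights, so the derivative order $k$ and the $v$-entries of $K$ are irrelevant. Writing $w_1\le w_2\le\cdots$ for the sorted $u$-entries of $E_b$ and letting $N(c)$ denote the number of these that are $\le c$, the defining condition of $L$ unwinds to the closed form $L(E_b,J)=\max\bigl(0,\ \max_i(i-N(c_i))\bigr)$, where $c_1<c_2<\cdots$ are the sorted $u$-entries of $J$. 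I would record this reformulation first, since it turns the claim into a transparent counting statement about how many entries of the argument fall below each threshold $c_i$.

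The second step is the key count. Since $u'_{i_2}=u^a_{\sigma_a(i_1)}$ and $t_1<\cdots<t_s<i_2$, every value $u'_{t_1},\dots,u'_{t_s}$ is strictly smaller than $u^a_{\sigma_a(i_1)}$. Among the $h$ entries of $E_a(h)$, exactly the $i_1-1$ values $u^a_{\sigma_a(1)},\dots,u^a_{\sigma_a(i_1-1)}$ lie below $u^a_{\sigma_a(i_1)}$, so at most $i_1-1$ of the $u'_{t_j}$ can coincide with an entry of $E_a(h)$. Hence at least $s-(i_1-1)=s-i_1+1$ of the $u'_{t_j}$ are values strictly below $u^a_{\sigma_a(i_1)}$ that do not occur among the entries of $E_a(h)$. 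Thus, compared with $E_a(h)$, the sequence $K$ carries at least $s-i_1+1$ surplus small $u$-entries sitting below $u^a_{\sigma_a(i_1)}$; the unspecified top entries $u_{s+1},\dots,u_h$ are harmless to a lower bound, because in $\max_i(i-N(c_i))$ enlarging the top entries can only raise the maximum.

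The third step converts this surplus into the $L$-inequality. Maximality of $E_a$ in $\cE(\|E_a\|)$ over $E_b$ pins down $L(E_b,\|E_a(h)\|)$: Corollary \ref{cor:lrnumber1} expresses it through the weights, and Corollary \ref{cor:lrnumber2} (together with Lemma \ref{cor:compare}) shows that $\|E_a(h)\|$ realizes the smallest $L(E_b,\cdot)$ among the sequences greater than $E_b$, which localizes the threshold $i^\ast$ realizing $L(E_b,\|E_a(h)\|)$. I would then track how each of the $s-i_1+1$ surplus small entries of $K$ shifts the realizing index of $\max_i(i-N(c_i))$ downward relative to $E_a(h)$, so that the value of $i-N(c_i)$ for $K$ at the shifted position exceeds that for $E_a(h)$ by at least $s-i_1+1$. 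This yields $L(E_b,K)\ge L(E_b,\|E_a(h)\|)+s-i_1+1$, i.e. the desired strict inequality; Lemma \ref{lemma:replace} provides the template for this kind of threshold comparison.

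The step I expect to be the main obstacle is this last one. The bare count of $s-i_1+1$ surplus entries only yields $L(E_b,K)\ge L(E_b,\|E_a(h)\|)+s-i_1$, and upgrading to the strict inequality is precisely where maximality of $E_a$ over $E_b$ must be used, to exclude configurations of the $w_j$ in which a surplus small entry of $K$ is "absorbed" by an $E_b$-entry lying between two consecutive thresholds and so fails to move the realizing index. Verifying that maximality forces each surplus entry genuinely to advance the index, rather than being cancelled by the counts $N(c_i)$, is the crux of the argument; it runs parallel to the corresponding lemma in \cite{LS1}, from which this statement and its proof are adapted.
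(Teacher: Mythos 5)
First, a point of comparison: the paper contains no proof of this lemma at all --- it is imported wholesale from the companion paper (``We have the following lemma from \cite{LS1}''), so your attempt has to be judged as a standalone proof. As such, it is incomplete. Your first two steps are sound: the closed form $L(E_b,J)=\max\bigl(0,\max_i(i-N(c_i))\bigr)$ correctly unwinds the definition (the $u$-values of an element of $\cE$ are pairwise distinct, so the counting is unambiguous, with the proviso that $N$ must count only the entries of the truncation $E_b(h)$, not of all of $E_b$), and the count showing that at least $s-i_1+1$ of the values $u'_{t_1},\dots,u'_{t_s}$ lie strictly below $u^a_{\sigma_a(i_1)}$ and do not occur among the values of $E_a(h)$ is correct. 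But the third step --- the only step in which the hypotheses $E_b\leq E_a$ and maximality of $E_a$ in $\cE(||E_a||)$ actually enter --- is a plan, not a proof (``I would then track how \dots''), and you yourself concede it is the main obstacle. That concession is accurate: the lemma is not proved.

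The gap is fatal, and your fallback claim that the bare count ``only yields'' the weak inequality $L(E_b,K)\geq L(E_b,||E_a(h)||)+s-i_1$ is itself false: the count gives a \emph{lower} bound on $L(E_b,K)$ but no \emph{upper} bound on $L(E_b,||E_a(h)||)$, and relating the two is precisely where maximality is needed. Concretely, drop maximality and take $h=3$, $h_a=h_b=4$, $E_a$ with values $2,3,4$ in positions $1,2,3$, value $1$ in position $4$, and all weights equal to $10$; take $E_b$ with values $10,20,30$ in positions $1,2,3$ and all weights $0$. Then $E_b\leq E_a$ (the weights dominate entrywise), $u'_2=2=u^a_{\sigma_a(1)}$, so $i_2=2>i_1=1$, and $L(E_b,||E_a(3)||)=3$; yet for $s=1$, $t_1=1$, $K=\bpartial^k(200,100,1|\,v_1,v_2,v_3)$ one computes $L(E_b,K)=1\not>3+1-1$. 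So neither the weak nor the strict inequality follows from counting alone; maximality --- which fails in this configuration, since a maximal $E_a$ would move the value $4$ into position $4$ --- is exactly what forces the interleaving of the values of $E_b$ among those of $E_a$ on which the statement rests. Note also that Corollary \ref{cor:lrnumber2}, which you invoke to ``localize the threshold,'' only bounds $L(E_b,\cdot)$ on sequences in $\cW^L_s(E_b,\cdot)$, i.e.\ sequences built from values of $||E_a||$ that \emph{are} greater than $E_b$; the sequence $K$ satisfies neither condition (the whole purpose of the lemma, e.g.\ in the proof of Lemma \ref{lem:base20}, is to certify that $K$ is \emph{not} greater than the relevant $E$), so it cannot play the role you assign to it. The missing exchange argument exploiting maximality is the actual substance of the lemma.
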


The following lemmas are obvious:
\begin{lemma}\label{lemma:order1}If $J_1\prec J_2\prec \cdots \prec J_n$, $\sigma$ is a permutation of $\{1,\dots ,n\}$, then
	$$J_1J_2\cdots J_n\prec J_{\sigma(1)}\cdots J_{\sigma(n)}.$$
\end{lemma}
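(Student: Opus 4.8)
The plan is to apply the definition of the lexicographic order on $\cM(\cS)$ directly, by locating the first coordinate at which the two length-$n$ words $J_1 J_2 \cdots J_n$ and $J_{\sigma(1)} \cdots J_{\sigma(n)}$ disagree. Note first that the hypothesis $J_1 \prec J_2 \prec \cdots \prec J_n$ forces the $J_i$ to be pairwise distinct, so the two words agree in every coordinate exactly when $\sigma = \id$; in that case the products coincide and the asserted inequality must be read as an equality. I would therefore state the lemma with $\preceq$ (or restrict to $\sigma \neq \id$) and assume $\sigma \neq \id$ in what follows.

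Assuming $\sigma \neq \id$, let $i_0$ be the least index with $\sigma(i_0) \neq i_0$. The first step is to observe that $\sigma(i) = i$ for every $i < i_0$, so the two words carry identical letters $J_i = J_{\sigma(i)}$ in all positions before $i_0$. The second step is to pin down the discrepancy at position $i_0$: since $\sigma$ is a bijection fixing $1, \dots, i_0 - 1$, its value $\sigma(i_0)$ lies in $\{i_0, i_0+1, \dots, n\}$, and by minimality of $i_0$ it differs from $i_0$, whence $\sigma(i_0) > i_0$. Strict monotonicity of the sequence then yields $J_{i_0} \prec J_{\sigma(i_0)}$.

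Combining these two observations with the definition of $\prec$ on $\cM(\cS)$ --- agreement in all coordinates before $i_0$, together with $J_{i_0} \prec J_{\sigma(i_0)}$ at coordinate $i_0$ --- gives $J_1 J_2 \cdots J_n \prec J_{\sigma(1)} \cdots J_{\sigma(n)}$, as desired.

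I do not expect any genuine obstacle: the statement merely records that a strictly increasing word is the lexicographically smallest rearrangement of its letters, which is why the paper calls it obvious. The only point needing care is the degenerate case $\sigma = \id$ noted above, where the inequality degenerates to equality.
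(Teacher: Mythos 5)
Your proof is correct, and it is essentially the only argument available: the paper itself offers no proof (it declares the lemma obvious alongside Lemma \ref{lemma:order2}), and your locate-the-first-discrepancy argument is precisely the standard verification that a strictly increasing word is the lexicographically least rearrangement of its letters. Your observation that the statement as written fails for $\sigma = \id$ (where the two products coincide, so strict $\prec$ cannot hold) is a legitimate minor correction to the paper's formulation, properly handled by reading the conclusion as $\preceq$ or excluding the identity permutation.
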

\begin{lemma} \label{lemma:order2}If $K_1K_2\cdots K_k \prec J_1\cdots J_l$, then
	$$K_1K_2\cdots K_{s-1}JK_s\cdots K_k \prec J_1\cdots J_{s-1}JJ_s\cdots J_l. $$
\end{lemma}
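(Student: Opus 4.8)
The plan is to argue directly from the definition of the lexicographic order on $\cM(\cS)$, exploiting the fact that the order is governed entirely by the first index at which two words disagree. Since $K_1\cdots K_k\prec J_1\cdots J_l$, there is a least index $i_0$ with $K_i=J_i$ for all $i<i_0$ and either (i) $i_0\le\min(k,l)$ and $K_{i_0}\prec J_{i_0}$, or (ii) $i_0=k+1$, $l>k$, and the $K$-word is a proper prefix of the $J$-word. Inserting a single letter $J$ at position $s$ in each word leaves the first $s-1$ letters untouched, places $J$ in the new slot $s$ of both words, and shifts every remaining letter one place to the right; I would simply track what this does to $i_0$. I would also record at the outset that $s$ is assumed to be a legal insertion position for both words, namely $1\le s\le k+1$ and $s\le l+1$, which is automatic in the intended applications.

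First I would treat the case $i_0\le s-1$, where the point of disagreement lies strictly before the insertion (so necessarily in subcase (i), since (ii) forces $i_0=k+1\ge s$). Here positions $1,\dots,s-1$ of both inserted words coincide with the original letters, so the first disagreement is still at $i_0$ with $K_{i_0}\prec J_{i_0}$, and hence the inserted $K$-word precedes the inserted $J$-word. Next I would treat $i_0\ge s$: now $K_i=J_i$ for all $i\le s-1$, so positions $1,\dots,s-1$ of the two inserted words agree, position $s$ carries the common letter $J$, and from position $s+1$ on the two words reproduce the original ones shifted by one. In subcase (i) the original disagreement at $i_0$ reappears at position $i_0+1$, giving $K_{i_0}\prec J_{i_0}$ after a block of agreement; in subcase (ii) the inserted $K$-word, now of length $k+1$, is a proper prefix of the inserted $J$-word, of length $l+1>k+1$. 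Either way the inserted $K$-word precedes the inserted $J$-word, which exhausts all cases and yields the claim.

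There is essentially no obstacle here beyond careful index bookkeeping: the content is the elementary stability of the lexicographic order under insertion of the same letter at the same position, which is exactly why the paper files it under \emph{obvious}. The only points demanding a moment's care are that the insertion shifts the tail by exactly one, so a disagreement originally at $i_0\ge s$ migrates to $i_0+1$ without altering the comparison $K_{i_0}\prec J_{i_0}$, and that the prefix case (ii) must be handled separately, since there the order comes from one word being a proper initial segment of the other rather than from a strict inequality between letters.
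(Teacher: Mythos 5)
Your proof is correct: the paper states this lemma without proof (it is filed under ``The following lemmas are obvious''), and your case analysis on the position of the first disagreement relative to the insertion point $s$ is exactly the routine verification from the definition of the lexicographic order that the paper leaves to the reader. Both the treatment of the migrated disagreement at position $i_0+1$ and the separate handling of the proper-prefix case are handled correctly, so there is nothing to add.
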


\section{Proof of Lemma \ref{lem:base20}}\label{sec:base}
In this section we prove Lemma \ref{lem:base20}. This is generalization of Lemma 2.3 of \cite{LS1}, which is the case when $J_1,\dots, J_b$ lie in the subset $\cJ_h\subseteq \cJ_h^+$. We proceed by induction, proving first the case $b=2$, and then assuming the result for $b-1$. We will repeatedly make use of the technical statements from the previous section, namely Lemmas \ref{lemma:critgreat0}, \ref{lemma:critgreat}, and \ref{lem:lrnumber}, and Corollaries \ref{cor:critgreat1},  \ref{cor:lrnumber1}, and \ref{cor:lrnumber2}. The proof is more complicated here because we need to consider the following cases separately:
\begin{enumerate}
\item $J_1,\dots, J_{b-1} \in \cJ^L_h$, 
\item $J_1,\dots,J_{b-1} \in \cJ^R_h$, 
\item $J_1,\dots, J_c\in \cJ^L_h$ and $J_{c+1},\dots, J_{b-1} \in \cJ^R_h$, for some $1\leq c < b-1$,
\item $J_{b-1} \notin \cJ^L_h \cup \cJ^R_h$. 
\end{enumerate}

By Lemma \ref{lemma:order1}, we can assume each monomial is expressed as an ordered product $J_{1}J_{2}\cdots J_{b}$ with $J_{a}\prec J_{a+1}$.
For $\alpha \in \cM(\cJ_h^+)$,
let $$\R_h^+(\alpha)=\{\sum c_i\beta_i\in\R_h^+|c_i\in\mathbb Z, \beta_i\in \cM(\cJ_h^+),\beta_i\prec \alpha, \beta_i\neq \alpha\},$$
the space of linear combinations of elements preceding $\alpha$ in $\cM(\cJ^+_h)$ with integer coefficients. 
The following lemma is from \cite{LS1}.
\begin{lemma}\label{lemma:straight0}
	Let $E\in \cE$, $J_a$ and $J_b$ in $\cJ$ with $J_a\prec J_b$, and suppose that $E_a$ is the largest element in $\cE(J_a)$ such that $E\leq E_a$. If $J_b$ is not greater than $E_a$, then $J_aJ_b=\sum K_if_i$ with $K_i\in \cJ$, $f_i\in \R$ such that $K_i$ is either smaller than $J_a$, or $K_i$ is not greater than $E$. 
\end{lemma}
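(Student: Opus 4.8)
The plan is to deduce the straightening from a single application of the quadratic relation \eqref{eqn: relation4'} of Lemma \ref{lemma:straighten}, with the symmetrization data read off from the failure of the domination criterion. Write $r=sz(J_a)$ and $h'=sz(J_b)$; since $J_a\prec J_b$ we have $h'\le r$, so I may take the size-$r$ factor in \eqref{eqn: relation4'} to be $J_a$ and the size-$h'$ factor to be $J_b$, with $m=wt(J_a)+wt(J_b)$ and $k_0=wt(J_b)$, so that $\sigma=\sigma'=\id$, $k=k_0$ produces exactly $J_aJ_b$. By Lemma \ref{lemma:critgreat0} the hypothesis that $J_b$ is not greater than $E_a$ reads $wt(E_a(h'))-wt(J_b)<L(E_a,J_b)+R(E_a,J_b)$; the block sizes $i_1,i_2,j_1,j_2$ are chosen just large enough that $l_0=i_1+i_2+j_1+j_2-2r-1\ge 0$ and that the pool of shuffled indices covers the positions pinpointed by $L(E_a,J_b)$ and $R(E_a,J_b)$. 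If the deficit sits on the row side I would set $j_1=j_2=0$ (a pure row shuffle), and dually $i_1=i_2=0$ on the column side, so that the relevant part of Lemma \ref{lem:lrnumber} applies.

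First I would normalize the free coefficients in \eqref{eqn: relation4'} by setting $a_{k_0}=1$ and $a_k=0$ for $k_0<k\le k_0+l_0$; Lemma \ref{lemma:straighten} then fixes the remaining $a_k$ (for $k<k_0$ or $k>k_0+l_0$) as integers so that the whole symmetrized combination lies in $\R[r+1]$. Using the Remark after Lemma \ref{lemma:straighten} that each monomial occurs with coefficient $\pm a_k$, I can solve this identity for $J_aJ_b$ and obtain $J_aJ_b=\sum K_if_i$, the sum ranging over the generators of $\R[r+1]$ and over the shuffled products coming from $(k,\sigma,\sigma')\ne(k_0,\id,\id)$ with $a_k\ne 0$. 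In each product I designate one of the two minor factors as $K_i$ and absorb the other into $f_i\in\R$.

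The terms landing in $\R[r+1]$ have $sz(K_i)=r+1>r=sz(J_a)$, and since a larger size is smaller in the order $\prec$ on $\cJ$, they satisfy $K_i\prec J_a$. Among the remaining terms, every one with $k>k_0$ (necessarily $k>k_0+l_0$ after the normalization) has first factor of derivative order $m-k<m-k_0=wt(J_a)$ and size $r$; taking $K_i$ to be this first factor, and using that equal weight is compared before the indices in the order $\prec$, gives $K_i\prec J_a$ outright. The same is true of any surviving term whose designated factor has strictly smaller weight at equal size. This disposes of every term for which size or derivative order already forces the drop.

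The genuinely index-driven terms are those with $k\le k_0$ (including $k=k_0$ with $\sigma$ or $\sigma'$ nontrivial), where no factor is smaller than $J_a$ by size or weight alone; for these I take $K_i$ to be the re-sorted image of $J_b$ and prove it is not greater than $E$ by verifying $wt(E(h'))-wt(K_i)<L(E,K_i)+R(E,K_i)$ in Lemma \ref{lemma:critgreat0}. The lower bounds on $L(E,K_i)$ and $R(E,K_i)$ come from Lemma \ref{lem:lrnumber} applied with its two elements taken to be $E$ and $E_a$ (legitimate because $E_a$ is by definition the largest representative of $J_a$ dominating $E$): swapping an index of $J_b$ at the violation position for a top index of $J_a$ forces the corresponding $L$- or $R$-number to jump, while Lemma \ref{lemma:replace} controls the untouched indices. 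Feeding in the strict deficit $wt(E_a(h'))-wt(J_b)<L(E_a,J_b)+R(E_a,J_b)$ together with the identity $L(E,\lVert E_a(s)\rVert)+R(E,\lVert E_a(s)\rVert)=wt(E_a(s))-wt(E(s))$ of Corollary \ref{cor:lrnumber1}, one sees that the index jump outweighs the loss incurred in passing the base point from $E_a$ down to $E$, giving the required strict inequality for $K_i$ and $E$. This balancing of the index jump against the base-point change and the weight redistribution, carried out simultaneously on the left and the right, is the crux and the step I expect to be the main obstacle; it reproduces the estimates of \cite{LS1}, with Corollary \ref{cor:lrnumber2} used to identify the extremal representatives.
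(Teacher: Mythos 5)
You should know at the outset that this paper never proves Lemma \ref{lemma:straight0}: it is imported verbatim from the prequel \cite{LS1} (the two-sided straightening lemma there) and is only invoked here, in Case 4 of the proof of Lemma \ref{lem:base20}. So your proposal can only be measured against the strategy of \cite{LS1}, which it does resemble in outline (shuffle relation plus $L/R$-estimates). However, it contains a concrete, disabling error. You correctly record that the blocks must satisfy $l_0=i_1+i_2+j_1+j_2-2r-1\geq 0$, and then propose, when ``the deficit sits on the row side'', to take $j_1=j_2=0$ (and dually $i_1=i_2=0$ for the column side). These two requirements are incompatible: since $i_1\leq r$ and $i_2\leq h'\leq r$, taking $j_1=j_2=0$ forces $l_0\leq h'-r-1\leq -1$, so the range $k_0\leq k\leq k_0+l_0$ of prescribable coefficients is empty and your normalization $a_{k_0}=1$ is not an admissible input to part (3) of Lemma \ref{lemma:straighten}; that lemma then only yields the vacuous relation in which all $a_k$ may be taken to be $0$. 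The relation \eqref{eqn: relation4'} is intrinsically two-sided: it requires $i_1+i_2+j_1+j_2\geq 2r+1$, i.e.\ simultaneously large shuffle blocks on the row and the column side, and this is unavoidable here because for two factors that both lie in $\cJ$ (rather than $\cJ_h^L$ or $\cJ_h^R$) none of the one-sided relations \eqref{eqn:relation1+}--\eqref{eqn:relation4+} applies. Moreover the hypothesis ``$J_b$ is not greater than $E_a$'' is, by Lemma \ref{lemma:critgreat0}, the failure of a single inequality bounding a weight gap by the \emph{sum} $L(E_a,J_b)+R(E_a,J_b)$; the failure can be distributed between the two sides in any proportion, so there is in general no ``side where the deficit sits'', and the blocks $i_1,i_2,j_1,j_2$ must be chosen using the row and column violation positions jointly. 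This is precisely why the present lemma is harder than Cases 1--3 of Lemma \ref{lem:base20}, which are the cases a one-sided shuffle can reach.

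Second, even if the relation were set up correctly, the entire content of the lemma lies in the step you explicitly defer: showing that for $k\leq k_0$, and for $k=k_0$ with $(\sigma,\sigma')\neq(\id,\id)$, the shuffled size-$h'$ factor $K_i$ is not greater than $E$, by combining Lemma \ref{lem:lrnumber}, Lemma \ref{lemma:replace} and Corollaries \ref{cor:lrnumber1}, \ref{cor:lrnumber2} with the deficit inequality. Your disposal of the easy terms is fine -- the $\R[r+1]$ part consists of factors of size $r+1$, hence preceding $J_a$, and the terms with $k>k_0+l_0$ have upper factor of size $r$ and weight $m-k<wt(J_a)$, hence also preceding $J_a$ -- but those terms are harmless by design. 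The estimate you describe only qualitatively (``the index jump outweighs the loss incurred in passing the base point from $E_a$ down to $E$'') is exactly where the violating positions must be fed into the choice of blocks and where the inequalities must close up strictly; you acknowledge it as the main obstacle and do not carry it out. As it stands, the proposal is a plausible description of the shape of the argument in \cite{LS1}, with its reduction step misconfigured and its key step missing.
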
 

\begin{lemma}\label{lemma:case1+}
	Assume $J_1J_2$ is not standard.  If $J_1, J_2\in \cJ_h^L$, $J_1J_2$ can be written as a linear combination of ordered products of elements of $\cJ_h^L$ preceding $J_1$ with integer coefficients.\\
	If $J_1, J_2\in \cJ_h^R$ is not standard, $J_1J_2$ can be written as a linear combination of ordered products of elements of $\cJ_h^R$ preceding $J_1$ with integer coefficients.
\end{lemma}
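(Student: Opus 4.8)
The plan is to mimic the single-step straightening of Lemma~\ref{lemma:straight0}, but with the pure left-hand relations \eqref{eqn:relation3+} in place of the double-sequence relations. By Lemma~\ref{lemma:order1} I may assume $J_1\prec J_2$, and I write $J_i=\bpartial^{k_i}(u^i_h,\dots,u^i_1|$ with $u^i_1<\cdots<u^i_h$, so that $k_1\le k_2$. Let $E_1$ be the largest element of $\cE(J_1)$; concretely $E_1$ places all of the weight $k_1$ on the largest index and weight $0$ elsewhere. By the definition of a standard monomial (its clause~(5) together with the domination requirement), the hypothesis that $J_1J_2$ is not standard says exactly that $J_2$ is not greater than $E_1$. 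A direct computation with this description of $E_1$ (cf.\ Lemma~\ref{lemma:critgreat}) shows that non-standardness forces $k_2-k_1$ to be strictly smaller than the number of positions $i$ with $u^2_i<u^1_i$; in particular $k_2-k_1\le h-1$.

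First I would pin down the straightening relation. Regarding $J_1$ as the top row and $J_2$ as the bottom row, a relation of the form \eqref{eqn:relation3+} with shuffle sizes $(i_1,i_2)$ chosen so that $i_1+i_2=h+(k_2-k_1)$ has $l_0=k_2-k_1-1$, and the bound $k_2-k_1\le h-1$ guarantees $i_1+i_2\le 2h$, so admissible sizes exist (with the degenerate choice $i_1+i_2=h+1$, $l_0=0$ when $k_1=k_2$). Every term of such a relation is an integer multiple of a product of two elements of $\cJ_h^L$, once each row is sorted into increasing order and rows with a repeated index are discarded. Using the freedom to prescribe $a_{k_0},\dots,a_{k_0+l_0}$ supplied by Lemma~\ref{lemma:straighten}, I place the free window at $[k_1+1,k_2]$, set $a_{k_2}=1$, and set the remaining free coefficients to $0$. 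The identity shuffle at $k=k_2$ then reproduces $J_1J_2$ with coefficient $\pm1$---the factor $\tfrac1{i_1!i_2!}$ being cancelled by the $i_1!i_2!$ repetitions of each monomial recorded in the Remark after Lemma~\ref{lemma:straighten}---so I can solve for $J_1J_2$ as an integer combination of the remaining terms.

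Next I would verify that each remaining term, rewritten with its two factors in increasing $\prec$-order, begins with a letter strictly $\prec J_1$; since $J_1$ is a single letter this is equivalent to the whole word preceding $J_1$. The two factors of the term at parameter $k$ carry weights $m-k$ and $k$ with $m=k_1+k_2$. For $k<k_1$ or $k>k_2$ one has $\min(m-k,k)<k_1=\wt(J_1)$, so the $\prec$-smaller factor has weight below that of $J_1$ and is automatically $\prec J_1$ by clause~(3) of the order on $\cJ_h^L$. This is precisely why the free window is positioned to annihilate the intermediate-weight terms with $k_1<k<k_2$, for which $\min(m-k,k)>k_1$ and this weight comparison would fail. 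The only borderline terms therefore sit at $k=k_2$ (and, if its coefficient is nonzero, at $k=k_1$), where the $\prec$-smaller factor has weight exactly $k_1$ and the comparison with $J_1$ is purely on indices.

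This index comparison for the borderline terms is the crux and is the weighted analogue of the classical Plücker straightening law; I expect it to be the main obstacle. Here I would arrange $(i_1,i_2)$ so that the shuffled pool contains every position at which the indices of $J_2$ fall below those of $J_1$, and then use the maximality of $E_1$ together with the $L$-number estimates of Lemmas~\ref{lemma:replace} and \ref{lem:lrnumber}: replacing the lower indices of $J_1$ by a genuinely different subset of the pool strictly increases the relevant $L$-number, which by Lemmas~\ref{lemma:critgreat} and \ref{cor:critgreat1} drives the $\prec$-smaller of the two resulting factors strictly below $J_1$. The failure of $J_2$ to be greater than $E_1$ is exactly what makes each such estimate strict, so that no non-identity shuffle ties with or exceeds $J_1$ at the top of the order---the bookkeeping needed to rule out these ties is where the hypothesis is used in full. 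Finally, the case $J_1,J_2\in\cJ_h^R$ is identical after replacing \eqref{eqn:relation3+} by the right-hand relation \eqref{eqn:relation4+}, so I would dispatch it by symmetry.
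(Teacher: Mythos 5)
Your framework is the paper's framework---reduce to $J_1\prec J_2$ with weights $k_1\le k_2$, note that non-standardness means $J_2$ is not greater than the largest $E_1\in\cE(J_1)$, straighten with a relation of type \eqref{eqn:relation3+}, kill a window of coefficients, and dispose of the terms with $k<k_1$ or $k>k_2$ by the weight-first comparison---and those parts are sound. The genuine gap is your pool size $i_1+i_2=h+n$ (where $n=k_2-k_1$), which makes the free window $[k_1+1,k_2]$ one slot too short. The coefficient $a_{k_1}$ is then \emph{not} prescribable, and it is in general nonzero: for $n=1$ one has $l_0=0$, and the construction in the proof of Lemma~\ref{lemma:straighten} gives $a_k=C_k^0\,b_{0,0}\,a_{k_2}=1$ for \emph{every} $k$, in particular $a_{k_1}=1$. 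So the level $k=k_1$ survives, and it cannot be handled the way you propose: at $k=k_1$ the weight-$k_1$ factor sits on the \emph{lower} row, so it contains every non-underlined index of $J_2$, and these may exceed all indices of $J_1$. Concretely take $h=3$, $J_1=\bpartial^{0}(5,4,3|$, $J_2=\bpartial^{1}(10,2,1|$, which is non-standard since $u^2_2=2<3=u^1_1$. Whatever underlining with $i_1+i_2=4$ you choose, either $10$ stays in the fixed part of the lower row or it lies in the pool; in both cases the $k=0$ level contains terms whose weight-$0$ factor has the form $\bpartial^{0}(10,\cdot,\cdot|\succ J_1$ (for instance the ``weight swap'' term $\bpartial^{0}(10,2,1|\,\bpartial^{1}(5,4,3|$). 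Distinct shuffles are distinct monomials in the polynomial generators of $\R^+$ and terms from different levels $k$ have different weight pairs, so nothing cancels; and no estimate from Lemmas~\ref{lemma:replace}, \ref{lemma:critgreat} or \ref{lem:lrnumber} can force a factor that genuinely dominates $J_1$ to precede $J_1$. Your identity therefore expresses $J_1J_2$ as a combination containing words that do \emph{not} precede $J_1$, and the proof does not close.

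The repair is exactly the paper's choice, and non-standardness is what makes it admissible: by Lemma~\ref{lemma:critgreat} there is a violation position $i>n$ with $u^2_i<u^1_{i-n}$, hence $n<L(E_1,J_2)\le h$. Underline the top $i_1=h-i+n+1\le h$ indices $u^1_h,\dots,u^1_{i-n}$ of $J_1$ and the bottom $i_2=i\le h$ indices $u^2_i,\dots,u^2_1$ of $J_2$, so that $i_1+i_2=h+n+1$ and $l_0=n$; the window can then be placed at $[k_1,k_2]$, with $a_{k_2}=1$ and $a_{k_1}=\dots=a_{k_2-1}=0$, which annihilates all intermediate levels \emph{including} $k=k_1$. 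This choice also removes what you called the main obstacle at $k=k_2$, with no $L$-number bookkeeping at all: every pooled index of $J_2$ satisfies $u^2_j\le u^2_i<u^1_{i-n}\le u^1_{j'}$ for every pooled index of $J_1$, so any non-identity shuffle pushes a strictly smaller index into the weight-$k_1$ factor, whose decreasingly sorted word is then lexicographically smaller than that of $J_1$; hence that factor is $\prec J_1$ on the spot.
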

\begin{proof} We only prove the first statement, since the proof of the second statement is similar. Assume $J_i=\bpartial^{n_i}(u^i_{h},\dots,u^i_2,u^i_1|$ , $i=1,2$. Let $n=n_2- n_1\geq 0 $. Since $J_1J_2$ is not standard,
	by Lemma \ref{lemma:critgreat},
	there is an integer $i>n$, $u^2_{i}<u^1_{i-n}$. By Equation (\ref{eqn:relation3+}),
	$$\sum \epsilon a_{k} \left(
	\begin{array}{c}
	\bpartial^{m-k} (\underline{u^1_h,\dots,u^1_{i-n}},u^1_{i-n-1},\dots,u^1_1| \\
	\bpartial^{k} (u^2_{h},\dots,u^2_{i+1},\underline{u^2_{i},\dots,u^2_1}|
	\end{array}
	\right)
	=0.
	$$
	Here $m=n_1+n_2$,  $a_{n_2-l}=\delta_{0,l}$ for $0 \leq l\leq n$. In the above equation:
	\begin{itemize}
		\item
		All the terms with $k=n_2$ precede $J_1$ in the lexicographic order except $J_1J_2$ itself;
		\item
		All the terms with $k=n_2-1,\dots,n_1$ vanish since $a_{k}=0$;
		\item
		All the terms with $k=n_2+1,\dots, m$ precede $J_1$ since the weight of the upper $\bpartial$-list is $m-k<n_1$.
		\item
		All the terms with $k=0,\dots, n_1-1$ precede $J_1$ after exchanging the upper $\bpartial$-list and the lower $\bpartial$-list since the weight of the lower $\bpartial$-list is $k<n_1$.
	\end{itemize}
	Thus, $J_1J_2$ can be written as a linear combination of ordered products of elements of $\cJ_h^L$ preceding $J_1$ with integer coefficients.
\end{proof}

\begin{proof}[Proof of Lemma \ref{lem:base20}.]
	Since $J_1$ is always standard, we can asssume $b\geq 2$. Assume $J_1\cdots J_b$ is not standard.
	
	 If $J_b\in J_h^R\cup J_h^L$, then $J_i\in J_h^R\cup J_h^L$ for $1\leq i\leq b$.
	By Lemma \ref{lemma:case1+} and Corollary \ref{cor:critgreat1}, it is easy to see that $J_1\cdots J_b \in \R_h^+(J_1\cdots J_{b-1})$.
	
	If $J_b=\bpartial^{n_b} (u^b_{h_b},\dots,u^b_1|v^b_1,v^b_2,\dots,v^b_{h_b})\notin J_h^R\cup J_h^L$, then $h_b<h$. We show the lemma by induction on $b$.
	
	For $b=2$, $J_1J_2$ is not standard by assumption.
	\begin{enumerate}
		\item If $J_1=\bpartial^{n_1} (u^1_h,\dots,u^1_1|$, 
	there is an integer $i>n_2$ such that $u^2_i<u^1_{i-n_2}$. Let $m=n_1+n_2$, by Equation (\ref{eqn:relation1+}),
		$$\sum \epsilon a_{k} \left(
		\begin{array}{ccc}
		\bpartial^{m-k} (\underline{u^1_h,\dots,u^1_{i-n_2}},u^1_{i-n_2-1},\dots,u^1_1&|& \\
		\bpartial^{k} (u^2_{h_2},\dots,u^2_{i+1},\underline{u^2_{i},\dots,u^2_1}&|&v^2_1, v^2_2,\dots,v^2_{h_2})
		\end{array}
		\right)
		=0
		$$
		with $a_{n_2-l}=\delta_{0,l}$ for $0\leq l\leq n_2$.
		In the above equation,
		\begin{enumerate}
			\item
			All the terms with $k=n_2$ precede $J_1$ in the lexicographic order except $J_1J_2$ itself;
			\item
			All the terms with $k=0,\dots,n_2-1$ vanish since $a_{k}=0$;
			\item
			All the terms with $k=n_2+1,\dots, m$ precede $J_1$, since the weight of the upper $\bpartial$-list is $m-k<n_1$.
		\end{enumerate}
		Thus, $J_1J_2\in \R_h^+(J_1)$.
		\item If $J_1=\bpartial^{n_1}|v^1_1,\dots,v^1_h)$,  similar to case (1), we can show $J_1J_2\in \R_h^+(J_1)$.
		\item If $J_1\notin \cJ_h^R\cup \cJ_h^L$, then $J_1\in \cJ$.  By Lemma 7.1 of \cite{LS1},
		$J_1J_2\in \R(J_1)$, so  $J_1J_2\in \R_h^+(J_1)$.
	\end{enumerate}

For $b>2$, let $a=b-1$. By induction, the lemma is true for $J_1\cdots J_{a}$: if  $J_1\cdots J_{a}$ is not standard, then  $J_1\cdots J_{a}\in\R_h^+(J_1\cdots J_{a-1})$.
So we can assume $J_1\cdots J_{a}$  and $J_1\cdots J_{b-2}J_b$ are standard by Lemma \ref{lemma:order2}. Let $E_1\cdots E_{a}\in\cS\cM(\cJ_h^+)$ be the standard ordered product of elements of $\cE_h^+$ corresponding to $J_1\cdots J_{a}$. To show $J_1\cdots J_b\in \R_h^+(J_1\cdots J_a)$, we only need to show \\
$(*)$ \quad \textsl{
$J_aJ_b=\sum K_if_i$ with $K_i\in \cJ_h^+$ and $f_i\in \R_h^+$ such that either 
$K_i$ is smaller than $J_a$ or $J_1\cdots J_{b-2}K_i$ is not standard.}\\ 
In the first case $J_1\cdots J_{b-2}K_if_i\in \R_h^+(J_1\cdots J_a)$  and in the second case $J_1\cdots J_{b-2}K_i\in\R_h^+(J_1\cdots J_{b-2})$ by induction, then 
$J_1\cdots J_{b-2}K_if_i\in\R_h^+(J_1\cdots J_{a})$.

Now let us show the statement $(*)$. The proof is divided into four cases.\\
	\textsl{Case 1:} $J_{a}\in\cJ_h^L$, then $J_i\in\cJ_h^L$ for $1\leq i\leq a$.

	Assume $E_i=((u^i_h,k^i_h),\dots, (u^i_1,k^i_1)|$. Let $m_i=wt(E_i(h_b))$ and  $n_i=wt(E_i)$. Let $\sigma_a$ be the permutations of ${1,2,\dots, h_b}$
	such that $u^a_{\sigma_a(j)}<u^a_{\sigma_a(j+1)}$ for $1\leq j\leq h_{b-1}$. Let
	$u'_{h},\dots,u'_1$ be a permutation of $u^a_{h},\dots,u^a_1$ such that $u'_{i}\leq u'_{i+1}$. Let $m=n_a+n_b$. 
		\begin{enumerate}
		\item If $n_b<m_a$. By Equation (\ref{eqn:relation1+}),
		 \begin{equation}\label{eqn:91}
		\sum_{0\leq i<h_b}\sum_{\sigma}\frac {(-1)^{i}\sign(\sigma)}{i!(h_b-i)!}\sum_{k=0}^{m} \epsilon a^{i}_{k}
	\left(
		\begin{array}{ccc}
		\bpartial^{m-k}(\underline{u_{h}^a,\dots,u_{i+1}^a},u^b_{\sigma{(i)}},\dots,u_{\sigma(2)}^b,u^b_{\sigma(1)}&|& \\
		\bpartial^k(\underline{u_{\sigma(h_b)}^b,\dots,u^b_{\sigma{(i+1)}},u^a_{i},\dots,u^a_{1}}&|& v^b_{1},\dots,v^b_{h_b})
		\end{array}
		\right)=0.
		\end{equation}

		Here  $a_k^{i}$ are integers and $a_{n_b-l}^{i}=\delta_{0,l}$ for $0 \leq l<h_b -i$, and $\sigma$ are permutations of $\{1,\dots,h_b\}$.
		In the above equation:
		\begin{enumerate}
			\item
			All the terms with $k=n_b+1,\dots, n$ precede $J_a$ since the weight of the upper $\bpartial$-list is $m-k<n_a$.
			\item
			The terms with $k=n_b$ are $J_aJ_b$
			and the terms with the lower $\bpartial$-lists
			$$K_0=\bpartial^{n_b}(u_{i_{h_b}}',\dots,u_{i_2}',u'_{i_1}|v^b_{1},v^b_{2},\dots,v^b_{h_b}).$$
			All of the other terms cancel. Let 
				$$K'_0=\bpartial^{n_b}(u_{i_{h_b}}',\dots,u_{i_2}',u'_{i_1}|.$$
			By Corollary \ref{cor:lrnumber1} and \ref{cor:lrnumber2}
			$$L(E_{b-2}, K'_0))\geq L(E_{b-2}, ||E_a(h_b)||))=wt(E_a(h_b))-wt(E_{b-2}(h_b))>n_b-m_{b-2}.$$
			By Lemma \ref{lemma:critgreat}, $K'_0$ is not greater than $E_{b-2}$.
		So $J_1\cdots J_{b-2}K_0$ is not standard.
			\item
			The terms with $k<n_b$ in Equation (\ref{eqn:91}) vanish unless $h_b-i\leq  n_b-k$. In this case the lower $\bpartial$-lists of the terms are
			$$K_1=\bpartial^k(\underline{u_{\sigma(h_b)}^b,\dots,u^b_{\sigma{(i+1)}},u'_{s_i},\dots,u'_{s_1} }|v^b_{1},v^b_{2},\dots,v^b_{h_b}).$$
			Let 	$$K'_1=\bpartial^k(\underline{u_{\sigma(h_b)}^b,\dots,u^b_{\sigma{(i+1)}},u'_{s_i},\dots,u'_{s_1}}|.$$
			By Lemma \ref{lemma:replace},
		$$L(E_{b-2}, ||K'_1||)\geq L(E_{b-2},|| E_a(h_b)||)-(h_b-i)=m_a-m_{b-2}-(h_b-i)>k-m_{b-2}.$$
			By Lemma \ref{lemma:critgreat}, $K'_1$ is not greater than $E_{b-2}$.
		So $J_1\cdots J_{b-2}K_1$ is not standard.	
		\end{enumerate}
		By the above analysis, $J_1\cdots J_b\in \R_h^+(J_1\cdots J_a)$.
		\item
		If $n_b\geq m_a$. 
		
		 Let $J'_b=\bpartial^{n_b} (u^b_{h_b},\dots,u^b_1|$ and $i_0=L(E_a, J'_b).$ 
		Since $J_1\cdots J_b$ is not standard, $J'_b$ is not greater than $E_a$.
 By Lemma \ref{lemma:critgreat}, 
 \begin{equation}\label{eqn:numberi0}
 i_0>n_b-m_a\geq 0.
 \end{equation}
  There is $i_0\leq i_1\leq h_b$, such that $u^b_{i_1}<u^a_{\sigma_a(i_1-i_0+1)}$.

		Assume $u'_{i_2}=u^a_{\sigma_a(i_1-i_0+1)}$, then $i_2\geq i_1-i_0+1$.
		By Equation (\ref{eqn:relation1+}),
		$$0=\sum_{\substack{i_1>s\\ i_2-1\geq s}}\sum_{\sigma}\frac {(-1)^{s}\sign(\sigma)}{s!(i_2-1-s)!}\sum_{k=0}^{n}\epsilon a^{s}_{k} $$
		\begin{equation}\label{eqn:relationterms2+0}
		\left( \begin{array}{ccc}
		\bpartial^{m-k}(\underline{u'_{h} ,\dots,u'_{i_2} ,u'_{\sigma(i_2-1)},\dots, u'_{\sigma(s+1)},u_{s}^b,\dots,u^b_{1}}&|&\\
		\bpartial^k(u^b_{h_b},\dots,u^b_{i_1+1},\underline{u^b_{i_1},\dots,u^b_{s+1}},u'_{\sigma(s)},\dots,u'_{\sigma(1)}&|& v^b_1,\dots, v^b_{h_b} )
		\end{array}\right).
		\end{equation}
		Here  $a_k^{s}$ are integers and $a_{n_b-l}^{s}=\delta_{0,l}$ for $0 \leq l<i_1-s$, and
		$\sigma$ are permutations of $\{1,\dots,i_2-1\}$.
		$$0=\sum_{\substack{i\geq i_1> s\\ i_2-1\geq s}}\sum_{\sigma, \sigma'}\frac {(-1)^{i+s}\sign(\sigma')\sign(\sigma)}{(i-i_1)!(h_b-i)!s!(i_2-1-s)!}\sum_{k=0}^{n}\epsilon a^{i,s}_{k} $$
		
		\begin{equation}\label{eqn:relationterms2+1}
		\left( \begin{array}{ccc}
		\bpartial^{n-k}(u^b_{\sigma'(h_b)},\dots,\underline{u^b_{\sigma'(i)},\dots u^b_{\sigma'(i_1+1)},u^b_{i_1}, \dots u^b_{1},u'_{h_a}, \dots  u'_{h_b+1}}
		&|&\\
		\bpartial^k(\underline{u'_{h_b},\dots, u'_{i_2},u'_{\sigma(i_2-1)}, \dots, u'_{\sigma(s+1)}},u'_{\sigma(s)}, \dots, u'_{\sigma(1)}&|&
		v^b_{1},v^b_{2} ,\dots v^b_{h_b})
		\end{array}\right).
		\end{equation}

		Here $a_k^{i,s}$ are integers, $a_{n_b-l}^{i,s}=\delta_{0,l}$ for $0 \leq l<i-s$,
		$\sigma$ are permutations of $\{1,\dots,i_2-1\}$, and $\sigma'$ are permutations of $\{i_1+1,i_1+2,\dots, h_b\}$.

		If $i_2> i_1-i_0+1$, we use Equation (\ref{eqn:relationterms2+0}) and if $i_2=i_1-i_0+1$ we use Equation (\ref{eqn:relationterms2+1}).
		
		In the above Equations:
		\begin{enumerate}
			\item
			All the terms with $k=n_b+1,\dots, n$ precede $J_a$ since the weight of the upper $\bpartial$-list is $m-k<n_a$.
			\item
			The terms with $k=n_b$ are $J_aJ_b$, the terms with the upper $\bpartial$-lists preceding $J_a$ (the upper $\bpartial$-lists are the $\bpartial$-lists given
			by replacing some $u'_{l}$, $l\geq i_2$ in $J_a$ by some $u^b_{k}$, $k\leq i_1$),
			and the terms with the lower $\bpartial$-lists
			$$K_0=\bpartial^{n_b}(u_{h_b}^b,\dots,u_{i_1+1}^b, u'_{\sigma_1(i_1)},\dots u'_{\sigma_1(1)}|v^b_{1}, v^b_{2},\dots,u^b_{h_b}).$$
			All of the other terms cancel. Let 
				$$K'_0=\bpartial^{n_b}(u_{h_b}^b,\dots,u_{i_1+1}^b, u'_{\sigma_1(i_1)},\dots u'_{\sigma_1(1)}|.$$
			
			If $i_2-1<i_1$, the terms of the form $K_0$ do not appear in Equations \eqref{eqn:relationterms2+0} and \eqref{eqn:relationterms2+1}.
			Otherwise  $i_2-1\geq i_1$, by Lemma \ref{lem:lrnumber},
			$$L(E_{b-2}, K'_0)>L(E_{b-2}, ||E_a(h_b)||)+ i_1-(i_1-i_0+1).
			$$
			By Corollary \ref{cor:lrnumber1},
			\begin{equation}\label{eqn:LnumberE}
			L(E_{b-2}, ||E_a(h_b)||)=m_a-m_{b-2}.
			\end{equation}
			Then by Equation (\ref{eqn:numberi0}),
			$$L(E_{b-2}, K'_0)\geq m_a-m_{b-2}+i_0>n_b-m_{b-2}.
        	$$		
			
        	By Lemma \ref{lemma:critgreat},  $K'_0$ is not greater than $E_{b-2}$.  So there is no $E_b'\in \cE(K_0)$ such that $E_{b-2}\leq E_b'$, and $J_1\cdots J_{b-2}K_0$ is not standard.	
		
			\item If $i_2>i_1-i_0+1$,
			the terms with $k<n_b$ in Equation (\ref{eqn:relationterms2+0}) vanish unless $i_1-s\leq  n_b-k$. In this case the lower $\bpartial$-lists of the terms are
			$$K_1=\bpartial^k(u^b_{h_b},\dots,u^b_{i_1+1},\underline{u^b_{i_1},\dots,u^b_{s+1}},u'_{\sigma(s)},\dots,u'_{\sigma(1)}|v^b_1,\dots, v^b_{h_b} ).$$
			The underlined elements can be any underlined elements in Equation (\ref{eqn:relationterms2+0}).
			Let 
				$$K'_1=\bpartial^k(u^b_{h_b},\dots,u^b_{i_1+1},\underline{u^b_{i_1},\dots,u^b_{s+1}},u'_{\sigma(s)},\dots,u'_{\sigma(1)}|.$$
				By Lemma \ref{lem:lrnumber},
				$$L(E_{b-2}, K'_1)>L(E_{b-2}, ||E_a(h_b)||)+ s-(i_1-i_0+1).
				$$
			Then by Equations (\ref{eqn:LnumberE}) and (\ref{eqn:numberi0}),
			$$L(E_{b-2}, K'_0)\geq m_a-m_{b-2}+i_0-(i_1-s)>k-m_{b-2}.
			$$		
			
			By Lemma \ref{lemma:critgreat},  $K'_1$ is not greater than $E_{b-2}$.  So there is no $E_b'\in \cE(K_1)$ such that $E_{b-2}\leq E_b'$, and  $J_1\cdots J_{b-2}K_1$ is not standard.	
			
			\item If $i_2=i_1-i_0+1$, the terms with $k<n_b$ in Equation (\ref{eqn:relationterms2+1}) vanish unless $i-s\leq  n_b-k$. In this case the lower $\bpartial$-lists of the terms are
			$$K_1=  \bpartial^k(\underline{u'_{h_b},\dots, u'_{i_2},u'_{\sigma(i_2-1)}, \dots, u'_{\sigma(s+1)}},u'_{\sigma(s)}, \dots, u'_{\sigma(1)}|
			v^b_{1},v^b_{2} ,\dots , v^b_{h_b})$$
			Underlined elements can be any underlined elements in Equation (\ref{eqn:relationterms2+1}).
			Let $$K'_1=  \bpartial^k(\underline{u'_{h_b},\dots, u'_{i_2},u'_{\sigma(i_2-1)}, \dots, u'_{\sigma(s+1)}},u'_{\sigma(s)}, \dots, u'_{\sigma(1)}| $$
		Let
			$$K'_0=\bpartial^{m_a-1}(u'_{l_{h_b}},\dots, u'_{l_{i_2}},u'_{i_2-1}, \dots, u'_{1}|.
		$$
			Here $i_2\leq l_{i_2}< l_{j_2+1}<\cdots< l_{h_b}\leq h_a$. By Lemma \ref{lemma:replace}, there is $K'_0$ with
		\begin{equation}\label{eqn:LK1-5}
		L(E_{b-2},K'_1)\geq R(E_{b-2},K'_0)-(i_2-1-s)-(i-i_1)
		\end{equation}
	 since in $K'_1$ the number of $u^b_i$ with $u^b_i>v'_{i_2}$ is at most $i-i_1$. By  Corollary \ref{cor:lrnumber1} and \ref{cor:lrnumber2},
		\begin{equation}\label{eqn:LK1-5+}
		R(E_{b-2},K_0')\geq R(E_{b-2},||E_a(h_b)||)=m_a-m_{b-2}.
		\end{equation}
		 So by Equations \eqref{eqn:LK1-5},\eqref{eqn:LK1-5+}, and \eqref{eqn:numberi0},
		$$L(E_{b-2},K'_1)\geq m_a-m_{b-2}+i_0-(i-s)>k-m_{b-2}.$$
			By Lemma \ref{lemma:critgreat},  $K'_1$ is not greater than $E_{b-2}$.  So there is no $E_b'\in \cE(K_1)$ such that $E_{b-2}\leq E_b'$, and $J_1\cdots J_{b-2}K_1$ is not standard.
		\end{enumerate}
	\end{enumerate}
	\textsl{Case 2:}  $J_i\in\cJ_h^R$ for $1\leq i\leq a$, the proof is similar to the case of $J_a\in \cJ_h^L$.\\
	\\
	\textsl{Case 3:} $J_1,\dots, J_c\in \cJ_h^L$ and $J_{c+1},\dots,J_a\in \cJ_h^R$, $1\leq c<a$.
	
	Let $E_1\cdots E_{a}$ be the standard ordered product of elements of $\cE_h^+$ corresponding to $J_1\cdots J_{a}$.
	Assume $E_i=((u^i_h,k^i_h),\dots, (u^i_1,k^i_1)|$, for $1\leq i\leq c$ and $E_i=|(v^i_1,k^i_1),\dots, (v^i_h,k^i_h))$, for $c< i\leq a$.
	Let $m_i=wt(E_i(h_b))$ and $n_i=wt(E_i)$. Let $\sigma_i$ be a permutation of ${1,2,\dots, h_b}$
	such that $u_{\sigma_i(j)}<u_{\sigma_i(j+1)}$ or $v_{\sigma_i(j)}<v_{\sigma_i(j+1)}$ for $1\leq j\leq h_{b-1}$. Let
	$v'_{h_a},\dots,v'_1$ be a permutation of $v^a_{h},\dots,v^a_1$ such that $v'_{i}\leq v'_{i+1}$. Let $m=n_a+n_b$.
 Let $k_0=m_c+L(E_c,J_b(L))$ and $i_0=R(E_a,J_b(R))$.
	Then by Lemma \ref{lemma:critgreat},
	\begin{equation}\label{eqn:numberi0-1}
	 i_0>n_b-k_0-m_a.
	\end{equation}
By assumption, $J_1\cdots J_{b-2}J_b$ is standard. By Lemma \ref{lemma:critgreat}, $k_0$ is the smallest number such that there is $E\in\cE(J_b)$
	with $E_c\leq E$. Then $k_0\leq n_b$. Now the proof is similar to \textsl{Case 1}.
	
	\begin{enumerate}
		\item If $n_b-k_0<m_a$, then $m_a>0$ and $c<b-2$.
		By Equation \eqref{eqn:relation2+}, \begin{equation}0=\sum_{0\leq i<h_b}\sum_{\sigma}\frac {(-1)^{i}\sign(\sigma)}{i!(h_b-i)!}\sum_{k=0}^{m} \epsilon a^{i}_{k}
		\label{eqn:relationterm61} \left(
		\begin{array}{ccc}
		\bpartial^{m-k}&|&v^b_{\sigma(1)},v_{\sigma(2)}^b,\dots,v_{\sigma(i)}',\underline{v_{i+1}',\dots,v'_{h}})\\
		\bpartial^k(u^b_{h_b},\dots,u^b_{1}&|&\underline{ v'_{1},\dots,v'_{i},v^b_{\sigma(i+1)},\dots,v^b_{\sigma(h_b)}})
		\end{array}
		\right)
		\end{equation}
		Here  $a_k^{i}$ are integers and $a_{n_b-l}^{i}=\delta_{0,l}$ for $0 \leq l<h_b -i$, and $\sigma$ are permutations of $\{1,\dots,h_b\}$.
		In the above equations:\\
		\begin{enumerate}
			
			\item
			All the terms with $k=n_b+1,\dots, n$ precede $J_a$, since the weight of the upper $\bpartial$-list is $m-k<n_a$.
			\item
			The terms with $k=n_b$ are $J_aJ_b$
			and the terms in Equation (\ref{eqn:relationterm61}) with the lower $\bpartial$-lists
			$$K_0=\bpartial^{n_b}(u_{h_b}^b,\dots,u_{2}^b,u^b_{1}|v'_{i_1},v'_{i_2},\dots,v'_{i_{h_b}}).$$
			All of the other terms cancel.

			By Lemma \ref{lemma:critgreat}, if there is $E\in \cE(K_0)$ such that $E_{c}\leq E$ and $E_{b-2}\leq E$, we must have $wt(E(L))\geq k_0$ and 
			$$wt(E(R))-wt(E_{b-2}(h_b))\geq L(E_{b-2},||E(R)||).$$
			By Corollary \ref{cor:lrnumber2} and Corollary \ref{cor:lrnumber1},
			\begin{equation}\label{eqn:456}
			L(E_{b-2},||E(R))||\geq L(E_{b-2},||E_a(h_b)||)=m_a-m_{b-2}.
			\end{equation}
		 Then 
			$$n_b=wt(E(L))+wt(E(R))\geq k_0+m_a>n_b.$$ This is a contradiction, so $J_1\cdots J_{a}K_0$ is not standard.
			\item
			The terms with $k<n_b$ in Equation \eqref{eqn:relationterm61} vanish unless $h_b-i\leq  n_b-k$. In this case the lower $\bpartial$-lists of the terms are
			$$K_1= \bpartial^k(u^b_{h_b},\dots,u^b_{1} | v'_{s_1},\dots,v'_{s_i},\underline{v^b_{\sigma(i+1)},\dots,v^b_{\sigma(h_b)}}).$$
			By Lemma \ref{lemma:critgreat}, if there is  $E\in \cE(K_1)$, such that $E_{a}\leq E$ and $E_c\leq E$, we have 
			$wt(E(L))\geq k_0$ and 
			$$wt(E(R))-wt(E_{b-2})\geq L(E_{b-2},||E(R)||)).$$ By Lemma \ref{lemma:replace} and Equation (\ref{eqn:456}),
			$$
			 L(E_{b-2},||E(R)||))\geq L(E_{b-2},\bpartial^k|v'_{s_1},\dots,v'_{s_h}))-(h_b-i)\geq m_a-m_{b-2}-(h_b-i).$$
			Then $k=wt(E'(L)+wt(E'(R))\geq k_0+m_a-(h_b-i)>k$. This is a contradiction, so $J_1\cdots J_{b-2}K_1$ is not standard.
		\end{enumerate}
	
		\item
		If $n_b-k_0\geq m_a$, then $i_0>n_b-k_0-m_a\geq 0$. There is $i_0\leq i_1\leq h_b$, such that $v^b_{i_1}<v^a_{\sigma_a(i_1-i_0+1)}$.

		Assume $v'_{i_2}=v^a_{\sigma_a(i_1-i_0+1)}$, then $i_2\geq i_1-i_0+1$.
		By Equation (\ref{eqn:relation2+},
		$$0=\sum_{\substack{i_1>s\\ i_2-1\geq s}}\sum_{\sigma}\frac {(-1)^{s}\sign(\sigma)}{s!(i_2-1-s)!}\sum_{k=0}^{n}\epsilon a^{s}_{k} $$
		\begin{equation}\label{eqn:relationterms3+0}
		\left( \begin{array}{ccc}
		\bpartial^{m-k}&|&\underline{v^b_{1} ,\dots,v^b_{s} ,v'_{\sigma(s+1)},\dots, v'_{\sigma(i_2-1)},v'_{i_2},\dots,v'_{h}}\\
		\bpartial^k(u^b_{h_b},\dots, u^b_{1}&|&v'_{\sigma(1)},\dots,v'_{\sigma(s)},\underline{v^b_{s+1},\dots,v^b_{i_1}},v^b_{i_1+1},\dots,v^b_{h_b}  )
		\end{array}\right).
		\end{equation}
		Here  $a_k^{s}$ are integers and $a_{n_b-l}^{s}=\delta_{0,l}$ for $0 \leq l<i_1-s$, and
		$\sigma$ are permutations of $\{1,\dots,i_2-1\}$.
		
		$$0=\sum_{\substack{i\geq i_1> s\\ i_2-1\geq s}}\sum_{\sigma, \sigma'}\frac {(-1)^{i+s}\sign(\sigma')\sign(\sigma)}{(i-i_1)!(h_b-i)!s!(i_2-1-s)!}\sum_{k=0}^{n}\epsilon a^{i,s}_{k} $$
		
		\begin{equation}\label{eqn:relationterms3+1}
		\left( \begin{array}{ccc}
		\bpartial^{n-k}
		&|&\underline{v'_{h_b+1},\dots,v'_{h_a},v^b_{1},\dots,v^b_{i_1},v^b_{\sigma'(i_1+1)},\dots ,v^b_{\sigma'(i)}},\dots,v^b_{\sigma'(h_b)}) \\
		\bpartial^k( u^b_{h_b}, \dots, u^b_{1}&|&
		v'_{\sigma(1)}, \dots,v'_{\sigma(s)},\underline{ v'_{\sigma(s+1)}, \dots ,v'_{\sigma(i_2-1)}, v'_{i_2},\dots,v'_{h_b}} )
		\end{array}\right).
		\end{equation}
	Here  $a_k^{i,s}$ are integers, $a_{n_b-l}^{i,s}=\delta_{0,l}$ for $0 \leq l<i-s$, 
		$\sigma$ are permutations of $\{1,\dots,i_2-1\},$ and $\sigma'$ are permutations of $\{i_1+1,i_1+2,\dots, h_b\}$.

		If $i_2> i_1-i_0+1$, we use Equation (\ref{eqn:relationterms3+0}) and if $i_2=i_1-i_0+1$ we use Equation (\ref{eqn:relationterms3+1}).

		In the above equations:
		\begin{enumerate}
			\item
			All the terms with $k=n_b+1,\dots, n$ precede $J_a$ since the weight of upper $\bpartial$-list is $n-k<n_a$.
			\item
			The terms with $k=n_b$ are $J_aJ_b$, the terms with upper $\bpartial$-list preceding $J_a$ (the upper $\bpartial$-lists are the $\bpartial$-lists given
			by replacing some $v'_{i}$, $i\geq i_2$ in $J_a$ by some $v^b_{k}$, $k\leq i_1$),
			and the terms with the lower $\bpartial$-lists
			$$K_0=\bpartial^k(u^b_{h_b},\dots,u^b_{1}|v'_{\sigma(1)},\dots,v'_{\sigma(i_1)},v^b_{i_1+1},\dots,v^b_{h_b}).$$
			All of the other terms cancel.
			If $i_2-1<i_1$, the terms of the form $K_0$ do not appear in Equations \eqref{eqn:relationterms3+0} and \eqref{eqn:relationterms3+1}.
			Otherwise  $i_2-1\geq i_1$. In this case $i_2\geq i_1+1>i_1-i_0+1$, $E_{b-2}$ must be in $\cE_h^R$.
			
	If $E\in \cE(K_0)$, such that $E_{b-2}\leq E$ and $E_c\leq E$.
	Then $wt(E(L))\geq k_0$ and $wt(E(R))-wt(E_{b-2}(h_b))\geq R(E_{b-2}, ||E(R)||)$ by Lemma \ref{lemma:critgreat}. By Lemma \ref{lem:lrnumber}
	$$R(E_{b-2}, E(R))>R(E_{b-2}, ||E_a(h_b||))-i_1-(i_1-i_0+1).$$
	By Corollary \ref{cor:lrnumber1}, 
	\begin{equation}\label{eqn:567}
	R(E_{b-2}, ||E_a(h_b||))=m_a-m_{b-2}.
	\end{equation}
	Then $n_b=wt(E(L))+wt(E(R))\geq k_0+m_a+i_0>n_b$. This is a contradiction, so $J_1\cdots J_{b-2}K_0$ is not standard.
			\item If $i_2>i_1-i_0+1$,
			the terms with $k<n_b$  in  Equation (\ref{eqn:relationterms3+0}) vanish unless $i_1-s\leq  n_b-k$. In this case the lower $\bpartial$-lists of the terms are
			$$K_1=  \bpartial^k(u^b_{h_b},\dots, u^b_{1}|v'_{\sigma(1)},\dots,v'_{\sigma(s)},\underline{v^b_{s+1},\dots,v^b_{i_1}},v^b_{i_1+1},\dots,v^b_{h_b})$$
			Underlined elements can be any underlined elements in Equation (\ref{eqn:relationterms3+0}).
			
				If $E\in \cE(K_1)$, such that $E_{b-2}\leq E$ and $E_c\leq E$.
			Then $wt(E_b(L))\geq k_0$ and $wt(E(R))-wt(E_{b-2})\geq R(E_{b-2}, E(R))$ by Lemma \ref{lemma:critgreat}. By Lemma \ref{lem:lrnumber}
			$$R(E_{b-2}, E(R))>R(E_{b-2}, ||E_a(h_b||))+s-(i_1-i_0+1).$$
			By Equation (\ref{eqn:567}) and (\ref{eqn:numberi0-1}), then
            $$k=wt(E(L))+wt(E(R))\geq k_0+m_a+i_0+s-i_1>k.$$ This is a contradiction, so $J_1\cdots J_{b-2}K_1$ is not standard.
			\item If $i_2=i_1-i_0+1$, the terms with $k<n_b$ in Equation (\ref{eqn:relationterms3+1}) vanish unless $i-s\leq  n_b-k$. In this case the lower $\bpartial$-lists of the terms are
			$$K_1=    \bpartial^k( u^b_{h_b}, \dots u^b_{1}|
			v'_{\sigma(1)}, \dots,v'_{\sigma(s)},\underline{ v'_{\sigma(s+1)}, \dots ,v'_{\sigma(i_2-1)}, v'_{i_2},\dots,v'_{h_b}} )$$
			Underlined elements can be any underlined elements in Equation (\ref{eqn:relationterms3+1}). If there is $E\in \cE(K_1)$ such that 
			$E_c\leq E$ and $E_{b-2}\leq E$.
	Let
			$$K'_0=\bpartial^{k}|
			v'_{1},\dots, v'_{i_2-1},v'_{l_{i_2}}, \dots, v'_{l_{h_b}}).$$
			Here $i_2\leq l_{i_2}< l_{j_2+1}<\cdots< l_{h_b}\leq h_a$. We have
			$wt(E(L))\geq k_0$ and 
		$$R(E_{b-2},E(R))\geq R(E_{b-2},K'_0)-(i_2-1-s)-(i-i_1)$$
		for some $K_0'$ by Lemma \ref{lemma:replace} since in $K'_1$ the number of $v^b_i$ with $v^b_i>v'_{i_2}$ is at most $i-i_1$. By  Corollary \ref{cor:lrnumber1} and \ref{cor:lrnumber2},
		$$R(E_{b-2},K_0')\geq R(E_{b-2},||E_a(h_b)||)=m_a-m_{b-2}.$$ So by Lemma \ref{lemma:critgreat},
		$$wt(E(R))-wt(E_{b-2})\geq R(E_{b-2},E(R))\geq m_a-m_{b-2}+i_0-(i-s).$$
		$$k=wt(E(L))+wt(E(R))\geq k_0+m_a+i_0-(i-s)>k.$$ This is a contradiction, so $J_1\cdots J_{b-2}K_1$ is not standard.	
	\end{enumerate}
		
	\end{enumerate}
	\textsl{Case 4:}$J_a\notin J_h^L\cup J_h^R$. Let $E_1\cdots E_{a}$ be the standard ordered product of elements of $\cE_h^+$ corresponding to $J_1\cdots J_{a}$.
	Assume $J_i\in \cJ^L_h$ for $1\leq i\leq c$, $J_i\in \cJ^R_h$ for $c< i\leq d$ and $J_i\in\cJ_{h-1}$ for $d<i\leq a$.
	\begin{enumerate}
		\item if $d<a-1$, let $E=E_{a-1}$;
		\item if $d=a-1$ and $d>c>0$, let $E=F(E_c, E_d)$;
		\item if $d=a-1=c$, let $E=F(E_c,E_0)$ with $E_0=|(1,0),(2,0),\dots,(h,0))$;
		\item if $d=a-1$and $d>c=0$, Let $E=F(E_0,E_d)$ such that $E_0=((h,0),\dots,(2,0),(1,0)|$.
	\end{enumerate}
	Then by Remark \ref{rem:LRcompare}, $E_a$ is the largest element in $\cE(J_a)$ such that $E\leq E_a$.
	Since $J_1\cdots J_b$ is not standard, than $J_b$ is not greater than $E_a$, by Lemma \ref{lemma:straight0},  $J_aJ_b=\sum \tilde K_i \tilde f_i$ with $\tilde K_i\in \cJ$, $\tilde f_i\in \R$ such that $\tilde K_i$ is either smaller than $J_a$ or $\tilde K_i$ is not greater than $E$. If $sz(\tilde K_i)<h$, $\tilde K_i\in \cJ_{h-1}^+$. If $sz(\tilde K_i)\geq h$, $\tilde K_i=\sum \tilde K_{ij}\tilde g_j$ with $\tilde K_{ij}\in \cJ$ and $sz(\tilde K_{ij})=h$. By Equation (\ref{eqn:relationLR+}),  $\tilde K_{ij}=\sum J_k^L J_k^R$ with $J_k^L\in \cJ_h^L$ and $J_k^R\in \cJ_h^R$. So $J_aJ_b=\sum K_{i}f_i$ with $K_{ij}\in J_h^+$, such that $\ K_i$ is either smaller than $J_a$ or $ K_i$ is not greater than $E$.
\end{proof}

\end{document}